\documentclass[a4paper,10pt]{amsart}

\usepackage{amsfonts,amssymb,amscd,amsmath,latexsym,amsbsy,enumerate,stmaryrd,a4wide,verbatim,color}
\usepackage{hyperref}

\theoremstyle{plain}
\newtheorem{theorem}{Theorem}[section]

\newtheorem{lemma}[theorem]{Lemma}
\newtheorem{proposition}[theorem]{Proposition}
\newtheorem{Definition}[theorem]{Definition}
\theoremstyle{remark}
\newtheorem{remark}[theorem]{Remark}
\numberwithin{equation}{section}

\newcommand{\R}{\mathbb R}
\newcommand{\N}{\mathbb N}
\newcommand{\C}{\mathbb C}

\newcommand{\T}{\mathbb T}

\newcommand{\al}{\alpha}
\newcommand{\be}{\beta}
\newcommand{\ga}{\gamma}

\newcommand{\de}{\delta}
\newcommand{\De}{\Delta}

\newcommand{\la}{\lambda}
\newcommand{\La}{\Lambda}

\newcommand{\om}{\omega}

\newcommand{\ba}{\mathbf a}

\newcommand{\bk}{\mathbf k}
\newcommand{\bm}{\mathbf m}
\newcommand{\bn}{\mathbf n}
\newcommand{\bx}{\mathbf x}
\newcommand{\by}{\mathbf y}

\newcommand{\bal}{\boldsymbol{\alpha}}
\newcommand{\bbe}{\boldsymbol{\beta}}
\newcommand{\bnu}{\boldsymbol{\nu}}

\newcommand{\sfY}{\mathsf{Y}}
\newcommand{\sfK}{\mathsf{K}}

\newcommand{\tensor}{\otimes}

\newcommand{\rphis}[5]{\,_{#1}\varphi_{#2} \left( \genfrac{.}{.}{0pt}{}{#3}{#4}
\ ;#5 \right)}
\newcommand{\hf}{\frac{1}{2}}
\newcommand{\su}{\mathfrak{su}}
\newcommand{\U}{\mathcal U}

\newcommand{\mdot}{\,\cdot\,}
\newcommand{\mvert}{\mkern 2mu | \mkern 2mu}

\begin{document}
\title{A quantum algebra approach to multivariate Askey-Wilson polynomials}
\author{Wolter Groenevelt}
\date{\today}
\maketitle

\begin{abstract}
We study matrix elements of a change of base between two different bases of representations of the quantum algebra $\U_q(\su(1,1))$. The two bases, which are multivariate versions of Al-Salam--Chihara polynomials, are eigenfunctions of iterated coproducts of twisted primitive elements. The matrix elements are identified with Gasper and Rahman's multivariate Askey-Wilson polynomials, and from this interpretation we derive their orthogonality relations. Furthermore, the matrix elements are shown to be eigenfunctions of the twisted primitive elements after a change of representation, which gives a quantum algebraic derivation of the fact that the multivariate Askey-Wilson polynomials are solutions of a multivariate bispectral $q$-difference problem.
\end{abstract}

\section{Introduction}

In this paper we give an interpretation of Gasper and Rahman's multivariate Askey-Wilson polynomials \cite{GR05} in representation theory of the quantum algebra $\U_q(\su(1,1))$, and we obtain from this interpretation their main properties: orthogonality relations and difference equations.

The univariate Askey-Wilson polynomials \cite{AW} are orthogonal polynomials depending on four parameters $a,b,c,d$ and on a parameter $q$. They are given explicitly by
\begin{equation} \label{eq:AW pol}
p_n(x;a,b,c,d \mvert q) = \frac{(ab,ac,ad;q)_n }{a^n} \rphis{4}{3}{q^{-n},abcdq^{n-1},ax,a/x}{ab,ac,ad}{q,q},
\end{equation}
where we use standard notation for $q$-shifted factorials and $q$-hypergeometric functions as in \cite{GR}. From the explicit expression \eqref{eq:AW pol} one sees that $p_n(x)$ is a polynomial in $x+x^{-1}$ of degree $n$. The Askey-Wilson polynomials and their discrete counterparts, the $q$-Racah polynomials (which are essentially also Askey-Wilson polynomials), are on top of the Askey-scheme, see \cite{KLS}, a large scheme consisting of families of orthogonal polynomials of ($q$-)hypergeometric type which are related by limit transitions.

The Askey-Wilson polynomials turned out to be fundamental objects in the representation theory of quantum groups and algebras. Koornwinder \cite{K1} gave an interpretation of a two-parameter family of the Askey-Wilson polynomials as zonal spherical functions on the quantum group $\mathrm{SU}_q(2)$. Fundamental in this approach is the introduction of twisted primitive elements, which are elements in the quantum algebra $\U_q(\mathfrak{sl}(2,\C))$ that are much like Lie algebra elements. Similar interpretations for the full four-parameter family of Askey-Wilson polynomials were obtained in e.g.~\cite{Koe},\cite{NM}. A different interpretation is obtained by Rosengren \cite{R}, who introduces a generalized group element (a rediscovery of Babelon's \cite{B} `shifted boundary') that transforms Koornwinder's twisted primitive elements into group-like elements. The Askey-Wilson polynomials appear as `matrix elements' of the generalized group element with respect to continuous and discrete bases in a discrete series representations of the quantum algebra $\U_q(\su(1,1))$. Other interpretations of the Askey-Wilson polynomials, as $3j$ and $6j$-symbols, can be found in e.g.~\cite{KvdJ}, \cite{BR}, \cite{Gr}.

Gasper and Rahman introduced in \cite{GR05} multivariate extensions of the Askey-Wilson polynomials. These polynomials can be considered as $q$-analogues of Tratnik's multivariate Wilson polynomials \cite{Tr}. It should be remarked that the Gasper and Rahman multivariate Askey-Wilson polynomials are different from the Macdonald-Koornwinder polynomials \cite{K}, which are multivariate extensions of Askey-Wilson polynomials as well as extensions of Macdonald polynomials \cite{M} associated to classical root systems.
The Gasper and Rahman multivariate Askey-Wilson polynomials in $d$ variables $x_1+x_1^{-1},\ldots,x_d+x_d^{-1}$ depend, besides $q$, on $d+3$ parameters $\al_0,\ldots,\al_{d+2}$. They can be defined as a nested product of univariate Askey-Wilson polynomials by
\begin{equation} \label{eq:d-var AWpol}
P_d(\bm;\bx;\bal\mvert  q) = \prod_{j=1}^d p_{m_j}\left(x_j;\al_{j} q^{M_{j-1}}, \frac{ \al_j}{\al_0^2} q^{M_{j-1}}, \frac{\al_{j+1}}{\al_j} x_{j+1}, \frac{\al_{j+1}}{\al_j} x_{j+1}^{-1}\mvert q\right),
\end{equation}
where $\bm=(m_1,\ldots,m_d)$, $M_j=\sum_{k=1}^j m_k$, $M_0=0$, $\bal=(\al_0,\ldots, \al_{d+2}) \in \C^{d+3}$, $x_{d+1}=\al_{d+2}$. Under appropriate conditions on the parameters these polynomials are orthogonal on the torus $\T^d$, where $\T$ is the unit circle in the complex plane, with respect to the weight function
\begin{equation} \label{eq:weight multivariate AW}
\frac{1}{(\al_1 x_1^{\pm 1}/\al_0^2, \al_1 x_1^{\pm 1};q)_\infty}\prod_{j=1}^d \frac{ (x_j^{\pm 2};q)_\infty }{ ( \al_{j+1} x_{j+1}^{\pm 1} x_j^{\pm 1}/\al_j;q)_\infty}.
\end{equation}
Here the $\pm$ symbols in the argument of the $q$-shifted factorials means that we take a product over all possible combinations of $+$ and $-$ signs, e.g.~
\[
(ab^{\pm 1} c^{\pm 1};q)_\infty = (abc,ab/c,ac/b,a/bc;q)_\infty.
\]
We will recover the orthogonality relations with respect to \eqref{eq:weight multivariate AW} below.

Iliev \cite{Il} (see also \cite{GI}) showed that the multivariate Askey-Wilson polynomials are eigenfunctions of $d$ commuting difference operators. Furthermore, the multivariate Askey-Wilson polynomials are also eigenfunctions of commuting difference equations in $\bm$, i.e.~they satisfy $d$ independent recurrence relations. In other words, they solve a multivariate bispectral problem in the sense of Duistermaat and Gr\"unbaum \cite{DG}. Below we construct the commuting difference operators in a quantum algebra setting.

Just like the univariate Askey-Wilson polynomials, the multivariate Askey-Wilson polynomials have many families of multivariate orthogonal polynomials and functions as limit cases \cite{GR05}, \cite{GR07}, some of which have found natural interpretations and applications in representation theory of quantum algebras and related physical models:  $2$-variable $q$-Krawtchouk were obtained by Genest, Post and Vinet as matrix elements of $q$-rotations, and they obtained fundamental properties such as orthogonality and difference equations from this interpretation. Rosengren \cite{R01} obtained orthogonality for multivariate $q$-Hahn polynomials from their interpretation as nested Clebsch-Gordan coefficients. Genest, Iliev and Vinet \cite{GIV} obtained from a similar interpretation a difference equation for such $q$-Hahn polynomials, showing that they are wavefunctions for a $q$-deformed quantum Calogero-Gaudin superintegrable systems. Related to this they showed that the multivariate $q$-Racah polynomials appear as $3nj$-coefficients, leading to their orthogonality relation and the duality property. A similar interpretation was also obtained for multivariate $q$-Bessel functions in \cite{Gr}. The multivariate Askey-Wilson polynomials themselves also appear in representation theory: in \cite{KvdJ} Koelink and Van der Jeugt obtained an interpretation of 2-variable Askey-Wilson polynomials as nested Clebsch-Gordan coefficients, and Baseilhac and Martin \cite{BM} constructed infinite dimensional representations of the $q$-Onsager algebra using multivariate Askey-Wilson polynomials and Iliev's corresponding difference operators.

In this paper we extend Rosengren's interpretation of the Askey-Wilson polynomials to a multivariate setting, and in this way we derive the orthogonality relations and the $q$-difference equations for the multivariate Askey-Wilson polynomials. The main ingredients we use are discrete series representations of $\U_q(\su(1,1))$, twisted-primitive elements and properties of univariate Al-Salam--Chihara polynomials. The paper is organized as follows. In Section \ref{sec:Uq} we recall the aspects of representation theory of $\U_q(\su(1,1))$ that we need in this paper; in particular, we give a representation $\pi$ in terms of $q$-difference operators. In Section \ref{sect:Al-Salam--Chihara} we study eigenfunctions of two twisted primitive elements. The eigenfunctions are given in terms of Al-Salam--Chihara polynomials in base $q^2$ and $q^{-2}$. Using properties of these polynomials we introduce two new representations $\rho$ and $\widetilde \rho$, which are equivalent to the representation $\pi$. In Section \ref{sect:AW polynomials} we study the matrix elements for a change of base between two different eigenbases of twisted primitive elements. These matrix elements are essentially (univariate) Askey-Wilson polynomials. We show how the fundamental properties of these polynomials are obtained from this interpretation; the orthogonality relations follow essentially directly from their definition as matrix elements, and the difference equations are shown to correspond to actions of twisted primitive elements in the representations $\rho$ and $\widetilde \rho$. In Sections \ref{sect:multivariate ASC pol} and \ref{sect:multivariate AW pol} we extend the results in the univariate case to the multivariate setting using $N$-fold tensor product representations. In this way we obtain multivariate Askey-Wilson polynomials and their properties from representation theory of the quantum algebra $\U_q(\su(1,1))$. The appendix contains some results on asymptotic behavior of functions we use in this paper, as well as an overview of the various Hilbert spaces appearing in this paper.

\subsection{Notations and conventions}
We assume $0<q<1$, unless explicitly stated otherwise. We denote by $\N$ the set of nonnegative integers, and $\T$ is the unit circle in the complex plane.  For a set $S$, we write $F(S)$ for the vector space consisting of complex valued functions on $S$. If $S$ is countable, we denote by $F_0(S)$ the functions with finite support. By $\mathcal P$ we denote the set of Laurent polynomials in $x_1,\ldots,x_N$ that are invariant under $x_j \leftrightarrow x_j^{-1}$, or equivalently the set of polynomials in $x_j+x_j^{-1}$, $j=1,\ldots,N$ (the number of variables should be clear from the context).

\subsection*{Acknowledgements}
I thank Fokko van de Bult for comments on early versions of this paper.

\section{The quantum algebra $\U_q$} \label{sec:Uq}
The quantum algebra $\mathcal U_q = \U_q\big(\su(1,1)\big)$ is the unital, associative, complex algebra generated by $K$, $K^{-1}$, $E$, and $F$, subject to the relations
\begin{gather*}
K K^{-1} = 1 = K^{-1}K,\\ KE = qEK, \quad KF= q^{-1}FK,\\ EF-FE =\frac{K^2-K^{-2}}{q-q^{-1}}.
\end{gather*}
$\mathcal U_q$ has a $*$-structure $*:\U_q \to \U_q$ and a comultiplication $\De:\mathcal U_q \to \mathcal U_q \tensor \mathcal U_q$ defined
on the generators by
\[
K^*=K, \quad E^*=-F, \quad F^* = -E, \quad (K^{-1})^* = K^{-1},
\]
\begin{equation} \label{eq:comult}
\begin{aligned}
\De(K) &= K \tensor K,& \De(E)&= K \tensor E + E \tensor K^{-1}, \\
\De(K^{-1}) &= K^{-1} \tensor K^{-1},&  \De(F) &= K \tensor F + F \tensor K^{-1}.
\end{aligned}
\end{equation}

\subsection{Twisted primitive elements}
The following two elements of $\mathcal U_q$ play an important role in this paper.
For $s,u \in \C^\times$ the twisted primitive elements $Y_{s,u}$ and $\widetilde Y_{s,u}$ are given by
\begin{equation} \label{eq:def Y tildeY}
\begin{split}
Y_{s,u} &= uq^\hf  EK - u^{-1}q^{-\hf} FK + \mu_s(K^2-1),\\
\widetilde Y_{s,u} &= uq^{-\hf} EK^{-1} - u^{-1}q^\hf FK^{-1} - \mu_s(K^{-2}-1),
\end{split}
\end{equation}
where
\[
\mu_s = \frac{ s + s^{-1}}{q^{-1}-q}.
\]
In particular, we define $Y_s=Y_{s,1}$ and $\widetilde Y_s = \widetilde Y_{s,1}$.
If we formally write $K=q^H$ and $K^{-1}=q^{-H}$, $\widetilde Y_{s,u}$ is obtained from $Y_{s,u}$ by replacing $q$ by $q^{-1}$.
For $s \in \R^\times \cup \T$ and $u \in \T$ both $Y_{s,u}$ and $\widetilde Y_{s,u}$ are self-adjoint in $\U_q$. From \eqref{eq:comult} we find
\begin{equation} \label{eq:comult X}
\begin{split}
\De(Y_{s,u}) &= K^2 \tensor Y_{s,u} + Y_{s,u} \tensor 1, \\
\De(\widetilde Y_{s,u}) &= \widetilde Y_{s,u} \tensor K^{-2} +1 \tensor \widetilde Y_{s,u}.
\end{split}
\end{equation}

\subsection{A representation of $\mathcal U_q$}
Let $k>0$, and let $H=H_{k}$ be the Hilbert space consisting of complex-valued functions on $\N$ with inner product
\begin{gather*}
\langle f,g\rangle_{H} = \sum_{n \in \N} f(n) \overline{g(n)}\, \om(n), \\ \om(n) = \om_{k}(n)=q^{n(k-1)} \frac{ (q^2;q^2)_n }{(q^{2k};q^2)_n}.
\end{gather*}
From the identity $(A;q^{-2})_n = (-A)^n q^{-n(n-1)}(A^{-1};q^{2})_n$ it follows that $\om$, hence also the inner product, is invariant under $q \leftrightarrow q^{-1}$. We consider the following representation $\pi=\pi_k$ on $F(\N)$,
\begin{equation} \label{eq:representation pi}
\begin{split}
[\pi(K) f](n) &= q^{k/2+n} f(n) \\
[\pi(K^{-1}) f](n) &= q^{-k/2-n} f(n) \\
[\pi(E) f](n) & = -\frac{q^{k+n-1}- q^{-k-n+1}}{q^{-1}-q} f(n-1) \\
[\pi(F) f](n) & =  \frac{q^{n+1} - q^{-n-1}}{q^{-1}-q} f(n+1),
\end{split}
\end{equation}
with the convention $f(-1)=0$. This defines an unbounded representation on $H$, where we take $F_0(\N)$ as a dense domain. Furthermore, $\pi$ is a $*$-representation on $H$, i.e.~$\langle \pi(X)f,g \rangle_H = \langle f,\pi(X^*)g\rangle_H$ for $f,g \in F_0(\N)$. Let us remark that if $X^*=X$, then $\pi(X)$ is a symmetric operator, but not necessarily self-adjoint.

\section{Eigenfunctions of twisted primitive elements: Al-Salam--Chihara polynomials} \label{sect:Al-Salam--Chihara}
We determine eigenfunction of the  difference operators $\pi(Y_{s})$ and $\pi(\widetilde Y_{t})$, see also Koelink and Van der Jeugt \cite{KvdJ} and Rosengren \cite{R}. In order to assure self-adjointness of the difference operators corresponding to twisted primitive elements we assume from here on that $s,u \in \T$ and $t \in \R$ such that $|t| \geq q^{-1}$.
We need the (univariate) Al-Salam--Chihara polynomials, see \cite[Section 15.1]{I} and \cite[Section 14.8]{KLS} for details. In this section three different Hilbert spaces $H$, $\mathcal H$ and $\widetilde{\mathcal H}$ are used; for the readers convenience we included a short overview of these Hilbert spaces in the appendix.

\subsection{Al-Salam--Chihara polynomials}
For $q>0$, $q \neq 1$, the Al-Salam--Chihara polynomials are Askey-Wilson polynomials (normalized differently from \eqref{def:AW pol}) with two parameters equal to zero given by
\begin{equation} \label{eq:ASC 3phi2}
\begin{split}
Q_n(x;a,b\mvert q) &= \rphis{3}{2}{q^{-n},ax,a/x}{ab,0}{q,q}\\
& = (ax)^n \frac{(b/x;q)_n}{(ab;q)_n} \rphis{2}{1}{q^{-n},ax}{q^{1-n}x/b}{q,\frac{q}{bx}}.
\end{split}
\end{equation}
They  have the symmetry property
\begin{equation} \label{eq:ACS symmetry}
Q_n(x;b,a\mvert q) = \left(\frac{a}{b}\right)^n Q_n(x;a,b\mvert q).
\end{equation}
The three-term recurrence relation is given by
\[
(x+x^{-1}) Q_n(x) = \tfrac{1}{a}(1-abq^{n})Q_{n+1}(x) +(a+b)q^n Q_n(x) + a(1-q^n) Q_{n-1}(x).
\]
If $0<q<1$, $|a|,|b|<1$ and $\overline{a} = b$ the Al-Salam--Chihara polynomials in base $q$ satisfy the orthogonality relations
\begin{equation} \label{eq:orthogonality ASC}
\begin{gathered}
\frac{1}{4\pi i} \int_\T Q_m(x) Q_n(x) w(x;a,b\mvert q) \frac{dx}{x} = \de_{m,n} \frac{ a^{2n} (q;q)_n}{(ab;q)_n }, \\ w(x;a,b\mvert q) = \frac{(q,ab, x^{\pm 2};q)_\infty }{ (ax^{\pm 1},bx^{\pm 1};q)_\infty},
\end{gathered}
\end{equation}
and the polynomials form a basis for the corresponding weighted $L^2$-space of functions in $x+x^{-1}$.

If $0<q<1$, $ab>1$ and $qb<a$ the Al-Salam--Chihara polynomials in base $q^{-1}$ satisfy the orthogonality relations, see \cite{AI},
\begin{equation} \label{eq:orthogonality q^-1 ASC}
\begin{gathered}
\sum_{y \in aq^{-\N}} Q_n(y;a,b\mvert q^{-1}) Q_n(y;a,b\mvert q^{-1}) W(y;a,b;q) = \de_{m,n} \left(\frac{a}{bq}\right)^n \frac{(q;q)_n}{(1/ab;q)_n}, \\
W(y;a,b;q) = \frac{ 1-q^{2m}/a^2}{1-1/a^2} \frac{ (1/a^2,1/ab;q)_m (bq/a;q)_\infty}{(q,bq/a;q)_m (q/a^{2};q)_\infty} \left( \frac{b}{a} \right)^m q^{m^2}, \qquad y=aq^{-m}.
\end{gathered}
\end{equation}
Here $1/ab = \frac1{ab}$.
Under the conditions above, the $q^{-1}$-Al-Salam--Chihara moment problem is determinate, so the polynomials form a basis for the corresponding weighted $L^2$-space consisting of functions in $y+y^{-1}$. From \eqref{eq:ASC 3phi2} it follows that $Q_n(aq^{-m};a,b\mvert q^{-1})$ is a polynomial in $q^n$ of degree $m$, which can be shown to be a multiple of a little $q$-Jacobi polynomial $p_m(q^n;a^{-1}b, q^{-1}a^{-1}b^{-1};q)$ using $q$-hypergeometric transformations: first transform the $_3\varphi_2$-function in base $q^{-1}$ to a $_3\varphi_1$-function in base $q$, and then transform this into a $_2\varphi_1$-function using \cite[(III.8)]{GR};
\[
Q_n(aq^{-m};a,b\mvert q^{-1}) = \left(-\frac{a}{b}\right)^m  q^{-\frac12 m(m+1)} \frac{(qb/a;q)_m}{(1/ab;q)_m} \rphis{2}{1}{q^{-m},q^m/a^2}{qb/a}{q,q^{1+n}}.
\]
In particular, the dual orthogonality relations
\begin{equation} \label{eq:dual orth rel ASC}
\sum_{n \in \N} Q_n(aq^{-m};a,b\mvert q^{-1}) Q_n(aq^{-r};a,b\mvert q^{-1}) \left(\frac{bq}{a}\right)^n \frac{(1/ab;q)_n}{(q;q)_n} = \frac{ \de_{m,r} }{W(aq^{-m};a,b;q)},
\end{equation}
correspond to the orthogonality relations for the little $q$-Jacobi polynomials.

The following $q$-difference equations will be useful later on.
\begin{lemma}
For $q>0$, $q \neq 1$, the Al-Salam--Chihara polynomials satisfy
\begin{equation} \label{eq:diffeq1}
 Q_n(x;a,b | q)=\frac{1-ax}{1-x^2} Q_n(x q^\hf ;aq^\hf,b/q^{\hf} | q) + \frac{1-a/x}{1-1/x^{2}} Q_n(x/q^\hf;aq^\hf,b/q^{\hf} | q).
\end{equation}
As a consequence, the following $q$-difference equations hold:
\begin{equation} \label{eq:diffeq ACS}
\begin{split}
q^{-n} Q_n(x;a,b\mvert q) & =  \frac{ (1-ax)(1-bx)}{(1-x^2)(1-qx^2)} Q_n(x q;a,b\mvert q)+ \frac{ (1-a/x)(1-b/x)}{(1-1/x^{2})(1-q/x^{2})} Q_n(x/q;a,b\mvert q) \\
&\quad + \frac{ (1+q)(q+ab)-(x+1/x)(aq+bq)}{ (1-qx^2)(1-q/x^{2})} Q_n(x;a,b\mvert q)
\end{split}
\end{equation}
and
\begin{equation} \label{eq:diffeq2 ACS}
\begin{split}
Q_n(x;a,b\mvert q) & = \frac{ (1-ax)(1-aqx)}{(1-x^2)(1-qx^2)}Q_n(x q;aq,b/q\mvert q) + \frac{ (1-a/x)(1-aq/x)}{(1-1/x^{2})(1-q/x^{2})}Q_n(x/q;aq,b/q\mvert q) \\
& \quad + \frac{q (q + 1) (1- ax) (1 - a /x)}{(1-q x^2)(1-q/x^{2})}Q_n(x;aq,b/q\mvert q).
\end{split}
\end{equation}
\end{lemma}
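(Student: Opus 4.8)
The plan is to prove \eqref{eq:diffeq1} directly from the $_3\varphi_2$-expansion \eqref{eq:ASC 3phi2}, and then to obtain \eqref{eq:diffeq ACS} and \eqref{eq:diffeq2 ACS} by iterating \eqref{eq:diffeq1}, using the symmetry \eqref{eq:ACS symmetry} once to ``turn it around''.

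For \eqref{eq:diffeq1}, write $Q_n(x;a,b\mvert q)=\sum_{k=0}^n c_k\,(ax;q)_k(a/x;q)_k$ with $c_k=(q^{-n};q)_k\,q^k/\big((ab;q)_k(q;q)_k\big)$. The replacement $a\mapsto aq^\hf$, $b\mapsto b/q^\hf$, $x\mapsto xq^{\pm\hf}$ leaves $ab$, hence $c_k$, unchanged and turns $(ax;q)_k(a/x;q)_k$ into $(aqx;q)_k(a/x;q)_k$ for the $+$-shift and into $(ax;q)_k(aq/x;q)_k$ for the $-$-shift. Using $(1-ax)(aqx;q)_k=(ax;q)_k(1-axq^k)$ and $(1-a/x)(aq/x;q)_k=(a/x;q)_k(1-aq^k/x)$, the right-hand side of \eqref{eq:diffeq1} becomes $\sum_k c_k\,(ax;q)_k(a/x;q)_k\big(\tfrac{1-axq^k}{1-x^2}+\tfrac{1-aq^k/x}{1-1/x^2}\big)$, and clearing denominators in the elementary identity $\tfrac{1-axq^k}{1-x^2}+\tfrac{1-aq^k/x}{1-1/x^2}=1$ collapses this back to $Q_n(x;a,b\mvert q)$. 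Since this is a finite polynomial identity it is valid for all $q\neq0,1$.

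Iterating gives \eqref{eq:diffeq2 ACS}: apply \eqref{eq:diffeq1} once more to each of the two terms $Q_n(xq^{\pm\hf};aq^\hf,b/q^\hf\mvert q)$, now with $aq^\hf\mapsto aq$ and $b/q^\hf\mapsto b/q$. This writes $Q_n(x;a,b\mvert q)$ through $Q_n(xq;aq,b/q\mvert q)$, $Q_n(x;aq,b/q\mvert q)$, $Q_n(x/q;aq,b/q\mvert q)$; the first and third coefficients are immediately $\tfrac{(1-ax)(1-aqx)}{(1-x^2)(1-qx^2)}$ and $\tfrac{(1-a/x)(1-aq/x)}{(1-1/x^2)(1-q/x^2)}$, while the middle one equals $(1-ax)(1-a/x)\big(\tfrac1{(1-x^2)(1-1/(qx^2))}+\tfrac1{(1-1/x^2)(1-x^2/q)}\big)$, whose bracket simplifies by partial fractions to $\tfrac{q(1+q)}{(1-qx^2)(1-q/x^2)}$.

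For \eqref{eq:diffeq ACS} the point is that iterating \eqref{eq:diffeq1} only moves $a$ up and $b$ down, so I would first combine \eqref{eq:diffeq1} with the $a\leftrightarrow b$ symmetry \eqref{eq:ACS symmetry} to produce the companion relation $Q_n(x;a,b\mvert q)=q^{n}\big(\tfrac{1-bx}{1-x^2}Q_n(xq^\hf;a/q^\hf,bq^\hf\mvert q)+\tfrac{1-b/x}{1-1/x^2}Q_n(x/q^\hf;a/q^\hf,bq^\hf\mvert q)\big)$, which shifts $a$ down and $b$ up. Composing \eqref{eq:diffeq1} with this companion (apply \eqref{eq:diffeq1}, then the companion to each of the two terms) restores the parameters to $(a,b)$; moving the resulting $q^{n}$ to the left gives $q^{-n}Q_n(x;a,b\mvert q)$ as a three-point $x$-combination with outer coefficients $\tfrac{(1-ax)(1-bx)}{(1-x^2)(1-qx^2)}$ and $\tfrac{(1-a/x)(1-b/x)}{(1-1/x^2)(1-q/x^2)}$, while the coefficient of $Q_n(x;a,b\mvert q)$ is $\tfrac{(1-ax)(1-b/(qx))}{(1-x^2)(1-1/(qx^2))}+\tfrac{(1-a/x)(1-bx/q)}{(1-1/x^2)(1-x^2/q)}$, which on clearing denominators is identified with $\tfrac{(1+q)(q+ab)-(x+1/x)(aq+bq)}{(1-qx^2)(1-q/x^2)}$. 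The only genuine computations are these two rational-function simplifications; I expect the last one (the middle coefficient of \eqref{eq:diffeq ACS}) to be the most error-prone step, everything else being bookkeeping of the parameter and argument shifts.
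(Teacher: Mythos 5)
Your proof is correct and follows essentially the same route as the paper: \eqref{eq:diffeq1} is verified termwise on the $_3\varphi_2$-expansion via the elementary identity $\tfrac{1-axq^k}{1-x^2}+\tfrac{1-aq^k/x}{1-1/x^2}=1$ (the paper phrases this as the bracketed expression collapsing to $(ax,a/x;q)_j$), and \eqref{eq:diffeq ACS}, \eqref{eq:diffeq2 ACS} are then obtained by composing \eqref{eq:diffeq1} with its $a\leftrightarrow b$-reflected companion, respectively with itself, exactly as in the paper. The factor $q^{n}$ in your companion relation is the right one --- it is what produces the eigenvalue $q^{-n}$ in \eqref{eq:diffeq ACS} --- and corresponds to the symmetry in the form $Q_n(x;b,a\mvert q)=(b/a)^nQ_n(x;a,b\mvert q)$; note that \eqref{eq:ACS symmetry} as printed has this ratio inverted.
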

Identity \eqref{eq:diffeq ACS} is the well-known $q$-difference equation for the Al-Salam--Chihara polynomials.
\begin{proof}
Using the explicit expression for the Al-Salam--Chihara polynomials as $_3\varphi_2$-functions, the right hand side of \eqref{eq:diffeq1} equals
\[
\begin{split}
\frac{1-ax}{1-x^2} & \rphis{3}{2}{q^{-n},aqx,a/x}{ab,0}{q,q} + \frac{1-a/x}{1-1/x^{2}} \rphis{3}{2}{q^{-n},ax,aq/x}{ab,0}{q,q} \\
& = \sum_{j=0}^n \frac{ (q^{-n};q)_j q^j }{(q,ab;q)_j }\left( \frac{ (ax;q)_{j+1} (a/x;q)_j}{1-x^2} +  \frac{ (ax;q)_{j} (a/x;q)_{j+1}}{1-1/x^{2}} \right).
\end{split}
\]
The expression in large brackets is equal to $(ax,ax^{-1};q)_j$, so that \eqref{eq:diffeq1} follows.

The difference equation \eqref{eq:diffeq ACS} follows from applying the symmetry \eqref{eq:ACS symmetry} and then applying \eqref{eq:diffeq1} again on the right hand side of \eqref{eq:diffeq1}. The coefficients of $Q_n(x;a,b)$ can be rewritten to the expression in the lemma. Similarly \eqref{eq:diffeq2 ACS} follows from applying \eqref{eq:diffeq1} to itself.
\end{proof}

\subsection{Eigenfunctions of $Y_{s}$}
From \eqref{eq:def Y tildeY} and \eqref{eq:representation pi} it follows that $\pi(Y_{s})$ acts as a three-term difference operator on $F(\N)$ by
\begin{equation} \label{eq:action of Y}
\begin{split}
(q^{-1}-q)[&\pi(Y_{s})f](n) = q^{-(k-1)/2}(1-q^{2k+2n-2}) f(n-1)\\ &  + (s+s^{-1})(q^{k+2n}-1)f(n) + q^{(k-1)/2} (1-q^{2n+2}) f(n+1).
\end{split}
\end{equation}
Using the three-term recurrence relation for Al-Salam--Chihara polynomials we can find eigenfunctions (in the algebraic sense) of $\pi(Y_s)$. We define
\begin{equation} \label{eq:definition vxs}
v_{x,s}(n)=v_{x,s,k}(n)= \left(\frac{q^{-(3k-1)/2}}{s}\right)^n \frac{(q^{2k};q^2)_n }{ (q^2;q^2)_n }Q_n(x;sq^k, s^{-1}q^k\mvert q^2).
\end{equation}
Then $v_{x,s}$ is an eigenfunction of $\pi(Y_{s})$, i.e.
\begin{equation} \label{eq:eigenfunction Xs}
[\pi(Y_{s}) v_{x,s}](n) = \la_{x,s} v_{x,s}(n), \qquad \la_{x,s} = \frac{x+x^{-1}- s- s^{-1}}{q^{-1}-q}= \mu_x - \mu_s.
\end{equation}
Note that $v_{x,s}(n)$ is real-valued for $x,s \in \R^\times\cup\T$.

We are also interested in eigenfunctions of $Y_{s,u}$. These can easily be obtained from eigenfunctions of $Y_s$. Let $M_u$ be the multiplication operator on $F(\N)$ defined by $[M_u f](n)=u^nf(n)$. Then $\pi(Y_{s,u}) M_u = M_u \pi(Y_s)$, and we obtain the following result.
\begin{lemma} \label{lem:eigenfunction Ysu}
For $u \in \T$,
\[
[\pi(Y_{s,u}) M_u v_{x,s}](n) = \la_{x,s}\,  M_u v_{x,s}(n).
\]
\end{lemma}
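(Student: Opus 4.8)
The plan is to reduce Lemma \ref{lem:eigenfunction Ysu} to the already-established eigenfunction relation \eqref{eq:eigenfunction Xs} for $\pi(Y_s)$ via the intertwining property of the multiplication operator $M_u$. The key algebraic fact to record is the identity $\pi(Y_{s,u}) M_u = M_u \pi(Y_s)$ on $F(\N)$, which is stated just before the lemma; once this is in hand the proof is essentially one line.

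First I would verify the intertwining identity $\pi(Y_{s,u}) M_u = M_u \pi(Y_s)$ directly from the definitions. Comparing $Y_{s,u} = uq^{\hf}EK - u^{-1}q^{-\hf}FK + \mu_s(K^2-1)$ with $Y_s = q^{\hf}EK - q^{-\hf}FK + \mu_s(K^2-1)$ in the representation \eqref{eq:representation pi}, one sees that $\pi(Y_{s,u})$ differs from $\pi(Y_s)$ precisely in that the subdiagonal term (coming from $EK$, which lowers $n$ by one) is multiplied by $u$ and the superdiagonal term (coming from $FK$, which raises $n$ by one) is multiplied by $u^{-1}$, while the diagonal term (from $K^2-1$) is unchanged. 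Since $[M_u f](n) = u^n f(n)$, conjugating the three-term operator \eqref{eq:action of Y} by $M_u$ has exactly the effect of multiplying the $f(n-1)$-coefficient by $u$ and the $f(n+1)$-coefficient by $u^{-1}$, which is precisely $\pi(Y_{s,u})$. Hence $M_u^{-1}\pi(Y_{s,u})M_u = \pi(Y_s)$, i.e.\ $\pi(Y_{s,u})M_u = M_u\pi(Y_s)$.

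Given the intertwiner, the lemma follows immediately: applying $M_u$ to both sides of \eqref{eq:eigenfunction Xs} gives
\[
[\pi(Y_{s,u}) M_u v_{x,s}](n) = [M_u \pi(Y_s) v_{x,s}](n) = \la_{x,s}\, [M_u v_{x,s}](n),
\]
which is the claimed identity. The only mild subtlety is domain/self-adjointness bookkeeping: we work on the dense domain $F_0(\N)$, on which $M_u$ restricts to a bijection (indeed $M_u$ is unitary on $H$ when $u \in \T$, since $|u^n|=1$), so all manipulations are justified on the algebraic level that \eqref{eq:eigenfunction Xs} itself is stated. I do not expect any real obstacle here; the one thing to be careful about is simply getting the placement of $u$ versus $u^{-1}$ correct in the $EK$ and $FK$ terms, which is a matter of matching conventions in \eqref{eq:def Y tildeY} and \eqref{eq:representation pi}.
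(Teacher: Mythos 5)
Your proposal is correct and is exactly the paper's argument: the paper records the intertwining relation $\pi(Y_{s,u})M_u = M_u\pi(Y_s)$ immediately before the lemma and deduces the statement from the eigenfunction equation \eqref{eq:eigenfunction Xs}, just as you do. Your explicit verification of the intertwiner (tracking the $u$ and $u^{-1}$ factors on the sub- and superdiagonal terms of \eqref{eq:action of Y}) is accurate and merely fills in a step the paper leaves to the reader.
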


Let $\mathcal H=\mathcal H_{k,s}$ be the Hilbert space consisting of functions on $\T$ that are $x \leftrightarrow x^{-1}$ invariant almost everywhere, with inner product
\[
\langle f,g\rangle_{\mathcal H} = \frac{1}{4\pi i} \int_\T f(x) \overline{g(x)} \, w(x)\, \frac{dx}{x},
\]
where
\[
w(x)=w_{k,s}(x) = w(x;q^ks, q^k/s \mvert q^2) = \frac{ (q^2,q^{2k},x^{\pm 2};q^2)_\infty}{(q^k s^{\pm 1} x^{\pm 1};q^2)_\infty}.
\]
The set $\{v_{\mdot,s}(n) \mid n \in \N\}$ is an orthogonal basis for $\mathcal H$ with orthogonality relations
\begin{equation} \label{eq:norm v}
\left \langle v_{\mdot,s}(n),v_{\mdot,s}(n') \right\rangle_{\mathcal H} = \frac{\de_{n,n'}}{\om(n)},
\end{equation}
which follows from \eqref{eq:orthogonality ASC}. Note that the squared norm $\om(n)^{-1}$ is independent of $s$.
\begin{proposition} \label{prop:Lambda}
Let $\La=\La_{k,s}:F_0(\N) \to \mathcal P$ be defined by
\[
(\La f)(x) = \langle f, v_{x,s} \rangle_{H},
\]
then $\La$ intertwines $\pi(Y_{s})$ with  multiplication by $\la_{x,s}$. Furthermore, $\La$ extends to unitary operator $H \to \mathcal H$.
\end{proposition}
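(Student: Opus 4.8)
The plan is to derive both assertions from two facts already in hand: that $v_{x,s}$ is a $\pi(Y_{s})$-eigenfunction with eigenvalue $\la_{x,s}$, as in \eqref{eq:eigenfunction Xs}, and that $\{v_{\mdot,s}(n)\mid n\in\N\}$ is an orthogonal basis of $\mathcal H$ with squared norms $\om(n)^{-1}$, as in \eqref{eq:norm v}. First I would check that $\La$ really maps into $\mathcal P$: for $f\in F_0(\N)$ the sum $\langle f,v_{x,s}\rangle_H=\sum_n f(n)\overline{v_{x,s}(n)}\,\om(n)$ is finite, and since $v_{x,s}(n)$ is real for $x,s\in\T$ it equals $\sum_n f(n)\,v_{x,s}(n)\,\om(n)$; by \eqref{eq:definition vxs} each $v_{\mdot,s}(n)$ is a polynomial in $x+x^{-1}$ of degree exactly $n$, so $\La f\in\mathcal P$, and by triangularity $\La$ is even a linear bijection $F_0(\N)\to\mathcal P$. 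Writing $e_m\in F_0(\N)$ for the indicator of $\{m\}$, the same computation gives the identity $\La e_m=\om(m)\,v_{\mdot,s}(m)$, which drives the rest of the argument.

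For the intertwining property: since $s,1\in\T$, the element $Y_{s}=Y_{s,1}$ is self-adjoint in $\U_q$, hence $\pi(Y_{s})$ is a symmetric three-term difference operator on $F_0(\N)$. For $f\in F_0(\N)$ all pairings below are finite sums, and because $\pi(Y_{s})$ is tridiagonal the relation $\langle\pi(Y_{s})f,g\rangle_H=\langle f,\pi(Y_{s})g\rangle_H$ extends from $g\in F_0(\N)$ to arbitrary $g\in F(\N)$ (truncating $g$ far beyond the support of $f$ changes neither side). Applying this with $g=v_{x,s}$ and then using \eqref{eq:eigenfunction Xs} together with the fact that $\la_{x,s}=\mu_x-\mu_s$ is real for $x,s\in\T$,
\[
\bigl(\La\,\pi(Y_{s})f\bigr)(x)=\langle f,\pi(Y_{s})v_{x,s}\rangle_H=\langle f,\la_{x,s}v_{x,s}\rangle_H=\la_{x,s}\,(\La f)(x),
\]
which is the asserted intertwining with multiplication by $\la_{x,s}$ (an operator that visibly preserves $\mathcal P$).

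For unitarity I would note that $\{e_m\mid m\in\N\}$ is an orthogonal basis of $H$ with $\langle e_m,e_n\rangle_H=\de_{m,n}\om(m)$, and compute, using $\La e_m=\om(m)\,v_{\mdot,s}(m)$ and \eqref{eq:norm v},
\[
\langle\La e_m,\La e_n\rangle_{\mathcal H}=\om(m)\om(n)\,\langle v_{\mdot,s}(m),v_{\mdot,s}(n)\rangle_{\mathcal H}=\de_{m,n}\om(m)=\langle e_m,e_n\rangle_H.
\]
Thus $\La$ preserves the inner product on the dense subspace $F_0(\N)\subset H$, so it extends to an isometry $H\to\mathcal H$; this extension has closed range (an isometry of a complete space), and the range contains the orthogonal basis $\{\om(n)v_{\mdot,s}(n)\}$ of $\mathcal H$, so it is all of $\mathcal H$. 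Hence the extension is unitary.

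The write-up is short because the analytic work is front-loaded: the one substantive ingredient is the completeness half of \eqref{eq:norm v}, i.e.\ that the base-$q^2$ Al-Salam--Chihara polynomials with the mutually conjugate parameters $q^k s,q^k/s$ of modulus $q^k<1$ actually span $\mathcal H$ (determinacy of the moment problem). The only place calling for a little care is the passage of $\pi(Y_{s})$ across the pairing $\langle\,\mdot\,,v_{x,s}\rangle_H$, since $v_{x,s}$ need not lie in $H$; as noted, tridiagonality makes this routine.
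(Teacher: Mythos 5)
Your proof is correct and follows essentially the same route as the paper: the intertwining property via self-adjointness of $Y_s$ and the eigenvalue equation \eqref{eq:eigenfunction Xs}, and unitarity by checking that $\La$ carries the standard orthogonal basis of $H$ (you use $e_m$, the paper uses the rescaled $\de_m(n)=\de_{m,n}/\om(n)$) onto the orthogonal basis $\{v_{\mdot,s}(m)\}$ of $\mathcal H$ with matching norms. Your extra care in justifying the passage of the symmetric operator $\pi(Y_s)$ across the pairing with $v_{x,s}\notin H$ via tridiagonality is a point the paper glosses over, and is a welcome addition rather than a deviation.
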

\begin{proof}
The intertwining property follows from \eqref{eq:eigenfunction Xs} and $Y_{s}^* = Y_{s}$; for $f \in F_0(\N)$,
\[
\big( \La (\pi(Y_{s}) f) \big)(x) = \langle \pi(Y_{s}) f, v_{x,s} \rangle_{H} = \langle f,\pi(Y_{s}) v_{x,s} \rangle_{H} = \la_{x,s} (\La f)(x).
\]
Note that $\La f$ is a finite linear combination of Al-Salam--Chihara polynomials, so $\La f \in \mathcal P$.
For the unitarity, define for $m \in \N$ the function $\de_m \in H$ by $\de_m(n) = \frac{\de_{m,n}}{\om(n)}$, then $\{\de_m\mid m\in \N\}$ is an orthogonal basis for $H$ with squared norm $\|\de_m\|_{H}^2 = (\om(m))^{-1}$. Note that $(\La \de_m)(x) = v_{x,s}(m)$, so that $\La$ maps an orthogonal basis of $H$ to an orthogonal basis of $\mathcal H$ with the same norm, from which it follows that $\La$ extends to a unitary operator.
\end{proof}
Note that it follows from Proposition \ref{prop:Lambda} that $\pi(Y_{s})$ has completely continuous spectrum, which is given by
\[
\{ \la_{x,s} \mid x \in \T \}=\left[-\tfrac{2}{q^{-1}-q}-\mu_s, \tfrac{2}{q^{-1}-q}-\mu_s\right].
\]
\begin{remark} \label{remark}\*
\begin{enumerate}[(i)]
\item The function $v_{x,s}(n)$ is a polynomial of degree $n$ in $x+x^{-1}$. Furthermore, from the explicit expression \eqref{eq:ASC 3phi2} for the Al-Salam--Chihara polynomial and from the symmetry property \eqref{eq:ACS symmetry} it follows that $v_{x,s}(n)$ is also a polynomial in $s+s^{-1}$ of degree $n$.
\item If we assume $s \in \R$ such that $|s|>1$, instead of $s \in\T$, the operator $\pi(Y_{s})$ is still self-adjoint, but now finite discrete spectrum will appear if $|sq^k|>1$. For simplicity we assume $s \in \T$ throughout the paper.
\end{enumerate}
\end{remark}

The set of eigenfunctions $\{v_{x,s} \mid x \in \T\}$ is a generalized basis for $H$ and the set $\{ v_{\cdot,s}(n)\mid n \in \N \}$ is a basis for $\mathcal H$. So using the eigenfunctions $v_{x,s}(n)$, or actually the corresponding operator $\La$, we can transfer the action of $\U_q$ on $H$ to an action on $\mathcal H$. We define a representation $\rho=\rho_{k,s}$ of $\mathcal U_q$ on $\mathcal P$ by
\[
\rho(X) = \La \circ \pi(X) \circ \La^{-1}, \qquad X \in \U_q.
\]
This extends to a $*$-representation on $\mathcal H$.
By Proposition \ref{prop:Lambda} we have an explicit expression for $\rho_k(Y_{s})$ as a multiplication operator. In general it seems very difficult to find explicitly the action of $\rho(X)$ for a given $X \in \U_q$, but for $X = K^{-2}$ we can find such an explicit expression using the difference equation \eqref{eq:diffeq ACS}. We use the following notation for an elementary $q$-difference operator: $[\mathcal T f](x) = f(q^2 x)$.
\begin{lemma} \label{lem:rho(K-2)}
$\rho(K^{-2})$ is the second order $q$-difference operator given by
\[
\rho(K^{-2}) = A(x) \mathcal T + B(x) \mathrm{Id} + A(x^{-1}) \mathcal T^{-1},
\]
where
\[
\begin{split}
A(x) &= A_{k}(x;s)=\frac{q^{-k}(1-q^k sx)(1-q^k x/s)}{(1-x^2)(1-q^2x^2)}, \\
B(x) &= B_{k}(x;s) =\frac{q^{2}(q^{-1}+q)(q^{1-k}+q^{k-1})-q^{2}(x+x^{-1})(s+s^{-1})}{ (1-q^2x^2)(1-q^2/x^{2})}.
\end{split}
\]
\end{lemma}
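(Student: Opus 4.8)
The plan is to use that $\rho(K^{-2})$ is diagonal in the Al-Salam--Chihara basis $\{v_{\mdot,s}(n)\mid n\in\N\}$ and to recognize the resulting eigenvalue identity as a rescaling of the $q^2$-difference equation \eqref{eq:diffeq ACS}.

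First I would compute the action of $\rho(K^{-2})$ on this basis. Since $\pi(K^{-2})=\pi(K^{-1})^2$ acts diagonally, $[\pi(K^{-2})f](n)=q^{-k-2n}f(n)$, and since $\La\de_n=v_{\mdot,s}(n)$ (from the proof of Proposition \ref{prop:Lambda}), we get
\[
\rho(K^{-2})\,v_{\mdot,s}(n)=\La\,\pi(K^{-2})\,\de_n=q^{-k-2n}\,v_{\mdot,s}(n).
\]
The functions $v_{\mdot,s}(n)$ are polynomials in $x+x^{-1}$ of degrees $0,1,2,\dots$ and hence form a basis of $\mathcal P$, so a linear operator on $\mathcal P$ is determined by its values on them. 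It therefore suffices to check that the operator $L:=A(x)\mathcal T+B(x)\,\mathrm{Id}+A(x^{-1})\mathcal T^{-1}$ satisfies $[L\,v_{\mdot,s}(n)](x)=q^{-k-2n}\,v_{x,s}(n)$ for all $n$.

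Next, the scalar prefactor $\big(q^{-(3k-1)/2}/s\big)^n(q^{2k};q^2)_n/(q^2;q^2)_n$ in \eqref{eq:definition vxs} is independent of $x$ and cancels from both sides, so the claim reduces to the polynomial identity
\[
A(x)\,Q_n(q^2x)+B(x)\,Q_n(x)+A(x^{-1})\,Q_n(x/q^2)=q^{-k-2n}\,Q_n(x),
\]
where $Q_n(x)=Q_n(x;sq^k,q^k/s\mvert q^2)$ and I used $[\mathcal T\,Q_n](x)=Q_n(q^2x)$ and $[\mathcal T^{-1}Q_n](x)=Q_n(x/q^2)$. This is exactly $q^{-k}$ times \eqref{eq:diffeq ACS} with $q$ replaced by $q^2$, $a=sq^k$ and $b=q^k/s$, so that $ab=q^{2k}$ and $a+b=q^k(s+s^{-1})$: the two outer coefficients of \eqref{eq:diffeq ACS} become $A(x)$ and $A(x^{-1})$ immediately, and the middle coefficient is brought to the stated form of $B(x)$ using the elementary identities $q^{-k}(1+q^2)(q^2+q^{2k})=q^2(q^{-1}+q)(q^{1-k}+q^{k-1})$ and $q^{-k}q^2(a+b)=q^2(s+s^{-1})$, keeping the common denominator $(1-q^2x^2)(1-q^2/x^{2})$.

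I do not expect a serious obstacle here: the only real content is the bookkeeping that identifies the stated $A,B$ with the $q^{-k}$-rescaled coefficients of \eqref{eq:diffeq ACS} under $q\mapsto q^2$, together with the clean observation that it is enough to verify the operator identity on the basis $\{v_{\mdot,s}(n)\}$ rather than on all of $\mathcal P$. As a byproduct one sees that the apparent poles of $A(x)$ and $B(x)$ at $x^2\in\{1,q^{\pm2}\}$ necessarily cancel when $L$ is applied to a symmetric Laurent polynomial (since then the relevant shifted values of the polynomial agree), consistent with $L=\rho(K^{-2})$ mapping $\mathcal P$ into $\mathcal P$; no separate verification of this is needed.
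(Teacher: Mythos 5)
Your proof is correct and is essentially the paper's argument: both reduce the claim to the $q$-difference equation \eqref{eq:diffeq ACS} in base $q^2$ with $a=sq^k$, $b=q^k/s$, rescaled by $q^{-k}$. The only cosmetic difference is that the paper transports $\pi(K^{-2})$ through the inner product using $(K^{-2})^*=K^{-2}$ and rewrites the kernel $q^{-k-2n}v_{x,s}(n)$, whereas you verify the candidate difference operator directly on the eigenbasis $\{v_{\mdot,s}(n)\}$ of $\mathcal P$ — the same identity either way.
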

\begin{proof}
Let $f \in \mathcal P$. From $(K^{-2})^*=K^{-2}$ we obtain
\[
[\rho(K^{-2})f](x) = \big\langle \pi(K^{-2})(\La^{-1} f), v_{x,s} \big\rangle_{H} = \big\langle \La^{-1}f, q^{-k-2(\mdot)} v_{x,s} \big\rangle_{H}.
\]
Now we use the difference equation \eqref{eq:diffeq ACS} with $a=sq^k$, $b=q^k/s$ to rewrite $q^{-k-2n}v_{x,s}(n)$, and the result follows.
\end{proof}

\subsection{Eigenfunctions of $\widetilde Y_t$}
The difference operator $\pi(\widetilde Y_t)$ acts on $f \in F(\N)$ by
\begin{equation} \label{eq:action of tildeY}
\begin{split}
(q -q^{-1})[&\pi(\widetilde Y_{t}) f](n) =q^{(k-1)/2} (1-q^{-2k-2n+2}) f(n-1)\\
&+ (t+t^{-1})(q^{-2n-k}-1) f(n) + q^{(1-k)/2} (1-q^{-2n-2}) f(n+1).
\end{split}
\end{equation}
Note that this is precisely the action of the difference operator $\pi(Y_t)$ with $q$ replaced by $q^{-1}$. So we have the same eigenfunctions, but with $q$ replaced by $q^{-1}$; let
\begin{equation} \label{eq:definition tilde vxs}
\widetilde v_{y,t}(n)= \widetilde v_{y,t,k}(n) = \left(\frac{q^{(3k-1)/2}}{t}\right)^n  \frac{(q^{-2k};q^{-2})_n }{ (q^{-2};q^{-2})_n }Q_n(y;q^{-k}t, q^{-k}/t\mvert q^{-2}),
\end{equation}
then
\[
[\pi(\widetilde Y_t) \widetilde v_{y,t}](n) = \la_{t,y}\, \widetilde v_{y,t}(n).
\]
Note that $\widetilde v_{y,t,k,q}(n) = v_{y,t,k,q^{-1}}(n)$.
Eigenfunctions of $\widetilde Y_{t,u}$, $u \in \T$, are again directly obtained from the eigenfunctions of $\widetilde Y_t$.
\begin{lemma} \label{lem:eigenfunction tildeYtu}
For $u \in \T$,
\[
[\pi(\widetilde Y_{t,u}) M_u \widetilde v_{y,t}](n) = \la_{t,y}\, M_u \widetilde v_{y,t}(n).
\]
\end{lemma}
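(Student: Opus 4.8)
The plan is to mimic verbatim the proof of Lemma \ref{lem:eigenfunction Ysu}, the only input being the commutation relation between $\pi(\widetilde Y_{t,u})$ and the multiplication operator $M_u$.

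First I would record how $M_u$ interacts with the generators in the representation $\pi$. Since $[\pi(K)f](n)=q^{k/2+n}f(n)$ and $[\pi(K^{-1})f](n)=q^{-k/2-n}f(n)$ are diagonal, both $\pi(K)$ and $\pi(K^{-1})$ commute with $M_u$. On the other hand $\pi(E)$ shifts $n\mapsto n-1$ and $\pi(F)$ shifts $n\mapsto n+1$, so directly from $[M_u f](n)=u^n f(n)$ one gets $\pi(E)M_u = u^{-1}M_u\pi(E)$ and $\pi(F)M_u = uM_u\pi(F)$. Composing with the diagonal operator $\pi(K^{-1})$ gives $\pi(EK^{-1})M_u = u^{-1}M_u\pi(EK^{-1})$ and $\pi(FK^{-1})M_u = uM_u\pi(FK^{-1})$, while $\pi(K^{-2}-1)M_u = M_u\pi(K^{-2}-1)$.

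Next, inserting these into the definition \eqref{eq:def Y tildeY} of $\widetilde Y_{t,u}$, the prefactor $u$ in front of $EK^{-1}$ cancels the $u^{-1}$ picked up by passing $M_u$ through $\pi(EK^{-1})$, and likewise the $u^{-1}$ in front of $FK^{-1}$ cancels the $u$ from $\pi(FK^{-1})$, so that
\[
\pi(\widetilde Y_{t,u})\,M_u = M_u\,\pi(\widetilde Y_{t}),
\]
exactly as in the untilded case $\pi(Y_{s,u})M_u=M_u\pi(Y_s)$.

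Finally, applying this identity to $\widetilde v_{y,t}$ and using the eigenvalue equation $[\pi(\widetilde Y_t)\widetilde v_{y,t}](n)=\la_{t,y}\,\widetilde v_{y,t}(n)$ established just above, I obtain
\[
[\pi(\widetilde Y_{t,u})M_u\widetilde v_{y,t}](n) = [M_u\pi(\widetilde Y_t)\widetilde v_{y,t}](n) = \la_{t,y}\,[M_u\widetilde v_{y,t}](n),
\]
which is the claim. There is essentially no obstacle here: the only point requiring care is the bookkeeping of the $u^{\pm1}$ cancellations, and one could even bypass it by invoking the formal substitution $q\leftrightarrow q^{-1}$ that carries $Y$ to $\widetilde Y$ and the operator $\pi(Y_t)$ to $\pi(\widetilde Y_t)$, thereby deducing the intertwining relation directly from the already-noted identity $\pi(Y_{s,u})M_u=M_u\pi(Y_s)$.
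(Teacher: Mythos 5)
Your proposal is correct and matches the paper's (implicit) argument: the paper justifies this lemma exactly via the intertwining relation $\pi(\widetilde Y_{t,u})M_u = M_u\pi(\widetilde Y_t)$, just as it does for $Y_{s,u}$ in Lemma \ref{lem:eigenfunction Ysu}, combined with the eigenvalue equation for $\pi(\widetilde Y_t)$. Your bookkeeping of the $u^{\pm 1}$ factors through $\pi(E)$ and $\pi(F)$ is accurate, so nothing is missing.
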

We define $\widetilde {\mathcal H} = \widetilde{\mathcal H}_{k,t}$ to be the Hilbert space consisting of functions on the set
\[
S=S_{k,t,q} = \{ tq^{-k-2m} \mid m \in \N\}
\]
with inner product
\[
\langle f,g\rangle_{\widetilde{\mathcal H}} = \sum_{y \in S} f(y) \overline{g(y)}\, \widetilde w(y),
\]
where $\widetilde w(y) =\widetilde w_{k,t}(y)$ is the weight function given by
\[
\begin{split}
\widetilde w(y)= W(y;q^{-k}t, q^{-k}/t\mvert q^2)&= \frac{ 1-q^{4m+2k} /t^2 }{1-q^{2k}/t^{2}} \frac{ (q^{2k}/t^2,q^{2k};q^2)_m (q^{2m+2}/t^2;q^2)_\infty }{(q^2;q^2)_m (q^{2k+2}/t^2;q^2)_\infty} t^{-2m}q^{2m^2},
\end{split}
\]
for $y = tq^{-k-2m},  m\in \N$. From the orthogonality relations \eqref{eq:orthogonality q^-1 ASC} for the $q^{-1}$-Al-Salam--Chihara polynomials we find
\begin{equation} \label{eq:norm tilde v}
\left \langle \widetilde v_{\mdot,t}(n),\widetilde v_{\mdot,t}(n') \right \rangle_{\widetilde{\mathcal H}} = \frac{\de_{n,n'}}{\om(n)},
\end{equation}
and the set $\{\widetilde v_{\mdot,t}(n) \mid n \in \N\}$ is an orthogonal basis for $\widetilde{\mathcal H}$. Observe that the squared norm $\om(n)^{-1}$ is independent of $t$.
From the dual orthogonality relations \eqref{eq:dual orth rel ASC} it follows that $\{\widetilde v_{y,t} \mid y \in S \}$ is an orthogonal basis for $H$ with orthogonality relations
\begin{equation} \label{eq:dual orthogality tilde v}
\left \langle  \widetilde v_{y,t},\widetilde v_{y',t} \right \rangle_{H}  = \frac{\de_{y,y'}}{\widetilde w(y)}.
\end{equation}
The proof of the following result is similar to the proof of Proposition \ref{prop:Lambda}.
\begin{proposition} \label{prop:tilde Lambda}
Let $\widetilde \La=\widetilde \La_{k,t}:F_0(\N)\to \mathcal P$ be defined by
\[
(\widetilde\La f)(y) = \langle f, \widetilde v_{y,t} \rangle_{H},
\]
then $\widetilde \La$ intertwines $\pi(\widetilde Y_t)$ with multiplication by $\la_{t,y}$. Furthermore, $\widetilde \La$ extends to a unitary operator $H \to \widetilde{\mathcal H}$.
\end{proposition}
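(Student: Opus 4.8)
The plan is to follow the proof of Proposition \ref{prop:Lambda} almost verbatim, with the discrete orthogonality of the $q^{-1}$-Al-Salam--Chihara polynomials playing the role of the continuous one.

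First I would establish the intertwining property. Under the standing hypotheses $t \in \R$, $|t| \ge q^{-1}$, the element $\widetilde Y_t = \widetilde Y_{t,1}$ is self-adjoint in $\U_q$, and $\widetilde v_{y,t} \in H$ for each $y \in S$ by \eqref{eq:dual orthogality tilde v}; so for $f \in F_0(\N)$ and $y \in S$ one computes, using that $\pi$ is a $*$-representation, the eigenfunction relation $[\pi(\widetilde Y_t)\widetilde v_{y,t}](n) = \la_{t,y}\widetilde v_{y,t}(n)$, and the reality of $\la_{t,y} = \mu_t - \mu_y$ (which holds because $t, y \in \R^\times$):
\[
\big(\widetilde\La(\pi(\widetilde Y_t)f)\big)(y) = \langle \pi(\widetilde Y_t)f, \widetilde v_{y,t}\rangle_H = \langle f, \pi(\widetilde Y_t)\widetilde v_{y,t}\rangle_H = \la_{t,y}\,(\widetilde\La f)(y).
\]
As in Proposition \ref{prop:Lambda}, this is a purely algebraic manipulation: only the finitely many $n$ in the support of $f$ contribute. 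I would also record that $\widetilde v_{y,t}(n) = v_{y,t,k,q^{-1}}(n)$ is real-valued and, by Remark \ref{remark}(i) with $q$ replaced by $q^{-1}$, a polynomial of degree $n$ in $y+y^{-1}$, so that $\widetilde\La f = \sum_{n} f(n)\,\om(n)\,\widetilde v_{\mdot,t}(n)$ really does lie in $\mathcal P$, and its restriction to $S$ lies in $\widetilde{\mathcal H}$ since each $\widetilde v_{\mdot,t}(n) \in \widetilde{\mathcal H}$ by \eqref{eq:norm tilde v}.

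Next I would prove unitarity exactly as before. Setting $\de_m(n) = \de_{m,n}/\om(n)$ gives an orthogonal basis $\{\de_m \mid m \in \N\}$ of $H$ with $\|\de_m\|_H^2 = \om(m)^{-1}$, and $(\widetilde\La\de_m)(y) = \langle \de_m, \widetilde v_{y,t}\rangle_H = \widetilde v_{y,t}(m)$. Hence $\widetilde\La$ maps this orthogonal basis of $H$ onto $\{\widetilde v_{\mdot,t}(m) \mid m \in \N\}$, which is an orthogonal basis of $\widetilde{\mathcal H}$ with identical squared norms $\om(m)^{-1}$ by \eqref{eq:norm tilde v} together with the completeness statement recorded just before the proposition; an operator sending an orthogonal basis onto an orthogonal basis with matching norms extends by linearity and continuity to a unitary operator $H \to \widetilde{\mathcal H}$.

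The step requiring genuine input, as opposed to the purely formal continuous case, is the completeness of $\{\widetilde v_{\mdot,t}(m)\}$ in $\widetilde{\mathcal H}$: this rests on determinacy of the $q^{-1}$-Al-Salam--Chihara moment problem, the identification of its support with $S$, and the verification of the parameter inequalities in \eqref{eq:orthogonality q^-1 ASC} for $a = q^{-k}t$, $b = q^{-k}/t$ in base $q^2$ (one checks $ab = q^{-2k} > 1$ since $k>0$, and handles the remaining inequality using, if necessary, the symmetry \eqref{eq:ACS symmetry} to exchange $a$ and $b$, the hypothesis $|t| \ge q^{-1}$ being exactly what is needed). Since all of this has already been established in the discussion preceding the statement, the remaining argument is a line-by-line transcription of the proof of Proposition \ref{prop:Lambda}, and I do not anticipate any real obstacle.
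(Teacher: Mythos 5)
Your proposal is correct and matches the paper's intent exactly: the paper gives no separate argument for Proposition \ref{prop:tilde Lambda}, stating only that the proof is similar to that of Proposition \ref{prop:Lambda}, and you carry out precisely that transcription, with the discrete orthogonality \eqref{eq:norm tilde v} and the completeness of $\{\widetilde v_{\mdot,t}(n)\}$ (resting on determinacy of the $q^{-1}$-moment problem, as recorded before the statement) replacing the continuous ingredients. Your explicit flagging of the completeness step as the only non-formal input is a sensible addition, not a deviation.
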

Note that $\pi(\widetilde Y_t)$ has completely discrete spectrum, which is given by
$\{ \la_{t,y} \mid y \in S\}$. \\

Similar as in the previous subsection we define a representation of $\U_q$ on $\mathcal P$ by
\[
\widetilde \rho(X) = \widetilde \La \circ \pi(X) \circ \widetilde \La^{-1}, \qquad X \in \U_q.
\]
This defines a $*$-representation on $\widetilde {\mathcal H}$. In this case $\widetilde \rho(K^2)$ can be given explicitly as a $q$-difference operator using the difference equation \eqref{eq:diffeq ACS} for the Al-Salam--Chihara polynomials. If we denote by $L_{k,s,q}$ the difference operator $\rho(K^{-2})$ given in Lemma \ref{lem:rho(K-2)}, then by construction $\widetilde \rho(K^2)$ is the difference operator $L_{k,t,q^{-1}}$.
\begin{lemma} \label{lem:tilde rho(K2)}
$\widetilde \rho(K^{2})$ is the second order $q$-difference operator given by
\[
\widetilde \rho(K^{2}) = \widetilde A(y)\,\mathcal T^{-1} + \widetilde B(y)\,\mathrm{Id} + \widetilde A(y^{-1})\, \mathcal T,
\]
where $\widetilde A(y)= A_{k,q^{-1}}(y;t)$ and $\widetilde B(y)= B_{k,q^{-1}}(y;t)$.
\end{lemma}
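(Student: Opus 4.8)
The strategy is to deduce this directly from Lemma \ref{lem:rho(K-2)} by exploiting the $q\leftrightarrow q^{-1}$ symmetry that underlies the entire construction of $\widetilde\rho$ and $\widetilde{\mathcal H}$. First I would record the two elementary identifications that make this work. Since $\widetilde v_{y,t,k,q}(n) = v_{y,t,k,q^{-1}}(n)$ and the inner product $\langle\mdot,\mdot\rangle_{H}$ is invariant under $q\leftrightarrow q^{-1}$, the map $\widetilde\La_{k,t}$ of Proposition \ref{prop:tilde Lambda} is literally the map $\La_{k,t}$ of Proposition \ref{prop:Lambda} computed in base $q^{-1}$, and the target space $\widetilde{\mathcal H}_{k,t}$ (with its set $S$ and discrete Al-Salam--Chihara weight $\widetilde w$) is the base-$q^{-1}$ incarnation of $\mathcal H_{k,t}$. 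On the other hand, $[\pi_{k,q}(K^{2})f](n) = q^{k+2n}f(n)$ is exactly the sequence by which $\pi_{k,q^{-1}}(K^{-2})$ multiplies, so $\pi_{k,q}(K^{2}) = \pi_{k,q^{-1}}(K^{-2})$ as operators on $F(\N)$.

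Combining these,
\[
\widetilde\rho_{k,t,q}(K^{2}) = \widetilde\La_{k,t,q}\circ \pi_{k,q}(K^{2})\circ\widetilde\La_{k,t,q}^{-1} = \La_{k,t,q^{-1}}\circ\pi_{k,q^{-1}}(K^{-2})\circ\La_{k,t,q^{-1}}^{-1} = \rho_{k,t,q^{-1}}(K^{-2}),
\]
which is the operator $L_{k,t,q^{-1}}$. It then remains to read off Lemma \ref{lem:rho(K-2)} in base $q^{-1}$ with $s$ replaced by $t$: the elementary $q$-difference operator $\mathcal T$ there, $f(x)\mapsto f(q^{2}x)$, becomes $f(y)\mapsto f(q^{-2}y)$, i.e.\ the operator $\mathcal T^{-1}$ in the present normalization, and $\mathcal T^{-1}$ becomes $\mathcal T$. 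Hence $\rho_{k,t,q^{-1}}(K^{-2}) = A_{k,q^{-1}}(y;t)\,\mathcal T^{-1} + B_{k,q^{-1}}(y;t)\,\mathrm{Id} + A_{k,q^{-1}}(y^{-1};t)\,\mathcal T$, which is precisely the asserted formula with $\widetilde A(y)=A_{k,q^{-1}}(y;t)$ and $\widetilde B(y)=B_{k,q^{-1}}(y;t)$.

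Alternatively, one can repeat the one-line computation from the proof of Lemma \ref{lem:rho(K-2)}: write $[\widetilde\rho(K^{2})f](y) = \langle \widetilde\La^{-1}f,\, q^{k+2(\mdot)}\widetilde v_{y,t}\rangle_{H}$ using $(K^{2})^{*}=K^{2}$ and the $*$-representation property, expand $q^{k+2n}\widetilde v_{y,t}(n)$ as a linear combination of $\widetilde v_{q^{-2}y,t}(n)$, $\widetilde v_{y,t}(n)$, $\widetilde v_{q^{2}y,t}(n)$ by feeding $a=q^{-k}t$, $b=q^{-k}/t$ into the difference equation \eqref{eq:diffeq ACS} written in base $q^{-2}$ (the $n$-independent prefactors in \eqref{eq:definition tilde vxs} are unaffected by the shifts in $y$ and factor through), and resum via $(\widetilde\La g)(y)=\langle g,\widetilde v_{y,t}\rangle_{H}$. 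I do not expect a genuine obstacle: the only points requiring care are the bookkeeping of the $q\leftrightarrow q^{-1}$ substitution — in particular that $\mathcal T$ and $\mathcal T^{-1}$ trade places, explaining why $\widetilde A$ stands in front of $\mathcal T^{-1}$ whereas $A$ stood in front of $\mathcal T$ — and noting that the coefficients, being real rational functions for $t\in\R$ and $y\in S$, come out of the inner product without complex conjugation.
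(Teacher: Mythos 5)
Your proposal is correct and follows the same route as the paper, which simply asserts that ``by construction $\widetilde\rho(K^{2})$ is the difference operator $L_{k,t,q^{-1}}$'' obtained from Lemma \ref{lem:rho(K-2)} under $q\leftrightarrow q^{-1}$; you supply exactly the identifications (the $q\leftrightarrow q^{-1}$ invariance of the inner product, $\widetilde v_{y,t,k,q}=v_{y,t,k,q^{-1}}$, $\pi_{k,q}(K^{2})=\pi_{k,q^{-1}}(K^{-2})$, and the swap of $\mathcal T$ with $\mathcal T^{-1}$) that the paper leaves implicit. No issues.
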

Restricted to the set $S$ the coefficients $\widetilde A$ are given by
\[
\begin{split}
\widetilde A(tq^{-k-2m})& =\frac{q^{k+2} (1-q^{2k+2m}/t^2)(1-q^{2k+2m}) }{t^2(1-q^{2k+4m}/t^2)(1- q^{2k+4m+2}/t^2)},\\
\widetilde A(t^{-1}q^{k+2m})& =\frac{q^k (1-q^{2m}/t^2)(1-q^{2m}) }{(1-q^{2k+4m}/t^2)(1- q^{2k+4m-2}/t^2)}.
\end{split}
\]

\begin{remark}
We can now make the connection with Rosengren's generalized group element, see \cite[\S4.2]{R}. Let $e_m(n)=\de_{mn}$, then the generalized group element is essentially the element $U_{t,u}$ in an appropriate completion of $\U_q$ such that $\pi(U_{t,u}) : F(\N) \to F(\N)$ is given by $\pi(U_{t,u}) e_m = M_u \widetilde v_{tq^{-k-2m},t}$. Then $\pi(U_{t,u})e_m$ is an eigenfunction of $\pi(\widetilde Y_{t,u})$ with eigenvalue $\la_{t,tq^{-k-2m}}$,
so
\[
\pi(U_{t,u}^{-1}\widetilde Y_{t,u} U_{t,u}) e_m = \la_{t,tq^{-k-2m}} e_m,
\]
and more general
\[
\pi\left(U_{t,u}^{-1}X U_{t,u}\right) = \widetilde \rho(X)\Big|_{F(S)},\qquad X \in \U_q,
\]
where we should identity $F(\N) \cong F(S)$. Rosengren's observation that primitive elements are transformed into group-like elements is obtained from $\pi(K^{\pm 2}) e_m = q^{\pm(k+2m)}e_m$, which gives
\[
\begin{split}
\pi\left(U_{t,u}^{-1}\widetilde Y_{t,u} U_{t,u}\right) e_m & = \frac{ t(1-q^{-k-2m}) + t^{-1}(1-q^{k+2m}) }{q^{-1}-q} e_m \\
&= \pi\left( \frac{ t(1-K^{-2}) + t^{-1}(1-K^2) }{ q^{-1}-q } \right) e_m.
\end{split}
\]
Furthermore, Stokman \cite{St} showed that the assignment $X \mapsto U_{t,u}^{-1}\widetilde X U_{t,u}$ transfers the quantum group stucture to a dynamical quantum group structure, so the representations $\rho$ and $\widetilde \rho$ can be considered in the context of dynamical quantum groups. We do not use this connection in this paper, but we can recognize the `dynamical' part in the difference operators in Sections \ref{sect:multivariate ASC pol} and \ref{sect:multivariate AW pol}.
\end{remark}

\section{Eigenfunctions of two twisted primitive elements: Askey-Wilson polynomials} \label{sect:AW polynomials}
In this section we define functions which are eigenfunctions of $\rho(\widetilde Y_{t,u})$ and of $\widetilde \rho(Y_{s,u})$. These functions are multiples of Askey-Wilson polynomials, and we derive properties of the Askey-Wilson polynomials in this way. Moreover, the results in this section serve as a motivation and illustration of the methods we use in Section \ref{sect:multivariate AW pol} to study multivariate Askey-Wilson polynomials.

\bigskip
The functions we study in this section are the matrix elements for a change of base between the discrete basis $\{\widetilde v_{y,t} \mid y \in S\}$ of eigenfunctions of $\widetilde Y_t$ and the continuous basis $\{v_{x,s}\mid x \in \T\}$ of eigenfunctions of $Y_s$.
\begin{Definition} \label{def:AW pol}
For $x \in \T$ and $y \in S_{k,t,q}$, we define
\begin{equation} \label{eq:def univariate AW}
P_{\be}(x,y)  = \langle M_u \widetilde v_{y,t}, v_{x,s} \rangle_{H},
\end{equation}
where $\be$ is the ordered 5-tuple $\be = (s,t,u,k,q)$.
\end{Definition}
Note that $P_{\be}(x,y) = [\La M_u \widetilde v_{y,t}](x) = [\widetilde \La M_{u}v_{x,s}](y)$. It is not a priori clear that the sum \eqref{eq:def univariate AW} converges, since $v_{x,s} \not\in H$. In the appendix it is shown that the sum converges absolutely under the given conditions on $x$ and $y$. We show later on in Lemma \ref{lem:P=AW pol} that $P_\be(x,y)$ is essentially an Askey-Wilson polynomial, and for this reason we will sometimes refer to the functions $P_\be(x,y)$ as Askey-Wilson polynomials (even though they are not polynomials).

We can derive several fundamental properties of the Askey-Wilson polynomials from our definition \eqref{eq:def univariate AW}. We start with the orthogonality relations.
\begin{proposition} \label{prop:orthogonality N=1}
The set $\{P_\be(\mdot,y) \mid y \in S \}$ is an orthogonal basis for $\mathcal H$, with orthogonality relations
\[
\big\langle P_\be(\mdot,y), P_\be(\mdot,y') \big\rangle_{\mathcal H} = \frac{\de_{y,y'}}{\widetilde w(y)}.
\]
\end{proposition}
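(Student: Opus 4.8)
The plan is to read off the statement from unitarity of the intertwiner $\La$ of Proposition \ref{prop:Lambda} together with the dual orthogonality relations \eqref{eq:dual orthogality tilde v}. First I would record that, by \eqref{eq:dual orthogality tilde v}, the family $\{\widetilde v_{y,t} \mid y \in S\}$ is an orthogonal basis of $H$ with $\|\widetilde v_{y,t}\|_{H}^2 = 1/\widetilde w(y)$; in particular each $\widetilde v_{y,t}$ actually lies in $H$. Since $u \in \T$, the multiplication operator $M_u$ defined by $[M_u f](n)=u^n f(n)$ is unitary on $H$, so $\{M_u \widetilde v_{y,t} \mid y \in S\}$ is again an orthogonal basis of $H$, with the same squared norms $1/\widetilde w(y)$.

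Next I would invoke that $\La\colon H \to \mathcal H$ is unitary and that, by the identity noted after Definition \ref{def:AW pol} (whose convergence is justified in the appendix), $\La(M_u \widetilde v_{y,t})$ equals, as an element of $\mathcal H$, the function $x \mapsto P_\be(x,y)$. A unitary operator carries an orthogonal basis to an orthogonal basis and preserves inner products, so $\{P_\be(\mdot,y) \mid y \in S\}$ is an orthogonal basis of $\mathcal H$ and
\[
\big\langle P_\be(\mdot,y), P_\be(\mdot,y')\big\rangle_{\mathcal H} = \big\langle \La M_u \widetilde v_{y,t}, \La M_u \widetilde v_{y',t}\big\rangle_{\mathcal H} = \big\langle \widetilde v_{y,t}, \widetilde v_{y',t}\big\rangle_{H} = \frac{\de_{y,y'}}{\widetilde w(y)},
\]
which is exactly the assertion.

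The one point requiring care — more bookkeeping than a genuine obstacle — is the identification $\La(M_u \widetilde v_{y,t}) = P_\be(\mdot,y)$ inside $\mathcal H$. The operator $\La$ was first defined on $F_0(\N)$ by $(\La f)(x)=\langle f,v_{x,s}\rangle_H$ and then extended by continuity; since $M_u \widetilde v_{y,t}\notin F_0(\N)$ and $v_{x,s}\notin H$, one must check that the absolutely convergent series defining $P_\be(x,y)$ represents the same $\mathcal H$-vector as this continuous extension. I would do this by approximating $M_u \widetilde v_{y,t}$ by its truncations $f_N\in F_0(\N)$ supported on $\{0,\dots,N\}$: then $f_N\to M_u\widetilde v_{y,t}$ in $H$, so $\La f_N\to \La(M_u\widetilde v_{y,t})$ in $\mathcal H$ by boundedness of $\La$, while $(\La f_N)(x)=\sum_{n\le N} u^n\widetilde v_{y,t}(n)\,v_{x,s}(n)\,\om(n)$ converges pointwise to $P_\be(x,y)$ by the appendix estimates; passing to an a.e.-convergent subsequence pins down the limit. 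Everything else in the proposition is immediate from the unitarity of $\La$ and $M_u$.
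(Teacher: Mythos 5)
Your proof is correct and follows essentially the same route as the paper: unitarity of $\La$ and $M_u$ combined with the dual orthogonality relations \eqref{eq:dual orthogality tilde v} for $\widetilde v_{y,t}$. The extra paragraph justifying that the pointwise-defined series $P_\be(\mdot,y)$ agrees in $\mathcal H$ with the continuous extension $\La(M_u\widetilde v_{y,t})$ is a point the paper passes over silently, and your truncation argument handles it correctly.
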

\begin{proof}
The orthogonality relations and completeness follows from unitarity of $\La$ and $M_u$, and from the orthogonality relations \eqref{eq:dual orthogality tilde v} for $\widetilde v_{y,t}$,
\[
\begin{split}
\left\langle P_\be(\mdot,y), P_\be(\mdot,y')\right\rangle_{\mathcal H} &= \left\langle \La M_u \widetilde v_{y,t},\La M_u \widetilde v_{y',t} \right\rangle_{\mathcal H}  = \left\langle \widetilde v_{y,t}, \widetilde v_{y',t} \right\rangle_{H}  = \frac{\de_{y,y'}}{\widetilde w(y)}. \qedhere
\end{split}
\]
\end{proof}
Our next goal is to obtain difference equations for the Askey-Wilson polynomials $P_\be(x,y)$. Using Lemmas \ref{lem:eigenfunction Ysu} and \ref{lem:eigenfunction tildeYtu} we see that
\begin{equation} \label{eq:AW eigenfunction of tilde Y}
\begin{split}
\big[\rho(\widetilde Y_{t,u}) P_\be(\mdot,y)\big](x) &= \big[\La( \pi(\widetilde Y_{t,u}) M_u \widetilde v_{y,t})\big](x) = \la_{t,y}\, P_\be(x,y), \\
\big[\widetilde \rho(Y_{s,u}) P_\be(x,\mdot)\big](y) &= \big[\widetilde \La( \pi(Y_{s,u}) M_{u} v_{x,s})\big](y) = \la_{x,s}\, P_\be(x,y),
\end{split}
\end{equation}
so $P_\be(x,y)$ is an eigenfunction of $\rho(\widetilde Y_{t,u})$ and also of $\widetilde \rho(Y_{s,u})$. We will show that the eigenfunction equations \eqref{eq:AW eigenfunction of tilde Y} are essentially the difference equation and three-term recurrence relation for the Askey-Wilson polynomials by realizing $\rho(\widetilde Y_{t,u})$ and $\widetilde \rho(Y_{s,u})$ explicitly as difference operators. To do this we express first $\widetilde Y_{t,u}$ in terms of $Y_{s}$ and $K^{-2}$. Similarly, $Y_{s,u}$ can be expresses in terms of $\widetilde Y_{t}$ and $K^2$.
\begin{lemma} \label{lem:Ys S T} \*
\begin{enumerate}[(i)]
\item Let $S,T \in \U_q$ be given by
\[
S = K^{-2} (Y_s  + \mu_s1)-\mu_s1 \quad \text{and} \quad T = \frac{K^{-2}Y_s-Y_sK^{-2}} {q^{-1}-q},
\]
then $S$ and $T$ are independent of $s$, and
\[
\widetilde Y_{t,u} = \frac{(u+u^{-1})S + (qu-q^{-1}u^{-1})T}{q+q^{-1}} + \mu_t(1-K^{-2}).
\]
\item Let $\widetilde S, \widetilde T \in \U_q$ be given by
\[
\widetilde S = K^{2} (\widetilde Y_t  -\mu_t1)+\mu_t1 \quad \text{and} \quad  \widetilde T = \frac{\widetilde Y_tK^{2}- K^{2}\widetilde Y_t} {q^{-1}-q},
\]
then $\widetilde S$ and $\widetilde T$ are independent of $t$, and
\[
Y_{s,u} = \frac{(u+u^{-1})\widetilde S + (q^{-1}u^{-1}-qu)\widetilde T}{q+q^{-1}} - \mu_s(1-K^{2}).
\]
\end{enumerate}
\end{lemma}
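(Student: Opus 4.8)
The plan is to prove both identities by direct normal-ordering inside $\U_q$. Since $\U_q$ has a PBW basis, it suffices to rewrite each side as a linear combination of the ordered monomials $EK^{-1},FK^{-1},K^{-2},1$ in part~(i), and of $EK,FK,K^{2},1$ in part~(ii), and then compare coefficients. All the reordering uses only the relations $KE=qEK$ and $KF=q^{-1}FK$, which iterate to $K^{-2}EK=q^{-2}EK^{-1}$ and $K^{-2}FK=q^{2}FK^{-1}$ for part~(i), and to $K^{2}EK^{-1}=q^{2}EK$ and $K^{2}FK^{-1}=q^{-2}FK$ for part~(ii).

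First I would check that $S$ and $T$ are independent of $s$. From \eqref{eq:def Y tildeY} one has $Y_s-Y_{s'}=(\mu_s-\mu_{s'})(K^{2}-1)$; feeding this into the expression for $S$, the $\mu_s$-dependent terms telescope away because $K^{-2}K^{2}=1$, and feeding it into $K^{-2}Y_s-Y_sK^{-2}$ one picks up only the extra term $(\mu_s-\mu_{s'})\bigl[K^{-2}(K^{2}-1)-(K^{2}-1)K^{-2}\bigr]$, which vanishes since $K^{-2}$ and $K^{2}-1$ commute. Hence $S$ and $T$ are unchanged, and the same argument with $\widetilde Y_t-\widetilde Y_{t'}=-(\mu_t-\mu_{t'})(K^{-2}-1)$ shows that $\widetilde S$ and $\widetilde T$ do not depend on $t$. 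Having established this, I may set $s=1$ in part~(i); the reordering rules above then give
\[
S=q^{-3/2}EK^{-1}-q^{3/2}FK^{-1},\qquad T=q^{-1/2}EK^{-1}+q^{1/2}FK^{-1},
\]
the coefficients of $T$ emerging, after dividing by $q^{-1}-q$, from the elementary identities $\frac{q^{-3/2}-q^{1/2}}{q^{-1}-q}=q^{-1/2}$ and $\frac{q^{-1/2}-q^{3/2}}{q^{-1}-q}=q^{1/2}$.

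Finally I would form $\frac{(u+u^{-1})S+(qu-q^{-1}u^{-1})T}{q+q^{-1}}$: collecting the $EK^{-1}$- and $FK^{-1}$-terms, the $u^{-1}$-contribution to the $EK^{-1}$-coefficient and the $u$-contribution to the $FK^{-1}$-coefficient cancel, and after simplifying the remaining $q$-powers (e.g.\ $\frac{q^{-3/2}+q^{1/2}}{q+q^{-1}}=q^{-1/2}$ and $\frac{q^{3/2}+q^{-1/2}}{q+q^{-1}}=q^{1/2}$) one is left with exactly $uq^{-1/2}EK^{-1}-u^{-1}q^{1/2}FK^{-1}$, which by \eqref{eq:def Y tildeY} equals $\widetilde Y_{t,u}+\mu_t(K^{-2}-1)$; rearranging yields~(i). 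Part~(ii) follows by repeating this bookkeeping with $\widetilde S,\widetilde T$ in place of $S,T$, or, more economically, by applying to (i) the substitution $q\mapsto q^{-1}$, $K\mapsto K^{-1}$ (fixing $E$ and $F$), which preserves the defining relations of $\U_q$ and interchanges $Y_{s,u}\leftrightarrow\widetilde Y_{s,u}$, $S\leftrightarrow\widetilde S$, $T\leftrightarrow\widetilde T$, while sending $\mu_s\mapsto-\mu_s$. I do not anticipate any genuine obstacle: this is an identity in a PBW-ordered algebra, so the proof is pure computation. The only points requiring care are keeping track of the half-integer powers of $q$ and the signs when normal-ordering $K^{\mp2}EK^{\pm1}$ and $K^{\mp2}FK^{\pm1}$, and verifying that the $\mu_s$- (resp.\ $\mu_t$-) dependent pieces genuinely cancel, which is what justifies specializing the parameter to $1$ before finishing the calculation.
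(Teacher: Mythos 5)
Your proof is correct and is essentially the paper's own argument: the paper likewise normal-orders to obtain $S=q^{-3/2}EK^{-1}-q^{3/2}FK^{-1}$ and $T=q^{-1/2}EK^{-1}+q^{1/2}FK^{-1}$, inverts to express $EK^{-1}$ and $FK^{-1}$ in terms of $S$ and $T$, substitutes into the definition of $\widetilde Y_{t,u}$, and declares part (ii) ``similar''; your separate a priori check of $s$-independence via $Y_s-Y_{s'}=(\mu_s-\mu_{s'})(K^2-1)$ is a harmless embellishment of the paper's ``clearly independent of $s$''. One remark on part (ii): your substitution $q\mapsto q^{-1}$, $K\mapsto K^{-1}$ applied to (i) yields the coefficient $(q^{-1}u-qu^{-1})$ for $\widetilde T$, not the $(q^{-1}u^{-1}-qu)$ printed in the statement; a direct computation (which gives $\widetilde S=q^{3/2}EK-q^{-3/2}FK$ and $\widetilde T=q^{1/2}EK+q^{-1/2}FK$, hence $\beta(q+q^{-1})=q^{-1}u-qu^{-1}$ when matching the $EK$ and $FK$ coefficients of $Y_{s,u}$) confirms that your coefficient is the correct one, so your argument exposes a typo ($u\leftrightarrow u^{-1}$, immaterial only when $u=\pm1$) in the printed formula rather than revealing a gap in your proof.
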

\begin{proof}
From the definition of $Y_s$  and the defining relations for $\mathcal U_q$ we find
\[
S = q^{-\frac32}E K^{-1} - q^\frac32F K^{-1}, \qquad T = q^{-\hf}E K^{-1} + q^{\hf}F K^{-1},
\]
which is clearly independent of $s$. Then
\[
q^{-\hf} E K^{-1} = \frac{  S + q T}{q+q^{-1}}, \qquad q^\hf K^{-1}F = \frac{ q^{-1} T - S }{q+q^{-1}}.
\]
Then the result for part (i) follows from the definition of $\widetilde Y_{t,u}$. The proof for part (ii) is similar.
\end{proof}

Using the explicit realizations of $\rho(K^{-2})$ and $\rho(Y_s)$ as difference and multiplication operators from Proposition \ref{prop:Lambda} and Lemma \ref{lem:rho(K-2)} we can now realize $\rho(\widetilde Y_{t,u})$ explicitly as a difference operator. Initially this is a difference operator on $\mathcal P$, and it can be extended to a difference operator acting on meromorphic functions. Identity \eqref{eq:AW eigenfunction of tilde Y} can then be written as a $q$-difference equation for the Askey-Wilson polynomials $P_\be(x,y)$.
\begin{proposition} \label{prop:diffeq AW}
$\rho(\widetilde Y_{t,u})$ is the $q$-difference operator given by
\[
\rho(\widetilde Y_{t,u}) = A_\be(x) \, \mathcal T +  B_\be(x) \, \mathrm{Id} + A_\be(x^{-1}) \mathcal T^{-1},
\]
where
\[
\begin{split}
A_\be(x) &= -A(x)\frac{t(1-qx/ut)(1-u/qtx)}{q^{-1} -q},\\
B_\be(x) &= B(x)\left (\frac{(u+u^{-1})\mu_x}{q^{-1}+q} -\mu_t\right)- \frac{(u+u^{-1})\mu_s}{q^{-1}+q}+\mu_t,
\end{split}
\]
where $A$ and $B$ are given in Lemma \ref{lem:rho(K-2)}.
In particular, $P_\be(x,y)$ satisfies
\[
\la_{t,y}\, P_\be(x,y) =
A_\be(x)P_\be(x q^2,y) +  B_\be(x)P_\be(x,y) + A_\be(x^{-1})P_\be(x/q^2,y).
\]
\end{proposition}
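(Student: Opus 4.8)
The plan is to exploit that $\rho$ is an algebra homomorphism, so that $\rho(\widetilde Y_{t,u})$ is completely determined by $\rho(Y_s)$ and $\rho(K^{-2})$ via the algebraic identity of Lemma \ref{lem:Ys S T}(i). The elements $S,T$ there are independent of $s$, so I may compute $\rho(S)$ and $\rho(T)$ using the particular parameter $s$ built into $\rho=\rho_{k,s}$. For this $s$, Proposition \ref{prop:Lambda} gives that $\rho(Y_s)$ is multiplication by $\la_{x,s}=\mu_x-\mu_s$, while Lemma \ref{lem:rho(K-2)} realises $\rho(K^{-2})$ as the explicit second order $q$-difference operator $A(x)\mathcal T+B(x)\mathrm{Id}+A(x^{-1})\mathcal T^{-1}$.

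First I would observe that $\rho(Y_s)+\mu_s\mathrm{Id}$ is multiplication by $\mu_x$; writing $M$ for this operator, Lemma \ref{lem:Ys S T}(i) gives
\[
\rho(S)=\rho(K^{-2})\circ M-\mu_s\mathrm{Id},\qquad
\rho(T)=\frac{\rho(K^{-2})\circ M-M\circ\rho(K^{-2})}{q^{-1}-q}.
\]
Composing the difference operator $\rho(K^{-2})$ with $M$ and collecting the coefficients of $\mathcal T$, $\mathrm{Id}$, $\mathcal T^{-1}$ is routine: the coefficients of $\rho(S)$ are $A(x)\mu_{q^2x}$, $B(x)\mu_x-\mu_s$, $A(x^{-1})\mu_{q^{-2}x}$, where $\mu_{q^{\pm2}x}=(q^{\pm2}x+q^{\mp2}x^{-1})/(q^{-1}-q)$, while those of $\rho(T)$ are $A(x)(\mu_{q^2x}-\mu_x)/(q^{-1}-q)$, $0$, $A(x^{-1})(\mu_{q^{-2}x}-\mu_x)/(q^{-1}-q)$ (the middle one vanishing since multiplication operators commute). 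Substituting into Lemma \ref{lem:Ys S T}(i) and adding $\mu_t(\mathrm{Id}-\rho(K^{-2}))$ exhibits $\rho(\widetilde Y_{t,u})$ as a second order $q$-difference operator whose $\mathrm{Id}$-coefficient is at once the stated $B_\be(x)$. For the $\mathcal T$-coefficient one factors out $A(x)$ and must check that
\[
\frac{(u+u^{-1})\mu_{q^2x}}{q+q^{-1}}+\frac{(qu-q^{-1}u^{-1})(\mu_{q^2x}-\mu_x)}{(q+q^{-1})(q^{-1}-q)}-\mu_t
=-\frac{t(1-qx/ut)(1-u/qtx)}{q^{-1}-q};
\]
using the factorisations $q^{\pm2}-1=\pm q^{\pm1}(q-q^{-1})$ and $q^{\pm2}+1=q^{\pm1}(q+q^{-1})$ the left side collapses to $(q^{-1}ux^{-1}+qu^{-1}x-t-t^{-1})/(q^{-1}-q)$, whose numerator expands as $-t(1-qx/ut)(1-u/qtx)$. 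Replacing $x$ by $x^{-1}$ throughout (and using $\mu_z=\mu_{z^{-1}}$) yields the $\mathcal T^{-1}$-coefficient $A_\be(x^{-1})$.

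For the final assertion, combine the operator realisation just obtained with the eigenvalue equation $\rho(\widetilde Y_{t,u})P_\be(\mdot,y)=\la_{t,y}P_\be(\mdot,y)$ from \eqref{eq:AW eigenfunction of tilde Y}. The operator identity $\rho(\widetilde Y_{t,u})=A_\be(x)\mathcal T+B_\be(x)\mathrm{Id}+A_\be(x^{-1})\mathcal T^{-1}$ holds a priori on $\mathcal P$; since $A_\be,B_\be$ are rational in $x$, the right side defines a $q$-difference operator on meromorphic functions, and since each summand of the absolutely convergent series $P_\be(x,y)=\langle M_u\widetilde v_{y,t},v_{x,s}\rangle_H$ is, as a function of $x$, a scalar multiple of a polynomial $v_{\mdot,s}(n)\in\mathcal P$, applying the operator term by term transfers the eigenvalue equation to the stated difference equation, which then holds for all $x$ by analytic continuation.

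I expect the main obstacle to be purely computational: performing the composition of difference and multiplication operators and, above all, massaging the $\mathcal T$- and $\mathcal T^{-1}$-coefficients into the factored forms $A_\be(x)$, $A_\be(x^{-1})$. A secondary, more delicate point is the passage from the operator identity on $\mathcal P$ to the difference equation for the meromorphic function $P_\be(\mdot,y)$, which relies on the convergence estimates in the appendix.
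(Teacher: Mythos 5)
Your proposal is correct and follows essentially the same route as the paper: compute $\rho(S)$ and $\rho(T)$ from Lemma \ref{lem:Ys S T}(i) using the multiplication operator $\rho(Y_s+\mu_s 1)=\mu_x$ and the difference operator $\rho(K^{-2})$ of Lemma \ref{lem:rho(K-2)}, then recombine and factor the $\mathcal T$-coefficient. Your intermediate formulas for $\rho(S)$, $\rho(T)$ and the collapse of the bracket to $(q^{-1}ux^{-1}+qu^{-1}x-t-t^{-1})/(q^{-1}-q)$ all check out and coincide with the paper's computation.
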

A calculation shows that
\[
B_\be(x) = \frac{tq^{-k}+q^k/t+t+1/t}{q^{-1}-q}  + F(x)+F(x^{-1}),
\]
with
\[
\begin{split}
F(x) & = \frac{tq^{-k}(1-q^k sx)(1-q^kx/s)(1-qux/t)(1-qx/ut)}{(q^{-1}-q)(1-x^2)(1-q^2x^2)} \\
& = -A_\be(x) \frac{ 1-qux/t }{ 1- u/qtx}
\end{split}
\]
\begin{proof}
$\rho(K^{-2})$ is the $q$-difference operator from Lemma \ref{lem:rho(K-2)}, and note that $\rho(Y_s+\mu_s1)$ is multiplication by $\la_{x,s}+\mu_s=\mu_x$. Then $S$ from Lemma \ref{lem:Ys S T} is the $q$-difference operator given by
\[
\rho(S) = \mu_{q^2 x}A(x) \mathcal T +\Big(\mu_x B(x)-\mu_s \Big) \mathrm{Id} +  \mu_{q^{-2}x} A(x^{-1}) \mathcal T^{-1},\\
\]
and $T$ is given by
\[
\rho(T) = \frac{\mu_{q^2x} - \mu_{x}}{q^{-1}-q}A(x) \mathcal T + \frac{\mu_{q^{-2}x} - \mu_{x}}{q^{-1}-q} A(x^{-1}) \mathcal T^{-1}.
\]
Expressing $\widetilde Y_{t,u}$ in terms of $S$ and $T$ using Lemma \ref{lem:Ys S T}, it follows that $\rho(\widetilde Y_{t,u})$ indeed has the form
\[
A_\be(x) \, \mathcal T +  B_\be(x) \, \mathrm{Id} + A_\be(x^{-1}) \mathcal T^{-1}
\]
with $B_\be(x)$ as stated above, and $A_\be(x)$ is given by
\[
A_\be(x)= A(x) \left(\frac{(u+u^{-1})\mu_{q^2 x}}{q^{-1}+q} + \frac{(qu-q^{-1}u^{-1})(\mu_{q^2x}-\mu_x) }{(q^{-1}+q)(q^{-1}-q)} -\mu_t \right).
\]
A small calculation show that the expression between large brackets is equal to
\[
-\frac{t(1-qx/tu)(1-u/qtx)}{q^{-1} -q}. \qedhere
\]
\end{proof}
In a similar way as in Proposition \ref{prop:diffeq AW} we can realize $\widetilde \rho(Y_{s,u})$ as a $q$-difference operator. By construction this operator is obtained from the difference operator $\rho(\widetilde Y_{t,u})$ by replacing $\be=(s,t,u,k,q)$ by $\widetilde \be = (t,s,u,k,q^{-1})$. This immediately leads to a $q$-difference equation in $y$ for $P_\be(x,y)$.
\begin{proposition} \label{prop:diffeq in y AW}
$\widetilde \rho(Y_{s,u})$ is the $q$-difference operator given by
\[
\widetilde \rho(Y_{s,u})= A_{\widetilde \be}(y)\,\mathcal T^{-1} + B_{\widetilde \be}(y)\, \mathrm{Id} + A_{\widetilde \be}(y^{-1})\, \mathcal T.
\]
In particular, for $y \in S$ the Askey-Wilson polynomials $P_\be(x,y)$ satisfy
\[
\la_{x,s}\, P_\be(x,y) = A_{\widetilde \be}(y)P_\be(x,y/q^2) +  B_{\widetilde \be}(y)P_\be(x,y) + A_{\widetilde \be}(y^{-1})P_\be(x,y q^2),
\]
with $P_{\be}(x,tq^{-k+2})=0$.
\end{proposition}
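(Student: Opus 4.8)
The plan is to obtain Proposition~\ref{prop:diffeq in y AW} as the mirror image, under the formal replacement $q \mapsto q^{-1}$ together with $s \leftrightarrow t$, of Proposition~\ref{prop:diffeq AW}. Recall the structural facts already recorded in Section~\ref{sect:Al-Salam--Chihara}: one has $\widetilde v_{y,t,k,q}(n) = v_{y,t,k,q^{-1}}(n)$; the operator $\widetilde\La$ is assembled from the functions $\widetilde v_{y,t}$ exactly as $\La$ is assembled from the $v_{x,s}$; the element $\widetilde Y_t$ is obtained from $Y_t$ by $q\mapsto q^{-1}$; and, writing $K=q^H$, the element $K^2$ is obtained from $K^{-2}$ by $q\mapsto q^{-1}$. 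Moreover Lemma~\ref{lem:Ys S T}(ii), Proposition~\ref{prop:tilde Lambda}, and Lemma~\ref{lem:tilde rho(K2)} are set up precisely as the mirror images of Lemma~\ref{lem:Ys S T}(i), Proposition~\ref{prop:Lambda}, and Lemma~\ref{lem:rho(K-2)}. Thus in the derivation of Proposition~\ref{prop:diffeq AW}---which expressed $\widetilde Y_{t,u}$ through $Y_s$ and $K^{-2}$, inserted the multiplication realization of $\rho(Y_s+\mu_s 1)$ and the second-order $q$-difference realization of $\rho(K^{-2})$, and assembled the result---the roles of $K^{-2}$, $Y_s$, $\widetilde Y_{t,u}$ are now played by $K^2$, $\widetilde Y_t$, $Y_{s,u}$, and the elementary shift $\mathcal T$ is replaced by $\mathcal T^{-1}$ since the underlying base has changed from $q^2$ to $q^{-2}$. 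Since every step in the proof of Proposition~\ref{prop:diffeq AW} is an algebraic identity in the difference operators valid for all $q\neq 1$, applying it in this mirrored setting (with $q$ replaced by $q^{-1}$ everywhere) yields $\widetilde\rho(Y_{s,u}) = A_{\widetilde\be}(y)\,\mathcal T^{-1} + B_{\widetilde\be}(y)\,\mathrm{Id} + A_{\widetilde\be}(y^{-1})\,\mathcal T$ with $\widetilde\be = (t,s,u,k,q^{-1})$; nothing new has to be verified beyond tracking the substitutions through the coefficient formulas.

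Granting the operator identity, the $q$-difference equation for $P_\be$ is immediate from the second line of \eqref{eq:AW eigenfunction of tilde Y}: that line states that $P_\be(x,\mdot)$ is an eigenfunction of $\widetilde\rho(Y_{s,u})$ with eigenvalue $\la_{x,s}$, so applying the explicit difference operator to $P_\be(x,\mdot)$ and evaluating at $y\in S$ gives exactly $\la_{x,s}\,P_\be(x,y) = A_{\widetilde\be}(y)P_\be(x,y/q^2) + B_{\widetilde\be}(y)P_\be(x,y) + A_{\widetilde\be}(y^{-1})P_\be(x,y q^2)$. One should note here, as in the paragraph preceding Proposition~\ref{prop:diffeq AW}, that $\widetilde\rho(Y_{s,u})$ is initially a difference operator on $\mathcal P$ but is to be read as a difference operator acting on meromorphic (or $S$-valued) functions, and that $P_\be(x,\mdot)$ is $[\widetilde\La M_u v_{x,s}](\mdot)$; since the sum defining $P_\be(x,y)$ converges absolutely for all $y$ appearing (appendix), the evaluation is legitimate.

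Finally, at the endpoint $y=tq^{-k}$ of $S$ (the value $m=0$) the shift $\mathcal T$ sends $y$ to $tq^{-k+2}\notin S$, so one must account for the boundary value $P_\be(x,tq^{-k+2})=0$. This follows from the vanishing of the accompanying coefficient: $A_{\widetilde\be}(y^{-1})$ contains the factor $\widetilde A(y^{-1}) = A_{k,q^{-1}}(y^{-1};t)$, and the explicit restriction of $\widetilde A$ to $S$ displayed after Lemma~\ref{lem:tilde rho(K2)} shows that $\widetilde A(t^{-1}q^{k+2m})$ carries the factor $1-q^{2m}$, hence vanishes at $m=0$. Therefore $A_{\widetilde\be}(y^{-1})$ vanishes at $y=tq^{-k}$, the operator $\widetilde\rho(Y_{s,u})$ preserves $F(S)$, and the displayed three-term recurrence holds for every $y\in S$ once one adopts the convention $P_\be(x,tq^{-k+2})=0$, the natural analogue of the convention $f(-1)=0$ used for the three-term recurrence of the Al-Salam--Chihara polynomials. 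The only genuine work in the argument is the bookkeeping of the simultaneous substitutions $q\mapsto q^{-1}$, $s\leftrightarrow t$, and $\mathcal T\leftrightarrow\mathcal T^{-1}$ in the proof of Proposition~\ref{prop:diffeq AW}; I expect checking that the coefficients come out precisely as $A_{\widetilde\be}$ and $B_{\widetilde\be}$ (and not some other repackaging of the same data) to be the main, though purely computational, obstacle.
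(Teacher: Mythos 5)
Your proposal is correct and follows essentially the same route as the paper, which obtains $\widetilde\rho(Y_{s,u})$ from $\rho(\widetilde Y_{t,u})$ precisely by the substitution $\be=(s,t,u,k,q)\mapsto\widetilde\be=(t,s,u,k,q^{-1})$ and then reads off the recurrence from the eigenvalue equation \eqref{eq:AW eigenfunction of tilde Y}. Your additional justification of the boundary convention $P_\be(x,tq^{-k+2})=0$ via the vanishing of $A_{\widetilde\be}(y^{-1})$ at $m=0$ (the factor $1-q^{2m}$ in $\widetilde A(t^{-1}q^{k+2m})$) is a correct elaboration of a point the paper leaves implicit.
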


To end this section, let us match the properties of the functions $P_\be(x,y)$ to properties of standard Askey-Wilson polymials defined by \eqref{eq:AW pol}, see \cite{AW}, \cite[Chapter 15]{I}, \cite[Section 14.1]{KLS}. First we show that $P_\be(x,y)$ is a multiple of an Askey-Wilson polynomial, see also \cite[Proposition 4.2]{R}.
\begin{lemma} \label{lem:P=AW pol}
Let
\begin{equation} \label{eq:AW parameters}
(a,b,c,d)= (q^k s, q^k/s, qu/t, q/ut),
\end{equation}
then
\[
P_{\be}(x,y)= (-1)^m d^{-m}q^{-m(m-1)}\frac{(acq^{2m},bcq^{2m};q^2)_\infty }{ (ab;q^2)_m (cx^{\pm 1};q^2)_\infty} p_m(x;a,c,b,d \mvert q^2),
\]
where $y=tq^{-k-2m}$.
\end{lemma}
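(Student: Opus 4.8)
The plan is to compute the inner product $P_\be(x,y)=\langle M_u\widetilde v_{y,t},v_{x,s}\rangle_H$ explicitly as a single series and recognize it as a (multiple of a) terminating ${}_4\varphi_3$, i.e.\ an Askey-Wilson polynomial in base $q^2$. Writing $y=tq^{-k-2m}$, recall from \eqref{eq:definition tilde vxs} that $\widetilde v_{y,t}(n)$ equals a power of $q^{(3k-1)/2}/t$ times $(q^{-2k};q^{-2})_n/(q^{-2};q^{-2})_n$ times the $q^{-1}$-Al-Salam--Chihara polynomial $Q_n(y;q^{-k}t,q^{-k}/t\mvert q^{-2})$; but the explicit $q^{-1}\to q$ transformation already recorded in the excerpt gives
\[
Q_n(tq^{-k-2m};q^{-k}t,q^{-k}/t\mvert q^{-2}) = (-q^k/t)^m q^{-m(m-1)}\frac{(q^{2k+2};q^2)_\dots}{\dots}\,
\rphis{2}{1}{q^{-2m},q^{2k+2m}/t^2}{q^{2k+2}/t^2}{q^2,q^{2+2n}},
\]
so $\widetilde v_{y,t}(n)$ is, up to an $n$-independent factor times $x$-free factors depending on $m$, equal to $(q^{-2k};q^{-2})_n/(q^{-2};q^{-2})_n$ times that ${}_2\varphi_1$ evaluated at argument $q^{2+2n}$. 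I would substitute this together with the explicit expressions for $v_{x,s}(n)$ \eqref{eq:definition vxs} and $\om(n)$ into the defining sum, interchange the (finite, since the ${}_2\varphi_1$ terminates) summations over $m'\le m$ and over $n$, and perform the inner sum over $n\in\N$.

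The inner sum over $n$ is the crucial calculation: after collecting the $n$-dependent factors one gets $\sum_{n}\bigl(\text{geometric}\bigr)^n\,\frac{(q^{2k};q^2)_n}{(q^2;q^2)_n}\cdot(\text{coefficient from }M_u u^n)\cdot Q_n(x;q^ks,q^k/s\mvert q^2)$, which by the generating-function identity for Al-Salam--Chihara polynomials (equivalently, by the ${}_3\varphi_2$-form \eqref{eq:ASC 3phi2} of $Q_n$ and the $q$-binomial theorem applied to the resulting free sum) evaluates to a product of theta-type $q$-shifted factorials in $x$ times a ${}_2\varphi_1$ in $x$. Alternatively, and perhaps more cleanly, one can avoid the generating function entirely by instead using the interpretation $P_\be(x,y)=[\widetilde\La M_u v_{x,s}](y)$, i.e.\ expanding $M_u v_{x,s}$ on the discrete basis $\{\widetilde v_{y,t}\}$; but either route reduces to one nontrivial $q$-series evaluation. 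Then, after rewriting the various $q^{-2}$-Pochhammer symbols in base $q^2$ via $(A;q^{-2})_n=(-A)^nq^{-n(n-1)}(A^{-1};q^2)_n$ and matching parameters through \eqref{eq:AW parameters} $(a,b,c,d)=(q^ks,q^k/s,qu/t,q/ut)$, the remaining finite sum over $m'$ should collapse into the ${}_4\varphi_3$ of \eqref{eq:AW pol} for $p_m(x;a,c,b,d\mvert q^2)$, with the claimed prefactor $(-1)^m d^{-m}q^{-m(m-1)}(acq^{2m},bcq^{2m};q^2)_\infty / \bigl((ab;q^2)_m(cx^{\pm1};q^2)_\infty\bigr)$ emerging from the collected $m$-dependent and $x$-dependent constants.

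The main obstacle I expect is bookkeeping rather than conceptual: keeping track of all the powers of $q$, the signs, and the several $q$-shifted factorials (including the infinite ones in $x$ that make $P_\be$ non-polynomial) through the base change $q^{-2}\leftrightarrow q^2$, and verifying that the parameter dictionary really produces $p_m(x;a,c,b,d\mvert q^2)$ with $b$ and $c$ swapped relative to the naive ordering — this swap is exactly why the statement writes $p_m(x;a,c,b,d\mvert q^2)$ and not $p_m(x;a,b,c,d\mvert q^2)$, and it must be tracked carefully through the symmetry of the Askey-Wilson polynomial in its four parameters. A secondary point to be careful about is convergence and the legitimacy of interchanging sums: the sum over $n$ is infinite, but absolute convergence holds for $x\in\T$, $y\in S$ (this is proved in the appendix, which I may cite), and the sum over $m'$ is finite because the ${}_2\varphi_1$ in $n$ terminates, so Fubini applies. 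Once the explicit ${}_4\varphi_3$ is in hand, comparison with \eqref{eq:AW pol} is immediate, and (as a consistency check, though not logically needed here) the difference equations of Propositions \ref{prop:diffeq AW} and \ref{prop:diffeq in y AW} must match the standard Askey-Wilson difference and recurrence relations under this identification.
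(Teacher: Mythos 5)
Your plan is viable but it is a genuinely different route from the paper's. You propose to compute the defining sum $\sum_n \om(n)u^n v_{x,s}(n)\widetilde v_{y,t}(n)$ head-on for general $m$: expand $\widetilde v_{tq^{-k-2m},t}(n)$ as a terminating $_2\varphi_1$ in $q^{2n}$, interchange the sums, evaluate the inner $n$-sum by a generating-function identity, and reassemble the finite outer sum into the $_4\varphi_3$ of \eqref{eq:AW pol}. This is essentially Rosengren's original computation for \cite[Proposition 4.2]{R}. The paper deliberately avoids it: it observes that the eigenvalue equation of Proposition \ref{prop:diffeq in y AW} is a three-term recurrence in $m$ for $P_m(x)=P_\be(x,tq^{-k-2m})$, rewrites its coefficients $A_m^{\pm}, B_m$ in the parameters \eqref{eq:AW parameters}, matches them against the standard Askey--Wilson recurrence \eqref{eq:standard AW rec rel}, and concludes $P_m(x)=P_0(x)\bar R_m(x)$ by uniqueness of the solution with prescribed initial value. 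The only series evaluation then needed is the single case $P_0(x)=P_\be(x,tq^{-k})$, done in the appendix with the $q$-binomial theorem. What the paper's route buys is that the general-$m$ double-sum manipulation disappears, and the recurrence identification is work one wants to do anyway to connect $\rho$, $\widetilde\rho$ to the classical difference/recurrence relations. What your route buys is independence from the difference-operator machinery of Propositions \ref{prop:diffeq AW} and \ref{prop:diffeq in y AW}, producing the $_4\varphi_3$ directly from the inner product.

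One caveat: the entire technical content of your proof lives in the two steps you describe as ``one nontrivial $q$-series evaluation'' and ``should collapse,'' and neither is carried out. The inner sum you need is $\sum_n \frac{(q^{2k};q^2)_n}{(q^2;q^2)_n}z^n Q_n(x;q^ks,q^k/s\mvert q^2)$ with $q^{2k}=ab$; this is not one of the clean product-form generating functions but rather evaluates (after a series rearrangement of the type used in the appendix for $m=0$) to a prefactor times a nonterminating $_2\varphi_1$ in $x$, and the subsequent collapse of the $m'$-sum of such $_2\varphi_1$'s into a single terminating $_4\varphi_3$ requires a further nontrivial rearrangement or transformation. This is known to work, but as written your argument asserts rather than establishes it, so you should either execute that computation or adopt the paper's recurrence-plus-initial-value shortcut.
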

\begin{proof}
The proof essentially boils down to comparing the recurrence relation from Proposition \ref{prop:diffeq in y AW} with the standard Askey-Wilson recurrence relation, which is
\begin{equation} \label{eq:standard AW rec rel}
\begin{split}
(x+x^{-1}-& a-a^{-1})R_n(x) = \\
&\frac{(1-abq^n)(1-acq^n)(1-adq^n)(1-abcdq^{n-1})} {a(1-abcdq^{2n})(1-abcdq^{2n-1})}\Big( R_{n+1}(x) - R_n(x) \Big) \\
&  + \frac{ a(1-q^n)(1-bcq^{n-1})(1-bdq^{n-1})(1-cdq^{n-1})}{(1-abcdq^{2n-1})(1-abcdq^{2n-2})}\Big( R_{n-1}(x) - R_n(x)\Big),
\end{split}
\end{equation}
with $R_{-1}(x)=0$ and $R_0(x)=1$, where $R_n(x) = a^n p_n(x;a,b,c,d\mvert q) / (ab,ac,ad;q)_n$.
In terms of the Askey-Wilson parameters $(a,b,c,d)$ given by \eqref{eq:AW parameters} the coefficients of the difference operator in Proposition \ref{prop:diffeq in y AW} are
\[
\begin{split}
A^+_m=A_{\widetilde \be}(tq^{-k-2m})&= - \frac{ (1-abq^{2m})(1-acq^{2m})(1-bcq^{2m})(1-abcdq^{2m-2})} {d^{-1}q^{-2m}(q^{-1}-q)(1-abcdq^{4m})(1-abcdq^{4m-2})}, \\
A^-_m = A_{\widetilde \be}(t^{-1}q^{k+2m}) & = -\frac{ (1-q^{2m})(1-adq^{2m-2})(1-bdq^{2m-2}(1-cdq^{2m-2})} {dq^{2m-2}(q^{-1}-q)(1-abcdq^{4m-4})(1-abcdq^{4m-2})},\\
B_m=B_{\widetilde \be}(tq^{-k-2m}) & = \frac{ b + b^{-1} - \sqrt{a/b} - \sqrt{b/a} }{q^{-1}-q} - \frac{A^+_m}{adq^{2m}} \frac{ 1- ad q^{2m} }{1-bcq^{2m-2}} - A^-_m \frac{adq^{2m-2}(1-bcq^{2m})}{ 1- ad q^{2m-2} }
\end{split}
\]
and the functions $P_m(x)=P_\be(x,tq^{-k-2m})$ satisfy the recurrence relation
\[
\la_{x,s} P_m(x) = A_m^+ P_{m+1}(x) + B_m P_m(x) + A_m^- P_{m-1}(x).
\]
Note here that
\[
\la_{x,s} = \frac{ x + x^{-1} - \sqrt{a/b}-\sqrt{b/a} }{q^{-1} - q}.
\]
From \eqref{eq:standard AW rec rel} it follows that the polynomials
\[
\bar R_m(x) = \frac{(-1)^m d^{-m} q^{-m(m-1)}}{(ab,ac,bc;q^2)_m} p_m(x;a,b,c,d \mvert q^2)
\]
are the unique solution for the same recurrence relation with initial value $\bar R_0(x)=1$, so that $P_m(x) = P_0(x) \bar R_m(x)$. It remains to evaluate $P_0(x)=P_\be(x,tq^{-k})$, which is done in the appendix;
\[
P_\be(x,tq^{-k}) = \frac{ (q^{k+1}us^{\pm 1}/t;q^2)_\infty }{ (qux^{\pm 1}/t;q^2)_\infty} = \frac{ (ac,bc;q^2)_\infty }{ (cx^{\pm 1};q^2)_\infty}. \qedhere
\]
\end{proof}
Now we can compare properties of $P_\be(x,y)$ with properties of the Askey-Wilson polynomials $p_m(x;a,b,c,d \mvert q^2 )$. In the proof of Lemma \ref{lem:P=AW pol} we saw that the eigenvalue equation from Proposition \ref{prop:diffeq in y AW} is the three-term recurrence relation. For the difference equation in Proposition \ref{prop:diffeq AW} observe that the coefficients $A_\be$ are given in terms of the Askey-Wilson parameters \eqref{eq:AW parameters} by
\[
A_\be(x) = -\frac{(1-cq^{-2}x^{-1})(1-ax)(1-bx)(1-dx) }{\sqrt{abcdq^{-2}}(q^{-1}-q)(1-x^2)(1-q^2x^2) }.
\]
With this expression it can easily be verified that the difference operator $M^{-1} \circ \rho(\widetilde Y_{t,u}) \circ M$, where $M$ is multiplication by $P_\be(x;tq^{-k})=C/(cx^{\pm 1};q^2)$, is the standard Askey-Wilson difference operator.

For the orthogonality relations observe that $d=\bar c$, and then the orthogonality relations in Proposition \ref{prop:orthogonality N=1} are equivalent to orthogonality relations for $p_n(x;a,b,c,d\mvert q^2)$ with respect to the weight function
\[
\frac{w(x)}{(cx^{\pm 1}, dx^{\pm 1};q^2)_\infty} = \frac{ (x^{\pm 2};q^2)_\infty }{ (ax^{\pm 1}, bx^{\pm 1}, cx^{\pm 1}, dx^{\pm 1};q^2)_\infty },
\]
which is the standard Askey-Wilson weight function.

\begin{remark}
From the recurrence relation it follows that $P_\be(x,y) = P_\be(x;tq^{-k}) p(x)$, $x \in \T$, for some $p \in \mathcal P$. This implies that we can extend $x \mapsto P_\be(x,y)$ to a meromorphic function on $\C$ with poles coming from $P_\be(x;tq^{-k})$. In particular, $P_\be(x,y)$ is also defined for $x = sq^{k+2m'}$, $m \in \N$. Then from the two difference equations we obtain the duality property
\begin{equation} \label{eq:duality}
P_\be(sq^{k+2m'},tq^{-k-2m}) = P_{\widetilde \be}(tq^{-k-2m},sq^{k+2m'}), \qquad m,m'\in \N,
\end{equation}
where $\widetilde \be$ is the ordered 5-tuple $\widetilde \be = (t,s,u,k,q^{-1})$. In case $x=sq^{k+2m'}$, $m' \in \N$, and $|t|> q^{1+k+2m'}$ it is shown in the appendix that the sum \eqref{eq:def univariate AW} converges. In this case the duality property follows directly from Definition \ref{def:AW pol} using $q \leftrightarrow q^{-1}$ invariance of the inner product and $v_{x,s,q^{-1}}(n) = \widetilde v_{x,s,q}(n)$.

Identity \eqref{eq:duality} corresponds to the following identity for Askey-Wilson polynomials,
\[
\frac{p_m(aq^{2m'};a,b,c,d \mvert q^2)}{a^{-m} (ab,ac,ad;q^2)_m} = \frac{ p_{m'}(q^{-2m}/\widetilde a;1/\widetilde a, 1/\widetilde b, 1/\widetilde c, 1/\widetilde d \mvert q^{-2}) }{\widetilde a^{-m'} (1/ab,1/ac,1/ad;q^{-2})_{m'} },
\]
where
\[
\widetilde a = q^{k}/t = \sqrt{abcd/q^2}, \quad \widetilde b = q^k t = ab/\widetilde a, \quad \widetilde c= qus=ac/\widetilde a, \quad \widetilde d = qs/u = ad/\widetilde a.
\]
In terms of $_4\varphi_3$-series this is the identity
\[
\rphis{4}{3}{q^{-m}, q^{-m'}, a_1, a_2}{b_1, b_2, b_3}{q,q} = \rphis{4}{3}{q^m, q^{m'}, 1/a_1, 1/a_2}{1/b_1, 1/b_2, 1/b_3}{1/q,1/q}, \quad b_1b_2b_3q^{m+m'+1} = a_1a_2.
\]
\end{remark}

\section{Multivariate Al-Salam--Chihara polynomials} \label{sect:multivariate ASC pol}
We extend the results from the previous two sections to a multivariate setting by considering tensor product representations. In this section we consider the multivariate analogs of the Al-Salam--Chihara polynomials $v_{x,s}(n)$ and $\widetilde v_{y,t}(n)$. Before we define the representation we are interested in, let us first introduce some convenient notation. We fix a number $N \in \N_{\geq 2}$. For $\ba=(a_1,a_2,\ldots,a_N)$ we denote
\[
\begin{aligned}
\hat \ba &= (a_N,a_{N-1},\ldots,a_1),\\
\ba_j &= (a_1,a_2,\ldots,a_j), & j=1,\ldots,N,\\
q^\ba &= (q^{a_1},q^{a_2},\ldots,q^{a_N}),\\
\Sigma(\ba)&=(a_1,\textstyle{\sum_{j=1}^2 a_j},\ldots,\textstyle{\sum_{j=1}^N a_j}).
\end{aligned}
\]

\bigskip

Let $\bk = (k_1,\ldots,k_N) \in (\R_{>0})^N$. We denote by $\pi$ the representation of $\U_q^{\tensor N}$ on $F(\N^N) \cong F(\N)^{\tensor N}$ given by $\pi=\pi_{k_1} \tensor \cdots \tensor \pi_{k_N}$. This is a $*$-representation on the Hilbert space $H=H_{\bk}$, which is the Hilbert space completion of the algebraic tensor product $H_{k_1} \tensor \cdots \tensor H_{k_N}$. The inner product for $H$ is
\[
\langle f,g\rangle_{H} = \sum_{\bn \in \N^N} f(\bn) \overline{g(\bn)} \, \om(\bn),
\]
with weight function
\[
\om(\bn) = \om_{\bk}(\bn) = \prod_{j=1}^N \om_{k_j}(n_j) = \prod_{j=1}^N \frac{ (q^2;q^2)_{n_j} }{(q^{2k_j};q^2)_{n_j}} q^{n_j(k_j-1)}.
\]
Since the univariate weight function $\om_k(n)$ is $q \leftrightarrow q^{-1}$ invariant, so is the multivariate weight function $\om(\bn)$.

\subsection{Coproducts of twisted primitive elements}
We use the following notation for compositions of coproducts. We define $\De^0$ to be the identity on $\U_q$, and for $n \geq 1$ we define $\De^n:\U_q \to \U_q^{\tensor(n+1)}$ recursively by
\[
\De^n = (\De \tensor 1^{\tensor (n-1)})\De^{n-1}.
\]
Here, and elsewhere, we use the notation $A \tensor B^{\tensor 0}= A$.
Note that we also have
\[
\De^n = (1^{\tensor (n-1)} \tensor \De)\De^{n-1},
\]
which follows from coassociativity of $\De$. A useful property of $\De^n$ is the following one: if $\De(X) = \sum_{(X)} X_{(1)} \tensor X_{(2)}$, then
\begin{equation} \label{eq:De n(X)}
\De^n(X) = \sum_{(X)} \De^{n-m-1}(X_{(1)}) \tensor \De^m(X_{(2)}), \qquad m=0,1,\ldots,n-1.
\end{equation}
This is easily obtained using induction.

We define for $j=1,\ldots, N$ elements $\sfY_{s,u}^{(j)}$ and $\widetilde \sfY_{t,u}^{(j)}$ in $\U_q^{\tensor N}$ by
\[
\begin{split}
\sfY_{s,u}^{(j)} &= 1^{\tensor(N-j)} \tensor \De^{j-1}(Y_{s,u}), \\
\widetilde \sfY_{t,u}^{(j)} &= \De^{j-1}(\widetilde Y_{t,u}) \tensor 1^{\tensor(N-j)},
\end{split}
\]
so essentially these are coproducts of twisted primitive elements in $\U_q^{\tensor N}$.
Similar as before we also define $\sfY_s^{(j)}=\sfY_{s,1}^{(j)}$ and $\widetilde \sfY_t^{(j)}=\widetilde \sfY_{t,1}^{(j)}$. We first show that $\sfY_{s,u}^{(j)}$, $j=1,\ldots,N$, generate a commutative subalgebra of $\U_q^{\tensor N}$, and similarly for $\widetilde{\sfY}_{t,u}^{(j)}$.
\begin{lemma}
For $j,j' = 1,\ldots,N$,
\[
\sfY_{s,u}^{(j)} \sfY_{s,u}^{(j')} = \sfY_{s,u}^{(j')} \sfY_{s,u}^{(j)} \quad \text{and} \quad \widetilde \sfY_{t,u}^{(j)}\widetilde \sfY_{t,u}^{(j')} = \widetilde \sfY_{t,u}^{(j')}\widetilde \sfY_{t,u}^{(j)}.
\]
\end{lemma}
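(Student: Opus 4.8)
The plan is to prove both identities at once; I give the argument for the elements $\sfY_{s,u}^{(j)}$ in detail, the one for $\widetilde{\sfY}_{t,u}^{(j)}$ being its mirror image. Since the asserted relation is symmetric in $j$ and $j'$, and trivial for $j=j'$, it suffices to treat $j<j'$.

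First I would make the ``inner'' piece $\De^{j-1}(Y_{s,u})$ visible inside $\sfY_{s,u}^{(j')}$. I apply \eqref{eq:De n(X)} with $n=j'-1$ and $m=j-1$ --- this is within the allowed range of $m$ precisely because $j<j'$ --- and feed in the twisted primitivity \eqref{eq:comult X}, $\De(Y_{s,u})=K^2\tensor Y_{s,u}+Y_{s,u}\tensor 1$, together with $\De^{\ell}(K^2)=(K^2)^{\tensor(\ell+1)}$ (since $K^2$ is group-like) and $\De^{\ell}(1)=1^{\tensor(\ell+1)}$. This yields
\[
\De^{j'-1}(Y_{s,u}) = (K^2)^{\tensor(j'-j)}\tensor \De^{j-1}(Y_{s,u}) + \De^{j'-j-1}(Y_{s,u})\tensor 1^{\tensor j}.
\]
Pre-tensoring with $1^{\tensor(N-j')}$ and grouping the tensor legs as $\U_q^{\tensor N}=\U_q^{\tensor(N-j)}\tensor\U_q^{\tensor j}$, I get
\[
\sfY_{s,u}^{(j')} = Z\tensor P + W\tensor 1^{\tensor j}, \qquad \sfY_{s,u}^{(j)} = 1^{\tensor(N-j)}\tensor P,
\]
where $P=\De^{j-1}(Y_{s,u})\in\U_q^{\tensor j}$, and $Z=1^{\tensor(N-j')}\tensor(K^2)^{\tensor(j'-j)}$ and $W=1^{\tensor(N-j')}\tensor\De^{j'-j-1}(Y_{s,u})$ both lie in $\U_q^{\tensor(N-j)}$.

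The key point is now purely formal: elements of $\U_q^{\tensor(N-j)}\tensor 1^{\tensor j}$ commute with elements of $1^{\tensor(N-j)}\tensor\U_q^{\tensor j}$, so $(1^{\tensor(N-j)}\tensor P)(Z\tensor P)=Z\tensor P^2=(Z\tensor P)(1^{\tensor(N-j)}\tensor P)$ and $(1^{\tensor(N-j)}\tensor P)(W\tensor 1^{\tensor j})=W\tensor P=(W\tensor 1^{\tensor j})(1^{\tensor(N-j)}\tensor P)$; summing the two shows $\sfY_{s,u}^{(j)}\sfY_{s,u}^{(j')}=\sfY_{s,u}^{(j')}\sfY_{s,u}^{(j)}$. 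For $\widetilde{\sfY}_{t,u}^{(j)}$ one runs the same computation with the legs reversed: \eqref{eq:comult X} gives $\De(\widetilde Y_{t,u})=\widetilde Y_{t,u}\tensor K^{-2}+1\tensor\widetilde Y_{t,u}$, and applying \eqref{eq:De n(X)} now with $m=j'-j-1$ gives $\De^{j'-1}(\widetilde Y_{t,u})=\De^{j-1}(\widetilde Y_{t,u})\tensor(K^{-2})^{\tensor(j'-j)}+1^{\tensor j}\tensor\De^{j'-j-1}(\widetilde Y_{t,u})$; post-tensoring with $1^{\tensor(N-j')}$ and grouping as $\U_q^{\tensor j}\tensor\U_q^{\tensor(N-j)}$ puts $\widetilde{\sfY}_{t,u}^{(j)}$ and $\widetilde{\sfY}_{t,u}^{(j')}$ in exactly the same position as above, and the disjoint-legs argument applies verbatim.

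I do not anticipate a real obstacle. The only things requiring attention are the index bookkeeping (making sure the instance of \eqref{eq:De n(X)} used has $m$ in range --- this is exactly what reducing to $j<j'$ guarantees) and collapsing the iterated coproducts of $K^{\pm2}$ to plain tensor powers; the rest is the standard observation that the two operators share the ``inner'' factor $P$ and have their remaining factors in a disjoint tensor leg, so their commutator vanishes term by term.
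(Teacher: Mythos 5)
Your proof is correct and follows essentially the same route as the paper: both decompose the larger iterated coproduct via \eqref{eq:De n(X)} so that the smaller one appears as a tensor factor, and then conclude by the disjoint-legs commutation. You merely spell out in more detail the step the paper dismisses as "clearly commutes," and your index bookkeeping (including the group-likeness of $K^{\pm 2}$) checks out.
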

\begin{proof}
We show that $\sfY_{s,u}^{(j)}$ commutes with $\sfY_{s,u}^{(j')}$, where we assume $j'<j$. Note that is suffices to show that $\De^{j-1}(Y_{s,u})$ commutes in $\U_q^{\tensor j}$ with $1^{\tensor(j-j')} \tensor \De^{j'-1}(Y_{s,u})$. Recall from \eqref{eq:comult X} that $\De(Y_{s,u})= K^2 \tensor Y_{s,u} + Y_{s,u}\tensor 1$. Then by \eqref{eq:De n(X)}
\[
\De^{j-1}(Y_{s,u}) = \De^{j-j'-1}(K^2) \tensor \De^{j'-1}(Y_{s,u}) + \De^{j-j'-1}(Y_{s,u}) \tensor \De^{j'-1}(1),
\]
and this clearly commutes with $1^{\tensor(j-j')} \tensor \De^{j'-1}(Y_{s,u})$. The proof for $\widetilde{\sfY}_{t,u}^{(j)}$ is similar.
\end{proof}

In the representation $\pi$ the elements $\sfY_s^{(j)}$ and $\widetilde \sfY_t^{(j)}$ become pairwise commuting difference operators acting on $F(\N^N)$, and we are interested in the common eigenfunctions. First we derive an explicit expression for the difference operators. The following expressions will be useful.
\begin{lemma} \label{lem:Delta j Ys}
For $j=1,\ldots,N$,
\[
\begin{split}
\De^j(Y_{s,u}) & = \sum_{n=0}^j (K^{2})^{\tensor n} \tensor Y_{s,u} \tensor 1^{\tensor(j-n)},\\
\De^j(\widetilde Y_{s,u}) & = \sum_{n=0}^j 1^{\tensor n} \tensor \widetilde Y_{s,u} \tensor (K^{-2})^{\tensor(j-n)}.
\end{split}
\]
\end{lemma}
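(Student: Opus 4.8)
The plan is to prove both identities by induction on $j$, peeling off one tensor leg at a time using the recursions for $\De^n$ recorded above together with the elementary coproduct formulas. For the first identity I would use the defining recursion $\De^j = (\De \tensor 1^{\tensor(j-1)})\De^{j-1}$, so that $\De$ acts on the leftmost tensor leg. The base case $j=1$ is precisely $\De(Y_{s,u}) = K^2 \tensor Y_{s,u} + Y_{s,u}\tensor 1$ from \eqref{eq:comult X} (and $j=0$ is trivial), and the two ingredients for the inductive step are $\De(K^2) = (K\tensor K)^2 = K^2 \tensor K^2$, which comes from \eqref{eq:comult}, and \eqref{eq:comult X} itself.

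For the inductive step, assume $\De^{j-1}(Y_{s,u}) = \sum_{n=0}^{j-1}(K^2)^{\tensor n}\tensor Y_{s,u}\tensor 1^{\tensor(j-1-n)} \in \U_q^{\tensor j}$. In each summand the leftmost factor is $K^2$ when $n\geq 1$ and $Y_{s,u}$ when $n=0$. Applying $\De \tensor 1^{\tensor(j-1)}$: on the terms with $n\geq 1$ we use $\De(K^2) = K^2\tensor K^2$, so $(K^2)^{\tensor n}\tensor Y_{s,u}\tensor 1^{\tensor(j-1-n)}$ becomes $(K^2)^{\tensor(n+1)}\tensor Y_{s,u}\tensor 1^{\tensor(j-1-n)}$, and as $n$ ranges over $1,\dots,j-1$ these produce exactly the summands of the target expression $\sum_{m=0}^{j}(K^2)^{\tensor m}\tensor Y_{s,u}\tensor 1^{\tensor(j-m)}$ with index $m=2,\dots,j$. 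On the boundary term $n=0$ we instead use \eqref{eq:comult X}, obtaining $(K^2\tensor Y_{s,u} + Y_{s,u}\tensor 1)\tensor 1^{\tensor(j-1)}$, which supplies the summands with index $m=1$ and $m=0$. Adding everything together completes the induction.

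The second identity is the mirror image: here I would instead use $\De^j = (1^{\tensor(j-1)}\tensor\De)\De^{j-1}$, which is valid by coassociativity, so that $\De$ acts on the rightmost leg, together with $\De(\widetilde Y_{s,u}) = \widetilde Y_{s,u}\tensor K^{-2} + 1\tensor\widetilde Y_{s,u}$ from \eqref{eq:comult X} and $\De(K^{-2}) = K^{-2}\tensor K^{-2}$ from \eqref{eq:comult}. The identical bookkeeping, with the boundary summand now being the one in which $\widetilde Y_{s,u}$ occupies the rightmost leg, yields $\De^j(\widetilde Y_{s,u}) = \sum_{n=0}^{j}1^{\tensor n}\tensor\widetilde Y_{s,u}\tensor(K^{-2})^{\tensor(j-n)}$. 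There is no genuine obstacle here — the argument is a routine induction — and the only point needing attention is keeping the tensor-leg indices straight and treating the single summand carrying the coproduct of $Y_{s,u}$ (resp.\ $\widetilde Y_{s,u}$) separately from the spectator $K^{\pm 2}$-legs.
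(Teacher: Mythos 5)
Your induction is correct and matches the paper's proof, which likewise obtains both identities by repeatedly applying the coproduct recursion together with $\De(K^{\pm 2})=K^{\pm 2}\tensor K^{\pm 2}$ and the formulas \eqref{eq:comult X} for $\De(Y_{s,u})$ and $\De(\widetilde Y_{s,u})$. The only cosmetic difference is that the paper phrases the step via the general identity \eqref{eq:De n(X)} rather than spelling out the leg-by-leg bookkeeping as you do.
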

\begin{proof}
This follows from repeated application of \eqref{eq:De n(X)}, using the coproducts \eqref{eq:comult X} of $Y_{s,u}$ and $\widetilde Y_{s,u}$, and $\De(K^{\pm 2})= K^{\pm 2} \tensor K^{\pm 2}$.
\end{proof}
In order to write down explicit expressions for the difference operators $\pi(\sfY_{s}^{(j)})$ and $\pi(\widetilde{\sfY}_{t}^{(j)})$ we use the following notation for elementary difference operators on $F(\N^N)$:
\[
[T_i^\pm f](\bn) = f(n_1,\ldots,n_{i-1},n_i\pm 1, n_{i+1}, \ldots, n_N).
\]
\begin{proposition}
The difference operator $\pi(\sfY_{s}^{(j)})$ is given by
\[
\pi(\sfY_{s}^{(j)}) = \frac{1}{q^{-1}-q}\sum_{i=N-j+1}^N U_i^{(j),+}(\bn)\, T_i^+ +  U_i^{(j)}\, \mathrm{Id}+ U_i^{(j),-}(\bn)\,T_i^-, \\
\]
where
\begin{align*}
U_i^{(j),+}(\bn) &= q^{\sum_{l=N-j+1}^{i-1}(k_l+2n_l)-(k_i-1)/2} (1-q^{2k_i+2n_i-2}), \\
U_i^{(j),-}(\bn) & = q^{\sum_{l=N-j+1}^{i-1}(k_l+2n_l)+(k_i-1)/2} (1-q^{2n_i+2}),\\
U_i^{(j)}(\bn) & = q^{\sum_{l=N-j+1}^{i-1}(k_l+2n_l)}(s+s^{-1})(q^{2n_i-k_i}-1),
\end{align*}
The difference operator $\pi(\widetilde \sfY_t^{(j)})$ is given by
\[
\pi(\widetilde \sfY_t^{(j)}) = \frac{1}{q-q^{-1}}\sum_{i=1}^j \widetilde U_i^{(j),+}(\bn)\, T_i^+ + \widetilde U_i^{(j)}\, \mathrm{Id} + \widetilde U_i^{(j),-}(\bn)\,T_i^-,
\]
where
\begin{align*}
\widetilde U_i^{(j),+}(\bn) & =  q^{-\sum_{l=i+1}^{j}(k_l+2n_l)-(k_i-1)/2} (1-q^{-2n_i-2}) ,\\
\widetilde U_i^{(j),-}(\bn) &= q^{-\sum_{l=i+1}^j(k_l+2n_l)+(k_i-1)/2} (1-q^{-2k_i-2n_i+2}), \\
\widetilde U_i^{(j)}(\bn) & = q^{-\sum_{l=1}^i(k_l+2n_l)}(t+t^{-1})(q^{-2n_i+k_i}-1).
\end{align*}
\end{proposition}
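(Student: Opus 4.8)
The plan is to reduce everything to Lemma~\ref{lem:Delta j Ys} together with the explicit one-variable actions \eqref{eq:action of Y} and \eqref{eq:action of tildeY}, applied slot by slot in the tensor product. First I would rewrite the iterated coproduct: applying Lemma~\ref{lem:Delta j Ys} with $j$ replaced by $j-1$ and $u=1$ gives
\[
\De^{j-1}(Y_s) = \sum_{n=0}^{j-1} (K^2)^{\tensor n}\tensor Y_s \tensor 1^{\tensor(j-1-n)},
\]
hence
\[
\sfY_s^{(j)} = 1^{\tensor(N-j)}\tensor\De^{j-1}(Y_s) = \sum_{n=0}^{j-1} 1^{\tensor(N-j)}\tensor (K^2)^{\tensor n}\tensor Y_s\tensor 1^{\tensor(j-1-n)}.
\]
Each summand is an elementary tensor in which the factor $Y_s$ occupies slot $i:=N-j+n+1$, the factors $K^2$ occupy slots $N-j+1,\dots,i-1$, and every other slot carries $1$.

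Next I would apply $\pi=\pi_{k_1}\tensor\cdots\tensor\pi_{k_N}$ termwise, which is legitimate since $\pi$ is an algebra homomorphism acting independently on each tensor slot. By \eqref{eq:representation pi} the operator $\pi_{k_l}(K^2)$ is multiplication by $q^{k_l+2n_l}$, so the $K^2$-factors in slots $N-j+1,\dots,i-1$ contribute the scalar $q^{\sum_{l=N-j+1}^{i-1}(k_l+2n_l)}$ evaluated at the original variables, while $\pi_{k_i}(Y_s)$ acts on slot $i$ as the three-term difference operator \eqref{eq:action of Y}, producing the shifts $T_i^-$, $\mathrm{Id}$, $T_i^+$. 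Since the $K^2$-factors and the $Y_s$-factor act on disjoint slots they do not interfere, so summing over $n=0,\dots,j-1$, i.e.\ over $i=N-j+1,\dots,N$, and dividing by $q^{-1}-q$ gives exactly the claimed operator, with $U_i^{(j),\pm}(\bn)$ and $U_i^{(j)}(\bn)$ obtained by multiplying the three coefficients in \eqref{eq:action of Y} (with $k=k_i$, $n=n_i$) by the prefactor $q^{\sum_{l=N-j+1}^{i-1}(k_l+2n_l)}$.

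The formula for $\pi(\widetilde\sfY_t^{(j)})$ is obtained in the mirror-image way. Using the second identity of Lemma~\ref{lem:Delta j Ys} (with $j\to j-1$, $u=1$), $\De^{j-1}(\widetilde Y_t)=\sum_{n=0}^{j-1}1^{\tensor n}\tensor\widetilde Y_t\tensor(K^{-2})^{\tensor(j-1-n)}$, so $\widetilde\sfY_t^{(j)}=\De^{j-1}(\widetilde Y_t)\tensor 1^{\tensor(N-j)}$ is a sum of elementary tensors in which $\widetilde Y_t$ occupies slot $i:=n+1$ (with $i=1,\dots,j$), the factors $K^{-2}$ occupy slots $i+1,\dots,j$, and the remaining slots carry $1$. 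Applying $\pi$ termwise, using that $\pi_{k_l}(K^{-2})$ is multiplication by $q^{-k_l-2n_l}$ and that $\pi_{k_i}(\widetilde Y_t)$ acts on slot $i$ by \eqref{eq:action of tildeY}, and dividing by $q-q^{-1}$, produces the stated operator with $\widetilde U_i^{(j),\pm}(\bn)$, $\widetilde U_i^{(j)}(\bn)$ read off as before.

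I expect no genuine obstacle here: the argument is a mechanical slot-by-slot computation on the domain $F_0(\N^N)$, so no analytic subtlety arises. The only point requiring care is the tensor-slot bookkeeping — for $\sfY_s^{(j)}$ the nontrivial part of the coproduct sits at the right end and the accumulated $K^2$-exponent runs over the slots to the \emph{left} of the active slot $i$, whereas for $\widetilde\sfY_t^{(j)}$ it sits at the left end and the $K^{-2}$-exponent runs over the slots to the \emph{right} of $i$; correspondingly the active index runs over $i=N-j+1,\dots,N$ in the first case and $i=1,\dots,j$ in the second. Keeping these ranges, and the orientation of the coproducts \eqref{eq:comult X}, straight is essentially the whole content of the proof.
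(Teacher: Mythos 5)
Your proof is correct and is precisely the argument the paper gives: the paper's proof is a one-line citation of Lemma \ref{lem:Delta j Ys} together with the actions \eqref{eq:action of Y}, \eqref{eq:action of tildeY} and \eqref{eq:representation pi}, which you have simply written out in full with the correct slot bookkeeping. As a minor aside, the slot-by-slot computation you describe actually yields $U_i^{(j)}(\bn)=q^{\sum_{l=N-j+1}^{i-1}(k_l+2n_l)}(s+s^{-1})(q^{k_i+2n_i}-1)$ and the analogous expression for $\widetilde U_i^{(j)}(\bn)$ with prefactor $q^{-\sum_{l=i+1}^{j}(k_l+2n_l)}$, so the deviations in the stated identity-term coefficients appear to be typographical slips in the proposition rather than any gap in your argument.
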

\begin{proof}
This follows from Lemma \ref{lem:Delta j Ys}, using the actions of $Y_{s}$, $\widetilde Y_{s}$ and $K^{\pm 2}$, see \eqref{eq:action of Y}, \eqref{eq:action of tildeY} and \eqref{eq:representation pi}.
\end{proof}

\subsection{Eigenfunctions}
We write $\bx=(x_1,\ldots,x_N)$, $\by=(y_1,\ldots,y_N)$ and we define $x_{N+1} = s$ and $y_0=t$. We define multivariate analogs of the Al-Salam--Chihara polynomials $v_{x,s}(n)$ in base $q^2$, see \eqref{eq:definition vxs}, and multivariate analogs of the Al-Salam--Chihara polynomials $\widetilde v_{y,t}(n)$ in base $q^{-2}$, see \eqref{eq:definition tilde vxs}, by
\begin{equation} \label{eq:multivariate ASC}
\begin{split}
v_\bx(\bn) &= v_{\bx,s,\bk}(\bn) = \prod_{j=1}^{N} v_{x_j,x_{j+1},k_j}(n_j) \\
 &= \prod_{j=1}^N \left(\frac{q^{-(3k_j-1)/2}}{x_{j+1}}\right)^{n_j} \frac{(q^{2k_j};q^2)_{n_j} }{ (q^2;q^2)_{n_j} }Q_{n_j}(x_j;q^{k_j}x_{j+1}, q^{k_j}/x_{j+1}\mvert q^2), \\
\widetilde v_{\by}(\bn) &= \widetilde v_{\by,t,\bk}(\bn) = \prod_{j=1}^N \widetilde v_{y_j,y_{j-1},k_j}(n_j) \\
&= \prod_{j=1}^N \left( \frac{q^{(3k_j-1)/2}}{y_{j-1}}\right)^{n_j} \frac{(q^{-2k_j};q^{-2})_{n_j} }{ (q^{-2};q^{-2})_{n_j} }Q_{n_j}(y_j;q^{-k}y_{j-1}, q^{-k}/y_{j-1}\mvert q^{-2}).
\end{split}
\end{equation}
Recall from Remark \ref{remark} that $v_{x_j,x_{j+1},k_j}(n_j)$ is a polynomial in $x_j+x_j^{-1}$ and in $x_{j+1}+x_{j+1}^{-1}$, and a similar observation can be made for $\widetilde v_{y_j,y_{j-1},k_j}(n_j)$. So $v_{\bx}(\bn)$ and $\widetilde v_{\by}(\bn)$ are polynomials in $N$ variables. Recall that the univariate Al-Salam--Chihara polynomials $v_{x,s}(n)$ and $\widetilde v_{x,s}(n)$ can be obtained from each other by replacing $q$ by $q^{-1}$. There is a similar relation for their multivariate analogs, which follows directly from \eqref{eq:multivariate ASC}.
\begin{lemma} \label{lem:vq = tilde vq-1}
The multivariate Al-Salam--Chihara polynomials $v_{\bx,s,\bk,q}(\bn)$ and $\widetilde v_{\bx,s,\bk,q}(\bn)$ are related by
\[
v_{\bx,s,\bk,q^{-1}}(\bn) = \widetilde v_{\hat{\bx},s,\hat \bk,q}(\hat\bn).
\]
\end{lemma}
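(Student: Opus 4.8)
This is essentially a bookkeeping identity. The plan is to unfold both sides into products of the univariate factors appearing in \eqref{eq:multivariate ASC} and to check that they are products of the same scalars. The one substantive ingredient is the univariate relation recorded right after \eqref{eq:definition tilde vxs}, namely $\widetilde v_{y,t,k,q}(n) = v_{y,t,k,q^{-1}}(n)$, which I take as given.

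First I would expand the left-hand side, using the convention $x_{N+1}=s$:
\[
v_{\bx,s,\bk,q^{-1}}(\bn) = \prod_{j=1}^N v_{x_j,x_{j+1},k_j,q^{-1}}(n_j) = \prod_{j=1}^N \widetilde v_{x_j,x_{j+1},k_j,q}(n_j),
\]
the second equality being the univariate relation applied to each factor. Then I would expand the right-hand side from the definition of $\widetilde v_{\by,t,\bk}$ in \eqref{eq:multivariate ASC}, with $\by=\hat\bx$, with $s$ playing the role of $t$ (so that $(\hat\bx)_0 = s$), with $\bk=\hat\bk$ and $\bn=\hat\bn$. Since $(\hat\bx)_j = x_{N+1-j}$ for $j\ge 1$ and $(\hat\bx)_0 = s = x_{N+1}$, one has $(\hat\bx)_{j-1} = x_{N+2-j}$ uniformly in $j$, so
\[
\widetilde v_{\hat\bx,s,\hat\bk,q}(\hat\bn) = \prod_{j=1}^N \widetilde v_{x_{N+1-j},\,x_{N+2-j},\,k_{N+1-j},\,q}(n_{N+1-j}) = \prod_{i=1}^N \widetilde v_{x_i,\,x_{i+1},\,k_i,\,q}(n_i)
\]
after substituting $i=N+1-j$. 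This is exactly the product obtained for the left-hand side, and since the factors are numbers the lemma follows.

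The only point requiring care is keeping the endpoint conventions aligned: on the left the chain of Al-Salam--Chihara factors threads $x_1,\ldots,x_N$ and closes on $s$ occupying the $x_{N+1}$ slot, whereas on the right the $\widetilde v$-product runs inward starting from the slot occupied by $t$, and one must confirm that reversing the tuples carries the $x_{N+1}=s$ endpoint precisely onto the $y_0=s$ endpoint. With $x_{N+1}=s$ used consistently on both sides this is automatic.
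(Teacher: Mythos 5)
Your proof is correct and is exactly the verification the paper has in mind: the paper offers no written proof beyond asserting that the identity "follows directly from \eqref{eq:multivariate ASC}", and your unfolding of both sides into the univariate factors, together with the relation $\widetilde v_{y,t,k,q}(n)=v_{y,t,k,q^{-1}}(n)$ and the index reversal $i=N+1-j$, is that direct verification, with the endpoint bookkeeping ($x_{N+1}=s$ landing on the $y_0$ slot) handled correctly.
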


We show that the multivariate Al-Salam--Chihara polynomials are eigenfunctions of the difference operators $\pi(\sfY_{s}^{(j)})$ and $\pi(\widetilde \sfY_t^{(j)})$. For $N=2,3$ this is proved in \cite[Section 4]{KvdJ}. Slightly more general, we will determine eigenfunction of $\pi(\sfY_{s,u}^{(j)})$ and $\pi(\widetilde{\sfY}_{t,u}^{(j)})$. To formulate the result we need the multiplication operator $M_u$ on $F(\N^N)$ defined by $[M_u f](\bn) = u^{n_1+\ldots+n_N} f(\bn)$.
\begin{proposition} \label{prop:multivariate eigenfunctions}
For $j=1,\ldots,N$ and $u \in \T$,
\[
\begin{split}
[\pi(\sfY_{s,u}^{(j)}) M_u v_{\bx}](\bn) &= \la_{x_{N-j+1},s}\, M_u v_{\bx}(\bn),\\
[\pi(\widetilde \sfY_{t,u}^{(j)}) M_u \widetilde v_{\by}](\bn) &= \la_{t,y_j}\, M_u \widetilde v_{\by}(\bn).
\end{split}
\]
\end{proposition}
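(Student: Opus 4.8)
The plan is to prove both eigenvalue equations by induction on $j$, reducing each inductive step to the univariate statements of Lemmas \ref{lem:eigenfunction Ysu} and \ref{lem:eigenfunction tildeYtu} together with the trivial observation that, by \eqref{eq:def Y tildeY},
\[
Y_{s,u}-Y_{s',u}=(\mu_s-\mu_{s'})(K^2-1),\qquad \widetilde Y_{t,u}-\widetilde Y_{t',u}=-(\mu_t-\mu_{t'})(K^{-2}-1).
\]
Throughout, for an index $i$ I write $\pi(X)_{(i)}$ for the operator on $F(\N^N)$ that lets $\pi_{k_i}(X)$ act on the $i$-th variable only; operators $\pi(X)_{(i)}$ and $\pi(X')_{(i')}$ with $i\neq i'$ commute, and $\pi(K^{\pm 2})_{(i)}$ acts diagonally by $q^{\pm(k_i+2n_i)}$.

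For $\sfY_{s,u}^{(j)}$, apply \eqref{eq:De n(X)} with $m=j-2$ to $\De(Y_{s,u})=K^2\tensor Y_{s,u}+Y_{s,u}\tensor 1$ from \eqref{eq:comult X} to get $\De^{j-1}(Y_{s,u})=K^2\tensor\De^{j-2}(Y_{s,u})+Y_{s,u}\tensor 1^{\tensor(j-1)}$. Since $\sfY_{s,u}^{(j-1)}=1^{\tensor(N-j+1)}\tensor\De^{j-2}(Y_{s,u})$ and the legs involved are disjoint, this gives, with $i=N-j+1$,
\[
\sfY_{s,u}^{(j)}=\big(1^{\tensor(N-j)}\tensor K^2\tensor 1^{\tensor(j-1)}\big)\,\sfY_{s,u}^{(j-1)}+1^{\tensor(N-j)}\tensor Y_{s,u}\tensor 1^{\tensor(j-1)}.
\]
Now evaluate $\pi$ of this on $M_uv_\bx$, using that $v_\bx(\bn)=\prod_l v_{x_l,x_{l+1},k_l}(n_l)$ and that $M_u$ factors over the variables. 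By the induction hypothesis for $j-1$, $\pi(\sfY_{s,u}^{(j-1)})M_uv_\bx=\la_{x_{i+1},s}M_uv_\bx$ (note $i+1=N-(j-1)+1$), and then $\pi(K^2)_{(i)}$ multiplies by $q^{k_i+2n_i}$. The remaining term $\pi(Y_{s,u})_{(i)}$ acts only on the factor $v_{x_i,x_{i+1},k_i}$, which by Lemma \ref{lem:eigenfunction Ysu} is an eigenfunction of $\pi_{k_i}(Y_{x_{i+1},u})$ with eigenvalue $\la_{x_i,x_{i+1}}$; hence by the parameter-shift identity,
\[
\pi_{k_i}(Y_{s,u})\,M_u v_{x_i,x_{i+1},k_i}(n_i)=\big(\la_{x_i,x_{i+1}}+(\mu_s-\mu_{x_{i+1}})(q^{k_i+2n_i}-1)\big)\,M_u v_{x_i,x_{i+1},k_i}(n_i).
\]
Adding the two contributions, $[\pi(\sfY_{s,u}^{(j)})M_uv_\bx](\bn)$ equals $M_uv_\bx(\bn)$ times
\[
q^{k_i+2n_i}\la_{x_{i+1},s}+\la_{x_i,x_{i+1}}+(\mu_s-\mu_{x_{i+1}})(q^{k_i+2n_i}-1).
\]
Because $\la_{x_{i+1},s}=\mu_{x_{i+1}}-\mu_s=-(\mu_s-\mu_{x_{i+1}})$ the $q^{k_i+2n_i}$-terms cancel, leaving $\la_{x_i,x_{i+1}}+(\mu_{x_{i+1}}-\mu_s)=\mu_{x_i}-\mu_s=\la_{x_i,s}=\la_{x_{N-j+1},s}$, as claimed. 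The base case $j=1$ is immediate: $\sfY_{s,u}^{(1)}=1^{\tensor(N-1)}\tensor Y_{s,u}$ acts only on the last variable, which carries $v_{x_N,s,k_N}$ because $x_{N+1}=s$, and Lemma \ref{lem:eigenfunction Ysu} gives eigenvalue $\la_{x_N,s}$.

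The $\widetilde\sfY_{t,u}^{(j)}$-statement is entirely parallel: \eqref{eq:De n(X)} with $m=0$ applied to $\De(\widetilde Y_{t,u})=\widetilde Y_{t,u}\tensor K^{-2}+1\tensor\widetilde Y_{t,u}$ yields $\De^{j-1}(\widetilde Y_{t,u})=\De^{j-2}(\widetilde Y_{t,u})\tensor K^{-2}+1^{\tensor(j-1)}\tensor\widetilde Y_{t,u}$, hence $\widetilde\sfY_{t,u}^{(j)}=\big(1^{\tensor(j-1)}\tensor K^{-2}\tensor 1^{\tensor(N-j)}\big)\,\widetilde\sfY_{t,u}^{(j-1)}+1^{\tensor(j-1)}\tensor\widetilde Y_{t,u}\tensor 1^{\tensor(N-j)}$; evaluating $\pi$ on $M_u\widetilde v_\by$, the $j$-th variable carries $\widetilde v_{y_j,y_{j-1},k_j}$ (with $y_0=t$ in the base case), which by Lemma \ref{lem:eigenfunction tildeYtu} is an eigenfunction of $\pi_{k_j}(\widetilde Y_{y_{j-1},u})$ with eigenvalue $\la_{y_{j-1},y_j}$, and the same telescoping --- using the induction hypothesis $\pi(\widetilde\sfY_{t,u}^{(j-1)})M_u\widetilde v_\by=\la_{t,y_{j-1}}M_u\widetilde v_\by$, the diagonal action $q^{-k_j-2n_j}$ of $\pi_{k_j}(K^{-2})$, and the shift $\widetilde Y_{t,u}-\widetilde Y_{y_{j-1},u}=-(\mu_t-\mu_{y_{j-1}})(K^{-2}-1)$ --- produces the eigenvalue $\la_{t,y_j}$. (One could also deduce the second statement from the first via Lemma \ref{lem:vq = tilde vq-1} and the $q\leftrightarrow q^{-1}$/variable-reversal symmetry.) The only delicate point in the whole argument is bookkeeping the discrepancy between the fixed parameter $s$ (resp.\ $t$) in the operator and the running parameter $x_{i+1}$ (resp.\ $y_{j-1}$) in the $i$-th (resp.\ $j$-th) Al-Salam--Chihara factor; this is precisely what the identity $Y_{s,u}-Y_{s',u}=(\mu_s-\mu_{s'})(K^2-1)$ resolves, and it is exactly this discrepancy that forces the $q$-power contributions to cancel.
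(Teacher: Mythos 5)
Your proposal is correct and follows essentially the same route as the paper: induction on $j$ via the coproduct splitting $\De^{j-1}(\widetilde Y_{t,u})=\De^{j-2}(\widetilde Y_{t,u})\tensor K^{-2}+1^{\tensor(j-1)}\tensor\widetilde Y_{t,u}$ (and its mirror for $Y_{s,u}$), with the parameter-shift identity $\widetilde Y_{t,u}-\widetilde Y_{y_{j-1},u}=-(\mu_t-\mu_{y_{j-1}})(K^{-2}-1)$ playing exactly the role of the paper's rewriting $\la_{t,y_j}K^{-2}+\widetilde Y_t=\widetilde Y_{y_j}+(\mu_t-\mu_{y_j})1$, and the same telescoping of the $\mu$'s. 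The only cosmetic differences are that you carry $u$ throughout instead of first setting $u=1$ and conjugating by $M_u$, and that you write out the $Y_{s,u}$ case in detail where the paper details the $\widetilde Y_{t,u}$ case.
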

\begin{proof}
We first set $u=1$ and prove the result for $\widetilde \sfY^{(j)}_{t}=\widetilde \sfY^{(j)}_{t,1}$ using induction on $j$. Let us define for $j=1,\ldots,N$, $\pi_{\bk_j} = \pi_{k_1} \tensor \cdots \tensor \pi_{k_j}$ and
\[
\widetilde v_{\by_j}(\bn_j) = \prod_{i=1}^j \widetilde v_{y_{i},y_{i-1},k_i}(n_i).
\]
Note that $\widetilde v_{\by_{j+1}}(\bn_{j+1}) = \widetilde v_{\by_j}(\bn_j) v_{y_{j+1},y_j,k_{j+1}}(n_{j+1})$. To prove the result for $u=1$ it suffices to prove that
\begin{equation} \label{eq:induction hyp}
\big[\pi_{\bk_j}(\De^{j-1}(\widetilde Y_{t})) \widetilde v_{\by_j}\big](\bn_j) = \la_{t,y_j} \,\widetilde v_{\by_j}(\bn_j),
\end{equation}
for $j=1,\ldots,N$. Then the result follows from the fact that $\widetilde \sfY_t^{(j)}$ acts as $\De^{j-1}(\widetilde Y_t)$ on the first $j$ factors of $F_0(\N)^{\tensor N}$ and as the identity on the other factors.

For $j=1$ identity \eqref{eq:induction hyp} follows directly from Proposition \ref{prop:tilde Lambda}. Assuming \eqref{eq:induction hyp} holds for some $j$, and using
\[
\De^{j}(\widetilde Y_t) = \De^{j-1}(1) \tensor \widetilde Y_t+\De^{j-1}(\widetilde Y_t) \tensor K^{-2},
\]
we find
\[
\begin{split}
\big[&\pi_{\bk_{j+1}}(\De^{j}(\widetilde Y_t)) \widetilde v_{\by_{j+1}}\big](\bn_{j+1}) \\ &= \widetilde v_{\by_j}(\bn_j)  \big[\pi_{k_{j+1}}(\widetilde Y_t) \widetilde v_{y_{j+1},y_{j},k_j}\big](n_{j+1}) + \big[\pi_{\bk_j}(\De^{j-1}(\widetilde Y_t))\widetilde v_{\by_j}\big](\bn_j) \big[\pi_{k_{j+1}}(K^{-2}) \widetilde v_{y_{j+1},y_{j},k_j}\big](n_{j+1})  \\
& = \widetilde v_{\by_j}(\bn_j)  \big[\pi_{k_{j+1}}(\la_{t,y_{j}} K^{-2} + \widetilde Y_t) \widetilde v_{y_{j+1},y_{j},k_j}\big](n_{j+1}).
\end{split}
\]
From $\la_{t,y_{j}} = \mu_{t}-\mu_{y_{j}}$ we find
\[
\la_{t,y_{j}}K^{-2}  + \widetilde Y_t = q^{-\hf} EK - q^{\hf} FK - \mu_{y_{j}} K^{-2} + \mu_t1 = \widetilde Y_{y_{j}}+ (\mu_t-\mu_{y_j})1.
\]
Then using $\pi_k(\widetilde Y_{y_j}) \widetilde v_{y_{j+1},y_j} = \la_{y_j,y_{j+1}} \widetilde v_{y_{j+1},y_j}$, we obtain \eqref{eq:induction hyp} for $j+1$. By induction it follows that $\pi(\widetilde \sfY_t^{(j)}) \widetilde v_\by = \la_{t,y_j} \widetilde v_{\by}$. Finally, from the identities $\pi_k(\widetilde Y_{t,u}) M_u = M_u \pi_k(\widetilde Y_t)$ and $\pi_k(K^{-2})M_u=M_u\pi(K^{-2})$ on $F(\N)$, and the second identity in Lemma \ref{lem:Delta j Ys}, it follows that $\pi(\widetilde \sfY_{t,u}^{(j)}) M_u =M_u \pi(\widetilde \sfY_t^{(j)})$ on $F(\N^N)$, which proves the result for $\widetilde \sfY_{t,u}^{(j)}$. For $\sfY^{(j)}_{s,u}$ the proof runs along the same lines.
\end{proof}

Next we define the corresponding Hilbert spaces. We define the weight function $w=w_{\bk,s}$ on $\T^N$ by
\begin{equation} \label{eq:def multivariate w}
w(\bx) = \prod_{j=1}^N w_{k_j,x_{j+1}}(x_{j}) = C_{\bk} \prod_{j=1}^N \frac{ (x_j^{\pm 2};q^2)_\infty}{(q^{k_j}x_{j+1}^{\pm 1} x_j^{\pm 1};q^2)_\infty},
\end{equation}
where $C_\bk$ is the $\bx$-independent constant given by $C_{\bk} = \prod_{j=1}^N (q^2,q^{2k_j};q^2)_\infty$. The Hilbert space $\mathcal H=\mathcal H_{\bk,s}$ consists of functions on $\T^N$ which are invariant under $x_j \leftrightarrow x_j^{-1}$ for $j=1,\ldots,N$, and which have finite norm with respect to the inner product
\[
\langle f,g \rangle_{\mathcal H} = \int_{\T^N} f(\bx) \overline{g(\bx)} \, w(\bx)\, \frac{d\bx}{\bx},
\]
where $\frac{d\bx}{\bx} = \frac{1}{(4\pi i)^N} \frac{dx_1}{x_1} \cdots \frac{dx_N}{x_N}$. The other Hilbert space consist functions on
the set
\begin{equation} \label{eq:def set S}
S=S_{\bk,t,q} = \Big\{ tq^{-\Sigma(\bk)-2\Sigma(\bm)} \mid \bm \in \N^N \Big\}
\end{equation}
Let $\widetilde w= \widetilde w_{\bk,t}$ be the weight function on $S$ given by
\[
\begin{split}
\widetilde w(\by) = \prod_{j=1}^N &\widetilde w_{k_j,y_{j-1}}(y_j) =
 \prod_{j=1}^N \left(\frac{ q^{2K_{j-1}+4M_{j-1}+2m_j}}{t^2} \right)^{m_j}
 \frac{1-q^{2K_j+4M_j}/t^2 }{ 1-q^{2K_j+4M_{j-1}}/t^2} \\
  & \qquad \times \frac{ (q^{2K_{j}+4M_{j-1}}/t^2,q^{2k_j};q^2)_{m_j} (q^{2K_{j-1}+4M_{j-1}+2m_j+2}/t^2;q^2)_\infty }{(q^2;q^2)_{m_j} (q^{2K_{j}+4M_{j-1}+2}/t^2;q^2)_\infty},
\end{split}
\]
where $\by=tq^{-\Sigma(\bk)-2\Sigma(\bm)}$, $M_j=\Sigma(\bm)_j = \sum_{i=1}^j m_i$, $K_j = \Sigma(\bk)_j = \sum_{i=1}^j k_i$ and $M_0=0=K_0$. The Hilbert space $\widetilde{\mathcal H}= \widetilde{\mathcal H}_{\bk,t}$ consists of functions on $S$ which have finite norm with respect to the inner product
\[
\langle f,g\rangle_{\widetilde{\mathcal H}} = \sum_{\by \in S} f(\by) \overline{g(\by)}\, \widetilde w(\by).
\]

From the orthogonality relations of the univariate polynomials we obtain the following orthogonality relations for the multivariate ones.
\begin{lemma} \label{lem:orth multivariate} \*
\begin{enumerate}[(i)]
\item The set $\{v_{\bullet}(\bn) \mid \bn \in \N^N\}$ is an orthogonal basis for $\mathcal H$ with orthogonality relations
\[
\langle v_{\bullet}(\bn),  v_{\bullet}(\bn')\rangle_{\mathcal H} =  \frac{\de_{\bn,\bn'}}{ \om(\bn)}.
\]
\item The set $\{\widetilde v_{\bullet}(\bn) \mid \bn \in \N^N\}$ is an orthogonal basis for $\widetilde {\mathcal H}$ with orthogonality relations
\[
\langle \widetilde v_{\bullet}(\bn), \widetilde v_{\bullet}(\bn')\rangle_{\widetilde {\mathcal H}} = \frac{\de_{\bn,\bn'}}{\om(\bn)}.
\]
\item The set $\{\widetilde v_\by \mid \by \in S\}$ is an orthogonal basis for $H$ with orthogonality relations
\[
\langle \widetilde v_{\by}, \widetilde v_{\by'} \rangle_H = \frac{ \de_{\by,\by'} }{\widetilde w(\by)}.
\]
\end{enumerate}
\end{lemma}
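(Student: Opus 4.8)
The plan is to deduce all three parts from their univariate counterparts in Section~\ref{sect:Al-Salam--Chihara} --- the orthogonality relations \eqref{eq:norm v}, \eqref{eq:norm tilde v}, \eqref{eq:dual orthogality tilde v} and the completeness statements accompanying them --- exploiting that $v_\bx(\bn)$, $\widetilde v_\by(\bn)$, the weights $w$, $\widetilde w$, and the spaces $\mathcal H$, $\widetilde{\mathcal H}$, $H$ are all built as products of univariate objects. The one structural feature that makes this nesting go through is the remark made after \eqref{eq:norm v} and \eqref{eq:norm tilde v}: the univariate squared norm $\om_k(n)^{-1}$ does not depend on the deformation parameter ($s$, respectively $t$).

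For the \emph{orthogonality relations} I would evaluate the inner products by integrating (respectively summing) out the variables one at a time. In (i) the variable $x_1$ occurs only in $v_{x_1,x_2,k_1}(n_1)\overline{v_{x_1,x_2,k_1}(n_1')}\,w_{k_1,x_2}(x_1)$; since $x_2\in\T$, relation \eqref{eq:norm v} with $s$ replaced by $x_2$ integrates this to $\de_{n_1,n_1'}/\om_{k_1}(n_1)$, which --- being independent of $x_2$ --- factors out of the remaining integral, leaving exactly the $(N-1)$-variable instance; iterating gives $\prod_{j=1}^N \de_{n_j,n_j'}/\om_{k_j}(n_j)=\de_{\bn,\bn'}/\om(\bn)$. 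In (ii) the same scheme applies with the sum over $\by\in S$ in place of the torus integral, peeling off $y_N\in S_{k_N,y_{N-1},q}$ first and using \eqref{eq:norm tilde v}. In (iii) the sum over $\bn\in\N^N$ already factors (each $\widetilde v_\by$ lies in $H$, with squared norm $\widetilde w(\by)^{-1}$), so $\langle \widetilde v_\by,\widetilde v_{\by'}\rangle_H=\prod_{j=1}^N\big(\sum_{n_j}\widetilde v_{y_j,y_{j-1},k_j}(n_j)\overline{\widetilde v_{y_j',y_{j-1}',k_j}(n_j)}\,\om_{k_j}(n_j)\big)$; the $j$-th factor is given by the univariate dual relation \eqref{eq:dual orthogality tilde v} whenever $y_{j-1}=y_{j-1}'$, and inspecting the least index $j_0$ at which $\by$ and $\by'$ differ shows this holds up to $j_0$ (using $y_j=y_{j-1}q^{-k_j-2m_j}\in S_{k_j,y_{j-1},q}$), so the product equals $\prod_j \widetilde w_{k_j,y_{j-1}}(y_j)^{-1}=\widetilde w(\by)^{-1}$ when $\by=\by'$ and vanishes otherwise because the $j_0$-th factor does.

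\emph{Completeness} is where the coupling between neighbouring indices needs care, since one cannot merely tensor the univariate isometries. For (iii) I would induct on $N$, the base case $N=1$ being the univariate result preceding Proposition~\ref{prop:tilde Lambda}: writing $\widetilde v_\by(\bn)=\widetilde v_{y_1,t,k_1}(n_1)\,\widetilde v^{(N-1)}_{(y_2,\ldots,y_N)}(n_2,\ldots,n_N)$ and noting that for fixed $y_1\in S_{k_1,t,q}$ the tuple $(y_2,\ldots,y_N)$ runs over $S^{(N-1)}_{(k_2,\ldots,k_N),y_1,q}$, the induction hypothesis makes $\{\widetilde v_\by\mid\by\in S,\ y_1\text{ fixed}\}$ span a dense subspace of $\mathbb C\,\widetilde v_{y_1,t,k_1}(\cdot)\tensor(H_{k_2}\tensor\cdots\tensor H_{k_N})$; letting $y_1$ range over $S_{k_1,t,q}$ and invoking univariate completeness of $\{\widetilde v_{y_1,t,k_1}(\cdot)\}$ in $H_{k_1}$ gives density in $H$. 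Part (ii) is then immediate: by (iii) the map $(\widetilde\La_{\bk,t}f)(\by)=\langle f,\widetilde v_\by\rangle_H$ is a unitary $H\to\widetilde{\mathcal H}$ carrying the orthogonal basis $\{\de_\bm\}$, $\de_\bm(\bn)=\de_{\bm,\bn}/\om(\bn)$, of $H$ onto $\{\widetilde v_\bullet(\bm)\}$, which is thus an orthogonal basis of $\widetilde{\mathcal H}$. For (i) I would instead show that $\{v_\bullet(\bn)\mid\bn\in\N^N\}$ spans all of $\mathcal P$ and that $\mathcal P$ is dense in $\mathcal H$: the first claim follows by induction on $N$ from the fact that, by Remark~\ref{remark}(i), each $v_{x_j,x_{j+1},k_j}(n_j)$ is polynomial of degree $n_j$ in both $x_j+x_j^{-1}$ and $x_{j+1}+x_{j+1}^{-1}$, while a short computation with \eqref{eq:definition vxs} shows that the leading coefficient of $v_{x_1,x_2,k_1}(n_1)$ in $x_1+x_1^{-1}$ is a nonzero constant independent of $x_2$, so $\{v_{x_1,x_2,k_1}(n_1)\}$ differs by a triangular invertible change of basis over the ring of symmetric Laurent polynomials in $x_2,\ldots,x_N$ from the powers $(x_1+x_1^{-1})^{n_1}$, reducing to the $(N-1)$-variable case; the second claim holds because $w$ is nonnegative ($\overline{q^{k_j}x_{j+1}}=q^{k_j}/x_{j+1}$ for $x_{j+1}\in\T$) and integrable on $\T^N$ with total mass $1$ --- by the same iteration as above, since $v_\bullet(\bzero)\equiv1$ and $\om(\bzero)=1$ --- so symmetric Laurent polynomials are dense in $\mathcal H$ by Stone--Weierstrass.

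The main obstacle is therefore the completeness in this nested setting: the deformation parameter of the $j$-th univariate factor is a variable (or spectral point) belonging to a neighbouring factor, so the argument cannot be a plain tensor product of univariate unitaries; what rescues the induction is precisely the parameter-independence of the univariate norms, supplemented for part (i) by the triangularity of the Al-Salam--Chihara polynomials $v_{x,s}(n)$ in $x+x^{-1}$.
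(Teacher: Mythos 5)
Your proof is correct and follows the same route as the paper: the orthogonality relations are obtained by iterated integration (respectively summation) one variable at a time, using that the univariate squared norms are independent of the neighbouring parameter, and part (iii) by taking products of the univariate dual relations \eqref{eq:dual orthogality tilde v}. The paper's proof is a three-line version of exactly this; your additional inductive arguments for completeness (triangularity in $x_1+x_1^{-1}$ for (i), nesting of the sets $S$ for (iii)), which the paper leaves implicit, are also sound.
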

\begin{proof}
Statement (i) follows from \eqref{eq:norm v} by writing the inner product as an iterated integral $\int_{x_1} \int_{x_2} \cdots \int_{x_N}$ and using that the squared norms of the univariate Al-Salam--Chihara polynomials $v_{x_j,x_{j+1}}(n_j)$ is independent of $x_{j+1}$.
Statement (ii) is proved in a similar way. Statement (iii) is obtained directly from \eqref{eq:dual orthogality tilde v} by taking products.
\end{proof}
\begin{remark}
The orthogonality in statement (iii) of the functions $\widetilde v_{\by}$ with respect to the inner product on $H$ can be considered as orthogonality relations for multivariate little $q$-Jacobi polynomials.
\end{remark}

From Proposition \ref{prop:multivariate eigenfunctions} and Lemma \ref{lem:orth multivariate} we obtain the multivariate analogue of Propositions \ref{prop:Lambda} and \ref{prop:tilde Lambda}.
\begin{proposition} \label{prop:La(N)}\*
\begin{enumerate}[(i)]
\item Define $\La=\La_{\bk,s}:F_0(\N^N) \to \mathcal P$ by
\[
(\La f)(\bx) = \langle f, v_{\bx} \rangle_{H},
\]
then $\La$ intertwines $\pi(\sfY_{s}^{(j)})$, $j=1,\ldots,N$, with multiplication by $\la_{x_{N+1-j},s}$, and extends to a unitary operator $H \to \mathcal H$.
\item Define $\widetilde \La=\widetilde \La_{\bk,t}:F_0(\N^N) \to \mathcal P$ by
\[
(\widetilde\La f)(\by) = \langle f, \widetilde v_{\by} \rangle_{H},
\]
then $\widetilde\La$ intertwines $\pi(\widetilde\sfY_t^{(j)})$, $j=1,\ldots,N$, with multiplication by $\la_{t,y_j}$, and extends to a unitary operator $H \to \widetilde{\mathcal H}$.
\end{enumerate}
\end{proposition}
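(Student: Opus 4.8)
The plan is to follow the proofs of Propositions~\ref{prop:Lambda} and~\ref{prop:tilde Lambda} verbatim, feeding in the multivariate ingredients already at hand: the eigenfunction equations of Proposition~\ref{prop:multivariate eigenfunctions} and the orthogonality relations of Lemma~\ref{lem:orth multivariate}. For the intertwining property in~(i), first observe that $\sfY_s^{(j)}$ is self-adjoint in $\U_q^{\tensor N}$: by the first identity in Lemma~\ref{lem:Delta j Ys} we have $\De^{j-1}(Y_s)=\sum_{n=0}^{j-1}(K^2)^{\tensor n}\tensor Y_s\tensor 1^{\tensor(j-1-n)}$, and each summand is $*$-invariant because $(K^2)^*=K^2$ and $Y_s^*=Y_s$ (recall $s\in\T$), so $\sfY_s^{(j)}=1^{\tensor(N-j)}\tensor\De^{j-1}(Y_s)$ is self-adjoint as well. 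Since $\pi=\pi_{k_1}\tensor\cdots\tensor\pi_{k_N}$ is a $*$-representation, the same bra-ket computation as in Proposition~\ref{prop:Lambda} gives, for $f\in F_0(\N^N)$,
\[
\big(\La(\pi(\sfY_s^{(j)})f)\big)(\bx)=\big\langle\pi(\sfY_s^{(j)})f,v_\bx\big\rangle_H=\big\langle f,\pi(\sfY_s^{(j)})v_\bx\big\rangle_H=\la_{x_{N+1-j},s}\,(\La f)(\bx),
\]
the last step being Proposition~\ref{prop:multivariate eigenfunctions} with $u=1$. Here the manipulation is legitimate even though $v_\bx\notin H$, because $f$ has finite support and $\pi(X)$ maps $F_0(\N^N)$ into itself for every $X\in\U_q^{\tensor N}$; moreover $\La f$, being a finite linear combination of the functions $v_\bx(\bn)$, lies in $\mathcal P$ since each $v_\bx(\bn)$ is a polynomial in $x_1+x_1^{-1},\ldots,x_N+x_N^{-1}$.

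For the unitarity of $\La$ I would argue exactly as in Proposition~\ref{prop:Lambda}. Define $\de_\bm\in H$ by $\de_\bm(\bn)=\de_{\bm,\bn}/\om(\bn)$; then $\{\de_\bm\mid\bm\in\N^N\}$ is an orthogonal basis of $H$ with $\|\de_\bm\|_H^2=\om(\bm)^{-1}$. Since $(\La\de_\bm)(\bx)=v_\bx(\bm)$, the operator $\La$ carries this basis onto the set $\{v_\bullet(\bm)\mid\bm\in\N^N\}$, which by Lemma~\ref{lem:orth multivariate}(i) is an orthogonal basis of $\mathcal H$ with the same squared norm $\om(\bm)^{-1}$. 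Thus $\La$ maps an orthogonal basis of $H$ bijectively onto an orthogonal basis of $\mathcal H$ preserving norms, hence extends to a unitary operator $H\to\mathcal H$.

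Part~(ii) is proved in the identical fashion, now using the second identity in Lemma~\ref{lem:Delta j Ys} together with $\widetilde Y_t^*=\widetilde Y_t$ (valid since $t\in\R$ with $|t|\geq q^{-1}$) and $(K^{-2})^*=K^{-2}$ to see that $\widetilde\sfY_t^{(j)}=\De^{j-1}(\widetilde Y_t)\tensor 1^{\tensor(N-j)}$ is self-adjoint; the intertwining with multiplication by $\la_{t,y_j}$ then follows from the second eigenfunction equation in Proposition~\ref{prop:multivariate eigenfunctions}, and unitarity of $\widetilde\La$ follows from $(\widetilde\La\de_\bm)(\by)=\widetilde v_\by(\bm)$ and Lemma~\ref{lem:orth multivariate}(ii). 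One could alternatively deduce~(ii) from~(i) via the substitution $q\leftrightarrow q^{-1}$, using Lemma~\ref{lem:vq = tilde vq-1} and the $q\leftrightarrow q^{-1}$ invariance of $\om$, but the direct route is shorter.

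I do not expect a real obstacle: the proposition is precisely the formal consequence of Proposition~\ref{prop:multivariate eigenfunctions} and Lemma~\ref{lem:orth multivariate} that its preamble announces, and the only points deserving a sentence of justification are the self-adjointness of the coproduct elements $\sfY_s^{(j)}$ and $\widetilde\sfY_t^{(j)}$ in the unbounded $*$-representation $\pi$ (reduced above to Lemma~\ref{lem:Delta j Ys} and self-adjointness of $Y_s$, $\widetilde Y_t$, $K^{\pm 2}$ in $\U_q$) and the validity of the bra-ket manipulation on the dense domain $F_0(\N^N)$. All remaining content is bookkeeping transported from the univariate case through the product structure of $v_\bx(\bn)$, $\widetilde v_\by(\bn)$, $\om(\bn)$, $w(\bx)$ and $\widetilde w(\by)$.
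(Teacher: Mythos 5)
Your proposal is correct and follows exactly the route the paper intends: the paper states this proposition without proof, remarking only that it follows from Proposition \ref{prop:multivariate eigenfunctions} and Lemma \ref{lem:orth multivariate} in the same way as Propositions \ref{prop:Lambda} and \ref{prop:tilde Lambda}, and your write-up is precisely that argument, with the self-adjointness of $\sfY_s^{(j)}$ and $\widetilde\sfY_t^{(j)}$ (via Lemma \ref{lem:Delta j Ys}) spelled out as the only detail the paper leaves implicit.
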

Next we define representations on $\mathcal P$ using the above defined intertwining operators $\La$ and $\widetilde \La$.

\subsection{Representations on $\mathcal P$}
We define a representation $\rho=\rho_{\bk}$ of $\U_q^{\tensor N}$ on $\mathcal P$ by
\[
\rho(X) = \La  \circ \pi(X) \circ \La^{-1}, \qquad X \in \U_q^{\tensor N}.
\]
In this representation $\sfY_{s}^{(j)}$, $j=1,\ldots,N$, act as multiplication by $\la_{x_{N-j+1},s}$. We define for $j=1,\ldots,N$
\[
\sfK^{-2,(j)} = \De^{j-1}(K^{-2}) \tensor 1^{\tensor(N-j)} \in \U_q^{\tensor N}.
\]
Our next goal is to realize $\rho(\sfK^{-2,(j)})$ as explicit difference operators. To do this we first show that $\pi_k(K^{-2})$ acts on a univariate polynomial $v_{x,s}$ as a `dynamical' difference operator, i.e.~it acts as a difference operator in the variable $x$ and in the parameter $s$.
\begin{lemma} \label{lem:rho(K-2) dynamic}
The univariate Al-Salam--Chihara polynomials $v_{x,s}(n)$ satisfy
\[
\begin{split}
[\pi_k(K^{-2}) v_{x,s}] (n) & =  C(x;s) v_{x q^2,s q^2}(n) + D(x;s) v_{x,s q^2}(n) + C(x^{-1};s) v_{x/q^2,s q^2}(n)\\
& = C(x;s^{-1}) v_{x q^2,s/q^2}(n) + D(x;s^{-1}) v_{x,s/q^2}(n) + C(x^{-1};s^{-1}) v_{x/q^2,s/q^2}(n),
\end{split}
\]
where
\[
\begin{split}
C(x;s) = C_{k}(x;s) & = \frac{q^{-k} (1-q^k x s)(1-q^{k+2}x s)}{(1-x^2)(1-q^2x^2)},\\
D(x;s) = D_{k}(x;s) & = \frac{q^{3-k} (q^{-1} + q) (1- q^ks x) (1 - q^k s/x)}{ (1-q^2 x^2)(1-q^2/x^{2})}.
\end{split}
\]
\end{lemma}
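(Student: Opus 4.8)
The plan is to recognise both identities as the parameter-shifting $q$-difference equation \eqref{eq:diffeq2 ACS} for the Al-Salam--Chihara polynomials, applied in base $q^2$ with parameters $(a,b)=(sq^k,s^{-1}q^k)$, after the $n$-dependent normalisation in the definition \eqref{eq:definition vxs} of $v_{x,s}(n)$ has been stripped off. Concretely, I would write $v_{x,s}(n)=c_n(s)\,Q_n(x;sq^k,s^{-1}q^k\mvert q^2)$ with $c_n(s)=\big(q^{-(3k-1)/2}/s\big)^n (q^{2k};q^2)_n/(q^2;q^2)_n$, and note from \eqref{eq:representation pi} that $[\pi_k(K^{-2})v_{x,s}](n)=q^{-k-2n}v_{x,s}(n)$. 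The key observation is that the Al-Salam--Chihara parameters of the polynomials occurring in $v_{xq^{\pm2},sq^2}(n)$ and $v_{x,sq^2}(n)$ are $\big((sq^2)q^k,(sq^2)^{-1}q^k\mvert q^2\big)=(aq^2,b/q^2\mvert q^2)$, which is exactly the parameter shift on the right-hand side of \eqref{eq:diffeq2 ACS} in base $q^2$; moreover $c_n(sq^2)=q^{-2n}c_n(s)$, so that $Q_n(\mdot;aq^2,b/q^2\mvert q^2)=q^{2n}c_n(s)^{-1}v_{\mdot,sq^2}(n)$.

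I would then apply \eqref{eq:diffeq2 ACS} with $q$ replaced by $q^2$ and $(a,b)=(sq^k,s^{-1}q^k)$, and multiply through by $q^{-k-2n}c_n(s)$. The left-hand side becomes $q^{-k-2n}v_{x,s}(n)=[\pi_k(K^{-2})v_{x,s}](n)$, while each of the three right-hand terms becomes $q^{-k}$ times the corresponding coefficient of \eqref{eq:diffeq2 ACS} times $v_{xq^2,sq^2}(n)$, $v_{x/q^2,sq^2}(n)$, $v_{x,sq^2}(n)$ respectively. A short check with powers of $q$ — for the middle coefficient using $q^{-k}q^2(q^2+1)=q^{3-k}(q^{-1}+q)$ — shows that these $q^{-k}$-scaled coefficients are precisely $C(x;s)$, $C(x^{-1};s)$ and $D(x;s)$, which gives the first displayed identity.

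For the second identity I would use that, by Remark \ref{remark}(i), $v_{x,s}(n)$ is a polynomial in $s+s^{-1}$, hence invariant under $s\leftrightarrow s^{-1}$; in particular $v_{y,w}(n)=v_{y,1/w}(n)$ for every $w$. Replacing $s$ by $s^{-1}$ throughout the first identity then leaves the left-hand side unchanged, turns $C(x;s)$ and $D(x;s)$ into $C(x;s^{-1})$ and $D(x;s^{-1})$, and turns $v_{xq^{\pm2},sq^2}(n)$ and $v_{x,sq^2}(n)$ into $v_{xq^{\pm2},q^2/s}(n)=v_{xq^{\pm2},s/q^2}(n)$ and $v_{x,q^2/s}(n)=v_{x,s/q^2}(n)$ — which is exactly the second identity. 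One could instead derive it from a base-$q^{-2}$ version of \eqref{eq:diffeq2 ACS}, but that would require an extra connection formula and is messier.

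There is no real conceptual difficulty: both identities are just \eqref{eq:diffeq2 ACS} in disguise. The only step requiring genuine care is the bookkeeping of the $q$-power prefactors — checking that after multiplying by $q^{-k-2n}c_n(s)$ and converting the shifted polynomials back to $v$'s the coefficients collapse exactly to the stated $C$ and $D$ — so the (mild) main obstacle is purely computational.
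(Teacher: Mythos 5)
Your proposal is correct and follows essentially the same route as the paper's own proof: observe that $\pi_k(K^{-2})$ is multiplication by $q^{-k-2n}$, apply the parameter-shifting difference equation \eqref{eq:diffeq2 ACS} in base $q^2$ with $(a,b)=(q^k s, q^k/s)$ (absorbing the $q^{-2n}$ into the normalisation via $c_n(sq^2)=q^{-2n}c_n(s)$), and deduce the second identity from the $s\leftrightarrow s^{-1}$ invariance of $v_{x,s}$. The paper states this in one line; your write-up just makes the bookkeeping explicit.
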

\begin{proof}
Note that $\pi(K^{-2})$ acts as multiplication by $q^{-k-2n}$ on $F(\N)$.
Then the first identity follows from the definition \eqref{eq:definition vxs} of $v_{x,s}$ and from applying Lemma \ref{eq:diffeq2 ACS} with $a=q^ks$, $b=q^k/s$. The second identity follows from the first one using $s \leftrightarrow s^{-1}$ invariance of $v_{x,s}$.
\end{proof}

We are now ready to realize $\pi(\sfK^{-2,(j)})$ as an explicit $q$-difference operator. We use the following notation. For $i=1,\ldots,N$ let $\mathcal T_i$ be the elementary $q$-difference operator defined by
\[
[\mathcal T_i f](\bx) = f(x_1,\ldots,x_{i-1},x_i q^2,x_{i+1},\ldots,x_N).
\]
For $j \in \{1,\ldots, N\}$ and $\bnu = (\nu_1,\ldots,\nu_j) \in \{-1,0,1\}^j$ we define
\[
\mathcal T_{\bnu} = \mathcal T_{1}^{\nu_1} \cdots \mathcal T_{j}^{\nu_j}.
\]
\begin{proposition} \label{prop:K-2 mult difference operator}
For $j=1,\ldots,N$ $\rho(\sfK^{-2,(j)})$ is the $q$-difference operator on $\mathcal P$ given by
\[
\rho(\sfK^{-2,(j)}) = \sum_{\bnu \in \{-1,0,1\}^j } V_{\bnu}^{(j)}(\bx) \mathcal T_{\bnu}
\]
where
\[
V_{\bnu}^{(j)}(\bx) =\prod_{i=1}^j V_{\bnu,i}^{(j)}(\bx),
\]
with
\begin{align*}
V_{\bnu,j}^{(j)}(\bx)=V_{\nu_j,k_j}^{(j)}(x_j;x_{j+1}) & =
\begin{cases}
A_{k_j}(x_j^{\nu_j} ;x_{j+1}), & \nu_j \neq 0,\\
B_{k_j}(x_j;x_{j+1}), & \nu_j=0,
\end{cases} \\
\intertext{and for $i=1,\ldots,j-1$}
V_{\bnu,i}^{(j)}(\bx)=V_{\nu_i,\nu_{i+1},k_i}^{(j)}(x_i;x_{i+1}) & =
\begin{cases}
A_{k_i}(x_i^{\nu_i};x_{i+1}), & \nu_i \neq 0,\, \nu_{i+1}=0,\\
B_{k_i}(x_i;x_{i+1}), & \nu_i=0,\, \nu_{i+1}=0,\\
C_{k_i}(x_i^{\nu_i};x_{i+1}^{\nu_{i+1}}), & \nu_i\neq 0,\, \nu_{i+1}\neq 0,\\
D_{k_i}(x_i;x_{i+1}^{\nu_{i+1}}), & \nu_i= 0,\, \nu_{i+1}\neq 0.
\end{cases}
\end{align*}
Here $A$ and $B$ are given in Lemma \ref{lem:rho(K-2)}, and $C$ and $D$ are given in Lemma \ref{lem:rho(K-2) dynamic}.
In particular, the multivariate Al-Salam--Chihara polynomials $v_{\bx}(\bn)$ satisfy
\[
\sum_{\bnu \in \{-1,0,1\}^j } V_{\bnu}^{(j)}(\bx) [\mathcal T_{\bnu} v_{\bullet}(\bn)](\bx) = q^{-\sum_{i=1}^j (k_i+2n_i)} \, v_{\bx}(\bn), \qquad j=1,\ldots,N.
\]
\end{proposition}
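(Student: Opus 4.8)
The plan is to reduce the operator identity to the displayed eigenvalue identity for the multivariate Al-Salam--Chihara polynomials, and then to prove that identity by an iterated application of Lemmas~\ref{lem:rho(K-2)} and~\ref{lem:rho(K-2) dynamic}. Since $\De(K^{-2})=K^{-2}\tensor K^{-2}$, iteration gives $\De^{j-1}(K^{-2})=(K^{-2})^{\tensor j}$, so $\sfK^{-2,(j)}=(K^{-2})^{\tensor j}\tensor 1^{\tensor(N-j)}$ and therefore $\pi(\sfK^{-2,(j)})$ acts on $F(\N^N)$ as multiplication by $q^{-\sum_{i=1}^j(k_i+2n_i)}$. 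Exactly as in the proof of Proposition~\ref{prop:La(N)} one has $\La\de_\bn=v_\bullet(\bn)$ for $\de_\bn(\bm)=\de_{\bn,\bm}/\om(\bn)$, so $\rho(\sfK^{-2,(j)})v_\bullet(\bn)=q^{-\sum_{i=1}^j(k_i+2n_i)}v_\bullet(\bn)$. Moreover each $v_{x,s}(n)$ equals $c_n(x+x^{-1})^n$ plus terms of lower degree in $x+x^{-1}$, with $c_n\neq0$ a constant independent of $s$ (immediate from~\eqref{eq:definition vxs} and the three-term recurrence; cf.\ Remark~\ref{remark}(i)), so the $v_\bullet(\bn)$, $\bn\in\N^N$, are triangular for the lexicographic order on the variables $x_1+x_1^{-1},\ldots,x_N+x_N^{-1}$ with leading term a nonzero multiple of $\prod_{i=1}^N(x_i+x_i^{-1})^{n_i}$, and hence span $\mathcal P$. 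It therefore suffices to prove the displayed identity, which says precisely that the difference operator $\sum_{\bnu}V_\bnu^{(j)}(\bx)\mathcal T_\bnu$ sends each $v_\bullet(\bn)$ to $q^{-\sum_{i=1}^j(k_i+2n_i)}v_\bullet(\bn)$; the two operators then agree on $\mathcal P$.

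To prove the identity I would write $q^{-(k_i+2n_i)}v_{x_i,x_{i+1},k_i}(n_i)=[\pi_{k_i}(K^{-2})v_{x_i,x_{i+1},k_i}](n_i)$, so that
\[
q^{-\sum_{i=1}^j(k_i+2n_i)}v_\bx(\bn)=\left(\prod_{i=1}^j[\pi_{k_i}(K^{-2})v_{x_i,x_{i+1},k_i}](n_i)\right)\prod_{l=j+1}^N v_{x_l,x_{l+1},k_l}(n_l),
\]
and then expand the $j$ operator factors one at a time in order of decreasing $i$. The factor $i=j$ is expanded by Lemma~\ref{lem:rho(K-2)} with $s$ specialized to $x_{j+1}$ (which is not shifted), producing $\sum_{\nu_j\in\{-1,0,1\}}V_{\nu_j,k_j}^{(j)}(x_j;x_{j+1})\,v_{x_jq^{2\nu_j},x_{j+1}}(n_j)$. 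Inductively, after the factors $j,\ldots,i+1$ have been expanded the accumulated coefficient equals $\prod_{l=i+1}^jV_{\bnu,l}^{(j)}(\bx)$, each already-expanded polynomial has the form $v_{x_lq^{2\nu_l},x_{l+1}q^{2\nu_{l+1}}}(n_l)$ (with $\nu_{j+1}:=0$), and the factors with index $\le i$ are untouched, still with their original parameters; one then expands the $i$-th factor so that its parameter $x_{i+1}$ is shifted by exactly $\nu_{i+1}$: by Lemma~\ref{lem:rho(K-2)} if $\nu_{i+1}=0$, by the first identity of Lemma~\ref{lem:rho(K-2) dynamic} with $s=x_{i+1}$ if $\nu_{i+1}=1$, and by its second identity with $s=x_{i+1}$ if $\nu_{i+1}=-1$ (in all three cases the operand is the same expression $[\pi_{k_i}(K^{-2})v_{x_i,x_{i+1},k_i}](n_i)$). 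Comparing the coefficients $A,B$ of Lemma~\ref{lem:rho(K-2)} and $C,D$ of Lemma~\ref{lem:rho(K-2) dynamic} with the four branches of $V_{\nu_i,\nu_{i+1},k_i}^{(j)}$ shows that this yields $\sum_{\nu_i\in\{-1,0,1\}}V_{\nu_i,\nu_{i+1},k_i}^{(j)}(x_i;x_{i+1})\,v_{x_iq^{2\nu_i},x_{i+1}q^{2\nu_{i+1}}}(n_i)$, which continues the pattern. After reaching $i=1$, multiplying back the factors with $l>j$, for which $v_{x_l,x_{l+1},k_l}(n_l)=v_{x_lq^{2\nu_l},x_{l+1}q^{2\nu_{l+1}}}(n_l)$ with $\nu_l:=0$, and recognising $\prod_{l=1}^Nv_{x_lq^{2\nu_l},x_{l+1}q^{2\nu_{l+1}}}(n_l)=[\mathcal T_\bnu v_\bullet(\bn)](\bx)$, yields the claimed identity.

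The step I expect to require the most care is the bookkeeping of the iteration. At stage $i$ the choice among Lemma~\ref{lem:rho(K-2)} and the two identities of Lemma~\ref{lem:rho(K-2) dynamic} is forced by the shift $\nu_{i+1}$ already committed to at stage $i+1$, so that the shift of $x_{i+1}$ as the first argument of the $(i+1)$-th polynomial coincides with its shift as the parameter of the $i$-th polynomial; and one must check, according to the value of $\nu_{i+1}\in\{-1,0,1\}$ and to whether $\nu_i=0$, that expanding the single expression $[\pi_{k_i}(K^{-2})v_{x_i,x_{i+1},k_i}](n_i)$ by the prescribed rule produces exactly the coefficients $A_{k_i}(x_i^{\nu_i};x_{i+1})$, $B_{k_i}(x_i;x_{i+1})$, $C_{k_i}(x_i^{\nu_i};x_{i+1}^{\nu_{i+1}})$, $D_{k_i}(x_i;x_{i+1}^{\nu_{i+1}})$ that define $V_{\nu_i,\nu_{i+1},k_i}^{(j)}$.
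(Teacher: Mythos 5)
Your proposal is correct and follows essentially the same route as the paper: reduce the operator identity to the displayed eigenvalue identity for $v_{\bx}(\bn)$, then expand the $j$ factors $\pi_{k_i}(K^{-2})v_{x_i,x_{i+1},k_i}$ in order of decreasing $i$ (the paper phrases this as a backwards induction on $l$), choosing Lemma~\ref{lem:rho(K-2)} or one of the two identities of Lemma~\ref{lem:rho(K-2) dynamic} according to the already-committed shift $\nu_{i+1}$ of the parameter $x_{i+1}$. Your additional justification that the $v_{\bullet}(\bn)$ span $\mathcal P$ (via triangularity) is a correct filling-in of a step the paper leaves implicit, not a different argument.
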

\begin{proof}
Let us fix a number $j$. It suffices to prove the $q$-difference equations for the multivariate Al-Salam--Chihara polynomials, which boils down to proving the identities
\[
\big[\pi_{\bk_j}(\De^{j-1}(K^{-2})) v_{\bx_j}\big](\bn_j)  =  \sum_{\bnu \in \{-1,0,1\}^j } V_{\bnu}^{(j)}(\bx) [\mathcal T_{\bnu}v_{\bx_j}](\bn_j).
\]
Here we use notations similar as in the proof of Proposition \ref{prop:multivariate eigenfunctions}; in particular
\[
v_{\bx_j}(\bn_j) = \prod_{i=1}^j v_{x_i,x_{i+1},k_i}(n_i).
\]
We will show that, for $l=1,\ldots,j$,
\begin{equation} \label{eq:l difference operators}
\begin{split}
\pi_{\bk_j}(\De^{j-1}(K^{-2})) v_{\bx_j} &=\prod_{i=1}^{l-1}  \pi_{k_i}(K^{-2})v_{x_i,x_{i+1},k_i} \\
& \times \sum_{(\nu_{l},\ldots,\nu_j) \in \{-1,0,1\}^{j-l+1} }  V_{(\nu_{l},\ldots,\nu_j)}^{(j)}(\bx) \mathcal T_{(\nu_{l},\ldots,\nu_j)} \prod_{i=l}^j v_{x_i,x_{i+1},k_i},
\end{split}
\end{equation}
where we use the notations
\[
V_{(\nu_l,\ldots,\nu_j)}^{(j)}(\bx) = \prod_{i=l}^j V_{\bnu,i}^{(j)}(\bx) \quad \text{and} \quad \mathcal T_{(\nu_l,\ldots,\nu_j)} = \mathcal T_l^{\nu_l} \cdots \mathcal T_j^{\nu_j}.
\]
For $l=1$ this is the desired result.

We prove \eqref{eq:l difference operators} by backwards induction on $l$. For $l=j$ the identity to prove is
\[
\begin{split}
\pi_{\bk_j}&(\De^{j-1}(K^{-2})) \, v_{\bx_j} = \prod_{i=1}^{j-1}  \pi_{k_i}(K^{-2})\,v_{x_i,x_{i+1},k_i} \\
&  \times \Big( A_{k_j}(x_j;x_{j+1}) v_{ x_j q^2,x_{j+1},k_j} + B_{k_j}(x_j;x_{j+1}) v_{x_j,x_{j+1},k_j} + A_{k_j}(x_j^{-1};x_{j+1}) v_{x_j/q^2,x_{j+1},k_j}\Big),
\end{split}
\]
which is valid by Lemma \ref{lem:rho(K-2)}. Next assume \eqref{eq:l difference operators} holds for some $l$, then
\[
\begin{split}
\pi_{\bk_j}(\De^{j-1}(K^{-2}))\, v_{\bx_j} = & \,F(\bx) \, \pi_{k_{l-1}}(K^{-2})v_{x_{l-1},x_{l},k_{l-1}} \\
&\times \sum_{\nu_{l} \in \{-1,0,1\} }  V_{\nu_{l},\nu_{l+1},k_{l}}^{(j)}(x_{l};x_{l+1})  \mathcal T_{l}^{\nu_{l}} v_{x_l,x_{l+1},k_l},
\end{split}
\]
where $F(\bx)$ is a function (which can be made explicit) independent of $x_{l}$. We rewrite the factor $\pi_{k_{l-1}}(K^{-2}) v_{x_{l-1},x_{l},k_{l-1}}$ depending on the value of $\nu_{l}$: for $\nu_{l}=0$ we use Lemma \ref{lem:rho(K-2)},
for $\nu_{l}=1$ we use the first identity in Lemma \ref{lem:rho(K-2) dynamic}, and for $\nu_{l}=-1$ we use the second identity in Lemma \ref{lem:rho(K-2) dynamic}. This leads to \eqref{eq:l difference operators} for $l-1$.
\end{proof}

We define another representation $\widetilde \rho=\widetilde \rho_{\bk}$ of $\U_q^{\tensor N}$ on $\mathcal P$ by
\[
\widetilde \rho(X) = \widetilde \La \circ \pi(X) \circ \widetilde \La^{-1}, \qquad X \in \mathcal U_q^{\tensor N}.
\]
By Proposition \ref{prop:La(N)} the operator $\widetilde \rho(\widetilde{\sfY}_{t,u}^{(j)})$ is multiplication by $\la_{t,y_j}$. We also define for $j=1,\ldots,N$,
\[
\sfK^{2,(j)} = 1^{\tensor(N-j)} \tensor \De^{j-1}(K^2) \in \U_q^{\tensor N}.
\]
$\widetilde \rho (\sfK^{2,(j)})$ can be realized as an explicit $q$-difference operator on $\mathcal P$. This is done in the same way as we did above for $\rho(\sfK^{-2,(j)})$, and formally it is just replacing $q$ by $q^{-1}$. We omit most of the details. First we need the analog of Lemma \ref{lem:rho(K-2) dynamic} for $\widetilde v_{y,t}$.
\begin{lemma} \label{lem:tilde rho(K2) dynamic}
The univariate Al-Salam--Chihara polynomials $\widetilde v_{y,t}(n)$ satisfy
\[
\begin{split}
[\pi_k(K^{2})\widetilde v_{y,t}] (n) & =  \widetilde C(y;t) \widetilde v_{y/ q^2,t/ q^2}(n) + \widetilde D(y;t) \widetilde v_{y,t/ q^2}(n) + \widetilde C(y^{-1};t) \widetilde v_{yq^2,t/ q^2}(n)\\
& = \widetilde C(y;t^{-1}) \widetilde v_{y/ q^2,tq^2}(n) + \widetilde D(y;t^{-1}) \widetilde v_{y,tq^2}(n) + \widetilde C(y^{-1};t^{-1}) \widetilde v_{yq^2,tq^2}(n),
\end{split}
\]
where
\[
\begin{split}
\widetilde C(y;t) = \widetilde C_{k}(y;t) & = \frac{q^{k} (1-q^{-k} ty)(1-q^{-k-2}t y)}{(1-y^2)(1-y^2/ q^2)},\\
\widetilde D(y;t) = \widetilde D_{k}(y;t) & = \frac{q^{k-3} (q^{-1} + q) (1- q^{-k}ty) (1 - q^{-k} t/y)}{ (1-y^2/q^2)(1-1/x^{2}q^2)}.
\end{split}
\]
\end{lemma}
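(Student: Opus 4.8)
The plan is to deduce this as the base-$q^{-1}$ counterpart of Lemma \ref{lem:rho(K-2) dynamic}, using the identity $\widetilde v_{y,t,k,q}(n) = v_{y,t,k,q^{-1}}(n)$ recorded after \eqref{eq:definition tilde vxs}. On $F(\N)$ the operator $\pi_k(K^2)$ is multiplication by $q^{k+2n}$, and this coincides with the action of $\pi_k(K^{-2})$ after replacing $q$ by $q^{-1}$; hence $[\pi_k(K^2)\widetilde v_{y,t}](n) = [\pi_{k,q^{-1}}(K^{-2})\,v_{y,t,q^{-1}}](n)$. Since Lemma \ref{lem:rho(K-2) dynamic} is an algebraic $q$-difference identity (its proof invokes only \eqref{eq:diffeq2 ACS}, valid for all $q>0$, $q\neq 1$), I would apply it with $q$ replaced by $q^{-1}$, $x$ by $y$ and $s$ by $t$. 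Under this substitution the forward shift $x\mapsto xq^2$ turns into the backward shift $y\mapsto y/q^2$ and $sq^2$ turns into $t/q^2$, so the right-hand side becomes a combination of $v_{y/q^2,t/q^2,q^{-1}}(n)$, $v_{y,t/q^2,q^{-1}}(n)$ and $v_{yq^2,t/q^2,q^{-1}}(n)$ with coefficients $C_{k,q^{-1}}(y;t)$, $D_{k,q^{-1}}(y;t)$ and $C_{k,q^{-1}}(y^{-1};t)$ respectively. Translating the three polynomials back via $v_{y',s',k,q^{-1}}(n)=\widetilde v_{y',s',k,q}(n)$ yields precisely the first displayed identity, with $\widetilde C_k:=C_{k,q^{-1}}$ and $\widetilde D_k:=D_{k,q^{-1}}$.

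It then remains to check that substituting $q\to q^{-1}$ in the formulas of Lemma \ref{lem:rho(K-2) dynamic} for $C_k$ and $D_k$ reproduces the stated expressions for $\widetilde C_k$ and $\widetilde D_k$ (e.g.\ $1-q^2x^2\mapsto 1-q^{-2}y^2=1-y^2/q^2$, and the overall powers of $q$ flip sign); this is a short direct computation. Finally, the second displayed identity follows from the first by using the $t\leftrightarrow t^{-1}$ invariance of $\widetilde v_{y,t}(n)$ — a consequence of the symmetry property of the Al-Salam--Chihara polynomials, exactly as the second identity in Lemma \ref{lem:rho(K-2) dynamic} is deduced from the first.

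There is no genuine obstacle here; the only point needing attention is the bookkeeping of the substitution $q\to q^{-1}$, in particular that it interchanges $\mathcal T$ and $\mathcal T^{-1}$ (so the first and third terms swap roles compared with Lemma \ref{lem:rho(K-2) dynamic}), mirroring the passage from Lemma \ref{lem:rho(K-2)} to Lemma \ref{lem:tilde rho(K2)}. Alternatively one can argue directly, imitating the proof of Lemma \ref{lem:rho(K-2) dynamic}: apply \eqref{eq:diffeq2 ACS} in base $q^{-2}$ with $a=q^{-k}t$, $b=q^{-k}/t$ to the $_3\varphi_2$-part of $\widetilde v_{y,t}(n)$, absorb the factor $q^{k+2n}$ produced by $\pi_k(K^2)$ into the three shifted prefactors (the $q^{2n}$ accounting for the shift $t\mapsto t/q^2$ and $q^k$ being an overall constant), and collect the coefficients of $\widetilde v_{y/q^2,t/q^2}$, $\widetilde v_{y,t/q^2}$ and $\widetilde v_{yq^2,t/q^2}$.
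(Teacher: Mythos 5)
Your proposal is correct and is exactly the argument the paper intends: the lemma is stated without proof because, as the text says just before it, the passage from $\rho(\sfK^{-2,(j)})$ to $\widetilde\rho(\sfK^{2,(j)})$ "is just replacing $q$ by $q^{-1}$", and your careful bookkeeping of that substitution (via $\widetilde v_{y,t,k,q}=v_{y,t,k,q^{-1}}$, the swap of $\mathcal T$ and $\mathcal T^{-1}$, and the $t\leftrightarrow t^{-1}$ symmetry for the second identity) is precisely the omitted verification. Your closing observation that the stated $\widetilde D_k$ should read $1-1/y^2q^2$ rather than $1-1/x^2q^2$ is consistent with this being a typo in the paper.
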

With this lemma we can prove the analog of Proposition \ref{prop:K-2 mult difference operator}. The following notation will be useful. For $j \in \{1,\ldots, N\}$ and $\bnu = (\nu_1,\ldots,\nu_j) \in \{-1,0,1\}^j$ we define
\[
\hat{\mathcal T}_{\bnu} = \mathcal T_{N}^{\nu_1} \cdots \mathcal T_{N-j+1}^{\nu_j}.
\]
\begin{proposition} \label{prop:K2 mult difference operator}
For $j=1,\ldots,N$ $\widetilde \rho(\sfK^{2,(j)})$ is the $q$-difference operator on $\mathcal P$ given by
\[
\widetilde \rho(\sfK^{2,(j)}) = \sum_{\bnu \in \{-1,0,1\}^j } \widetilde V_{\bnu}^{(j)}(\by) \hat{\mathcal T}_{\bnu}
\]
where
\[
\widetilde V_{\bnu}^{(j)}(\by) =\prod_{i=1}^j \widetilde V_{\bnu,i}^{(j)}(\by),
\]
with
\begin{align*}
\widetilde V_{\bnu,j}^{(j)}(\by)=\widetilde V_{\nu_j,k_{N-j+1}}^{(j)}(y_{N-j+1};y_{N-j}) & =
\begin{cases}
\widetilde A_{k_{N-j+1}}(y_{N-j+1}^{\nu_j} ;y_{N-j}), & \nu_j \neq 0,\\
\widetilde B_{k_{N-j+1}}(y_{N-j+1};y_{N-j}), & \nu_j=0,
\end{cases} \\
\intertext{and for $i=1,\ldots,j-1$}
\widetilde V_{\bnu,i}^{(j)}(\bx)=\widetilde V_{\nu_i,\nu_{i+1},k_{N-i+1}}^{(j)}(y_{N-i+1};y_{N-i}) & =
\begin{cases}
\widetilde A_{k_{N-i+1}}(y_{N-i+1}^{\nu_i};y_{N-i}), & \nu_i \neq 0,\, \nu_{i+1}=0,\\
\widetilde B_{k_{N-i+1}}(y_{N-i+1};y_{N-i}), & \nu_i=0,\, \nu_{i+1}=0,\\
\widetilde C_{k_{N-i+1}}(y_{N-i+1}^{\nu_i};y_{N-i}^{\nu_{i+1}}), & \nu_i\neq 0,\, \nu_{i+1}\neq 0,\\
\widetilde D_{k_{N-i+1}}(y_{N-i+1};y_{N-i}^{\nu_{i+1}}), & \nu_i= 0,\, \nu_{i+1}\neq 0.
\end{cases}
\end{align*}
Here $\widetilde A$ and $\widetilde B$ are given in Lemma \ref{lem:tilde rho(K2)}, and $\widetilde C$ and $\widetilde D$ are given in Lemma \ref{lem:tilde rho(K2) dynamic}.
In particular, the multivariate Al-Salam--Chihara polynomials $\widetilde v_{\by}(\bn)$ satisfy
\[
\sum_{\bnu \in \{-1,0,1\}^j } \widetilde V_{\bnu}^{(j)}(\bx) [\hat{\mathcal T}_{\bnu} v_{\bullet}(\bn)](\bx) = q^{\sum_{i=N-j+1}^N (k_i+2n_i)} \, v_{\by}(\bn), \qquad j=1,\ldots,N.
\]
\end{proposition}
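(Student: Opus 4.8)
The plan is to imitate the proof of Proposition \ref{prop:K-2 mult difference operator} line by line, with $q$ replaced by $q^{-1}$ and the reversal of the tensor factors taken into account. As a first step I would reduce the statement to the displayed $q$-difference equation for the multivariate Al-Salam--Chihara polynomials $\widetilde v_{\by}(\bn)$. Indeed, $\widetilde\rho(\sfK^{2,(j)}) = \widetilde\La\circ\pi(\sfK^{2,(j)})\circ\widetilde\La^{-1}$, and $\widetilde\La$ sends the orthogonal basis vectors $\de_\bm$ of $H$ (with $\de_\bm(\bn)=\de_{\bm,\bn}/\om(\bn)$) to the functions $\widetilde v_{\bullet}(\bm)\in\mathcal P$. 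Since $\sfK^{2,(j)} = 1^{\tensor(N-j)}\tensor(K^2)^{\tensor j}$ acts diagonally in $\pi$, namely $\pi(\sfK^{2,(j)})\de_\bm = q^{\sum_{i=N-j+1}^N(k_i+2m_i)}\de_\bm$, the operator $\widetilde\rho(\sfK^{2,(j)})$ is diagonalized by the $\widetilde v_{\bullet}(\bm)$ with exactly these eigenvalues. As the $\widetilde v_{\bullet}(\bm)$ are triangular with respect to total degree, they span $\mathcal P$, so the asserted operator identity on $\mathcal P$ is equivalent to the eigenvalue identity stated at the end of the proposition --- the very same reduction used in Proposition \ref{prop:K-2 mult difference operator}.

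To prove that identity I would use that $\pi(\sfK^{2,(j)})$ multiplies the last $j$ tensor factors of $\widetilde v_{\by}(\bn) = \prod_{i=1}^N \widetilde v_{y_i,y_{i-1},k_i}(n_i)$ by $\prod_{i=N-j+1}^N q^{k_i+2n_i}$, the other factors being untouched, and then rewrite this expression one factor at a time. One starts from the factor $\widetilde v_{y_{N-j+1},y_{N-j},k_{N-j+1}}$, whose parameter $y_{N-j}$ is never shifted: applying Lemma \ref{lem:tilde rho(K2)} rewrites $q^{k_{N-j+1}+2n_{N-j+1}}\widetilde v_{y_{N-j+1},y_{N-j},k_{N-j+1}}$ as a three-term expression that shifts only the variable $y_{N-j+1}$, producing the coefficients $\widetilde A,\widetilde B$ of $\widetilde V_{\bnu,j}^{(j)}$. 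Proceeding to the factors $\widetilde v_{y_a,y_{a-1},k_a}$ with $a=N-j+2,\ldots,N$ in turn, the parameter $y_{a-1}$ of the current factor has already received a shift $q^{2\nu}$ in the previous step; if $\nu=0$ one again applies Lemma \ref{lem:tilde rho(K2)}, giving $\widetilde A$ or $\widetilde B$, and if $\nu=\pm1$ one applies the dynamical difference equation Lemma \ref{lem:tilde rho(K2) dynamic}, whose two lines correspond to the two signs, giving $\widetilde C$ or $\widetilde D$. This reproduces exactly the four cases defining $\widetilde V_{\bnu,i}^{(j)}$, and the whole computation is organized into an auxiliary identity proved by backwards induction on the factor index, in complete parallel with \eqref{eq:l difference operators}.

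The step I expect to be the main obstacle is purely combinatorial: matching this bookkeeping to the index conventions in the statement. The component $\nu_i$ of $\bnu$ governs the shift of $y_{N-i+1}$ through $\hat{\mathcal T}_\bnu$, so the factor $\widetilde v_{y_a,y_{a-1},k_a}$ corresponds to $i=N-a+1$; one must check that the shift of the variable $y_a$ recorded there agrees with the shift of the parameter $y_a$ of the next factor $\widetilde v_{y_{a+1},y_a,k_{a+1}}$, i.e.\ that both equal $\nu_{N-a+1}$ --- which is precisely why $\widetilde V_{\bnu,i}^{(j)}$ depends on both $\nu_i$ and $\nu_{i+1}$. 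A shortcut that avoids redoing the induction is to deduce the statement from Proposition \ref{prop:K-2 mult difference operator}: by Lemma \ref{lem:vq = tilde vq-1} one has $\widetilde v_{\bx,s,\bk,q}(\bn) = v_{\hat\bx,s,\hat\bk,q^{-1}}(\hat\bn)$, while $\sfK^{2,(j)}$ in base $q$ corresponds, numerically, to $\sfK^{-2,(j)}$ in base $q^{-1}$ under the same reversal of the tensor factors; substituting this into Proposition \ref{prop:K-2 mult difference operator} turns the coefficients $A,B,C,D$ (in base $q^{-1}$, with reversed arguments) into $\widetilde A,\widetilde B,\widetilde C,\widetilde D$ and the operators $\mathcal T_\bnu$ into $\hat{\mathcal T}_\bnu$. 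In either approach, the final verification is a direct comparison of coefficients with Lemmas \ref{lem:tilde rho(K2)} and \ref{lem:tilde rho(K2) dynamic}, and the restriction of the $\widetilde A$ to the set $S$ is read off as in Lemma \ref{lem:tilde rho(K2)}.
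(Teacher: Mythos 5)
Your proposal is correct and takes essentially the same route as the paper: the paper gives no written proof of this proposition beyond the remark that it is obtained from Proposition \ref{prop:K-2 mult difference operator} by replacing $q$ with $q^{-1}$ and reversing the tensor factors, and both your direct backwards induction (mirroring \eqref{eq:l difference operators}, with Lemma \ref{lem:tilde rho(K2)} for unshifted parameters and Lemma \ref{lem:tilde rho(K2) dynamic} for shifted ones) and your shortcut via Lemma \ref{lem:vq = tilde vq-1} are faithful formalizations of exactly that remark. No gaps.
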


\section{Multivariate Askey-Wilson polynomials} \label{sect:multivariate AW pol}
In this section we define functions which are multivariate extensions of the Askey-Wilson polynomials defined by \eqref{eq:def univariate AW}. Similarly as in Section \ref{sect:AW polynomials} we derive their main properties: orthogonality relations and difference equations. We will also identify them with multiples of the Gasper and Rahman multivariate Askey-Wilson polynomials \eqref{eq:d-var AWpol}.

\bigskip

Similar as in Section \ref{sect:AW polynomials} we study the matrix elements of the change of base between the discrete basis $\{ \widetilde{v}_{\by} \mid \by \in S\}$ of eigenfunctions of $\widetilde{\sfY}_{t}^{(j)}$, $j=1,\ldots,N$, and the continuous basis $\{v_\bx \mid \bx \in \T^N\}$ of eigenfunctions of $\sfY_s^{(j)}$, $j=1,\ldots,N$.
\begin{Definition}
For $\bx \in \T^N$ and $\by \in S= S_{\bk,t,q}$ (see \eqref{eq:def set S} for the set $S_{\bk,t,q}$), we define
\[
P_{\bbe}(\bx,\by) = \langle M_u \widetilde v_{\by}, v_{\bx} \rangle_{H},
\]
where $\bbe$ is the ordered $(N+4)$-tuple given by $\bbe= (s,t,u,k_1,\ldots,k_N,q)$.
\end{Definition}
Observe that $P_{\bbe}(\bx,\by) = \La(M_u \widetilde v_{\by})(\bx) = \widetilde \La(M_u v_{\bx})(\by)$. We first show that these are multiples of the multivariate Askey-Wilson polynomials defined by \eqref{eq:d-var AWpol}.
\begin{theorem} \label{thm:Pbeta=multivariate AWpol}
For $\by = tq^{-\Sigma(\bk)-2\Sigma(\bm)}\in S$,
\[
P_{\bbe}(\bx,\by)  = C_{\bbe}(\bx,\by) P_N(\bm;\bx;\bal\,|\, q^2),
\]
where
\[
C_{\bbe}(\bx,\by) = \frac{(\al_{N+1} \al_{N+2} q^{2M_N},\al_{N+1}q^{2M_N}/\al_{N+2};q^2)_\infty }{(\al_1 x_1^{\pm 1};q^2)_\infty}\prod_{j=1}^N \frac{ \left(- \frac{  \al_0^2q^{-2M_{j-1}}}{\al_j}\right)^{m_j} q^{-m_j(m_j-1)} }{(\al_{j+1}^2/\al_j^2;q^2)_{m_j}},
\]
with
\begin{equation} \label{eq:parameter alpha}
\al_0=u, \qquad \al_{N+2}=s, \qquad \al_j = uq^{K_{j-1}+1}/t \quad \text{for } j=1,\ldots,N+1,
\end{equation}
$M_j = \Sigma(\bm)_j$, $K_j = \Sigma(\bk)_j$ and $M_0=0=K_0$.
\end{theorem}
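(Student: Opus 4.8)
The plan is to reduce the identity to the univariate case of Section~\ref{sect:AW polynomials} by exploiting the product structure of all the ingredients. By \eqref{eq:multivariate ASC} the weight factors as $\om(\bn)=\prod_{j=1}^N\om_{k_j}(n_j)$, while $[M_u\widetilde v_\by](\bn)=\prod_{j=1}^Nu^{n_j}\widetilde v_{y_j,y_{j-1},k_j}(n_j)$ and $v_\bx(\bn)=\prod_{j=1}^Nv_{x_j,x_{j+1},k_j}(n_j)$, each factor depending on $n_j$ alone. Since the $v$-factors are real-valued, the summand of $\langle M_u\widetilde v_\by,v_\bx\rangle_H$ is a product over $j$, so --- granted the absolute convergence established in the appendix --- the $N$-fold sum factors and
\[
P_\bbe(\bx,\by)=\prod_{j=1}^N\big\langle M_u\widetilde v_{y_j,y_{j-1},k_j},\,v_{x_j,x_{j+1},k_j}\big\rangle_{H_{k_j}},
\]
each factor being the univariate object of Definition~\ref{def:AW pol} with $5$-tuple $(x_{j+1},y_{j-1},u,k_j,q)$. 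Writing $K_j=\sum_{i=1}^jk_i$ and $M_j=\sum_{i=1}^jm_i$ (so $K_0=M_0=0$), one has $y_j=tq^{-K_j-2M_j}$, hence $y_j=y_{j-1}q^{-k_j-2m_j}\in S_{k_j,y_{j-1},q}$, while $x_{j+1}\in\T$ (with $x_{N+1}=s$) and $|y_{j-1}|\ge q^{-1}$; thus each factor meets the standing hypotheses.

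Next I would apply Lemma~\ref{lem:P=AW pol} to the $j$-th factor, whose Askey-Wilson parameters are $(a_j,b_j,c_j,d_j)=(q^{k_j}x_{j+1},\,q^{k_j}/x_{j+1},\,qu/y_{j-1},\,q/(uy_{j-1}))$; the lemma writes it as an explicit scalar times $p_{m_j}(x_j;a_j,c_j,b_j,d_j\mvert q^2)$. Substituting $y_{j-1}=tq^{-K_{j-1}-2M_{j-1}}$ and using \eqref{eq:parameter alpha} together with $\al_0=u$ and $\al_{N+2}=s=x_{N+1}$ turns these into
\[
c_j=\al_jq^{2M_{j-1}},\qquad d_j=\frac{\al_j}{\al_0^2}q^{2M_{j-1}},\qquad a_j=\frac{\al_{j+1}}{\al_j}x_{j+1},\qquad b_j=\frac{\al_{j+1}}{\al_j}x_{j+1}^{-1},
\]
with $\al_{j+1}/\al_j=q^{K_j-K_{j-1}}=q^{k_j}$. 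Hence $\{a_j,c_j,b_j,d_j\}$ is exactly the parameter set of the $j$-th univariate factor of the Gasper--Rahman product \eqref{eq:d-var AWpol} in base $q^2$, and by the symmetry of the Askey-Wilson polynomials under permutations of their four parameters, $p_{m_j}(x_j;a_j,c_j,b_j,d_j\mvert q^2)$ equals that factor; taking the product over $j$ reproduces $P_N(\bm;\bx;\bal\mvert q^2)$.

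It then remains to multiply the $N$ scalar prefactors coming from Lemma~\ref{lem:P=AW pol} and recognise their product as $C_\bbe(\bx,\by)$. The elementary factors combine termwise: $(-1)^{m_j}d_j^{-m_j}q^{-m_j(m_j-1)}=(-\al_0^2q^{-2M_{j-1}}/\al_j)^{m_j}q^{-m_j(m_j-1)}$ and $(a_jb_j;q^2)_{m_j}=(\al_{j+1}^2/\al_j^2;q^2)_{m_j}$, which already give the product in $C_\bbe$. For the infinite $q$-shifted factorials one computes $a_jc_jq^{2m_j}=\al_{j+1}q^{2M_j}x_{j+1}$, $b_jc_jq^{2m_j}=\al_{j+1}q^{2M_j}x_{j+1}^{-1}$ and $c_jx_j^{\pm1}=\al_jq^{2M_{j-1}}x_j^{\pm1}$, so that
\[
\prod_{j=1}^N\frac{(a_jc_jq^{2m_j},\,b_jc_jq^{2m_j};q^2)_\infty}{(c_jx_j^{\pm1};q^2)_\infty}=\prod_{j=1}^N\frac{(\al_{j+1}q^{2M_j}x_{j+1}^{\pm1};q^2)_\infty}{(\al_jq^{2M_{j-1}}x_j^{\pm1};q^2)_\infty}
\]
telescopes: the numerator of the $j$-th term cancels the denominator of the $(j+1)$-th, leaving $(\al_{N+1}q^{2M_N}x_{N+1}^{\pm1};q^2)_\infty/(\al_1x_1^{\pm1};q^2)_\infty$ (as $M_0=0$), and with $x_{N+1}=\al_{N+2}$ the numerator becomes $(\al_{N+1}\al_{N+2}q^{2M_N},\,\al_{N+1}q^{2M_N}/\al_{N+2};q^2)_\infty$. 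Assembling everything yields exactly $C_\bbe(\bx,\by)$.

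I expect the main obstacle to be organizational rather than conceptual: keeping the summation index $n_j$, the nesting index $j$, and the data $m_j$ apart, verifying the telescoping of the infinite $q$-Pochhammer symbols, and confirming that the convention $x_{N+1}=s$ used here is compatible with the convention $x_{d+1}=\al_{d+2}$ in \eqref{eq:d-var AWpol}. The single genuinely analytic point --- that the $N$-fold series may be summed factor by factor --- is supplied by the absolute-convergence estimates of the appendix, after which the argument is purely a matter of matching $q$-hypergeometric data.
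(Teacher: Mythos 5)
Your proposal is correct and follows essentially the same route as the paper: factor $\langle M_u\widetilde v_{\by},v_{\bx}\rangle_H$ into the product of univariate matrix elements $P_{\be_j}(x_j,y_j)$ with $\be_j=(x_{j+1},y_{j-1},u,k_j,q)$, apply Lemma \ref{lem:P=AW pol} together with the parameter symmetry of the Askey--Wilson polynomials to each factor, and cancel common factors in the product of prefactors. Your explicit telescoping of the infinite $q$-shifted factorials is just a spelled-out version of the paper's ``cancelling common factors,'' so no further comment is needed.
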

\begin{proof}
From the definition \eqref{eq:multivariate ASC} of the multivariate Al-Salam--Chihara polynomials and our definition \eqref{eq:def univariate AW} of the univariate Askey-Wilson polynomials we obtain
\[
\begin{split}
P_{\bbe}(\bx,\by) &= \prod_{j=1}^N  \langle M_u \widetilde v_{y_j,y_{j-1},k_j}, v_{x_j,x_{j+1},k_j} \rangle_{H_{k_j}} = \prod_{j=1}^N P_{\be_j}(x_j,y_j),
\end{split}
\]
where $\be_j = (x_{j+1},y_{j-1},u,k_j,q)$. Recall here that $x_{N+1}=s$ and $y_0=t$, and note that $y_j = y_{j-1} q^{-k_j-2m_j}$. By Lemma \ref{lem:P=AW pol} and the symmetry of the Askey-Wilson polynomials $p_n(x;a,b,c,d\mvert q)$ in its parameters $a,b,c,d$, a factor $P_{\be_j}(x_j,y_j)$ is a multiple of the Askey-Wilson polynomial
\[
p_{m_j}\left(x_j;\frac{u}{t}q^{1+K_{j-1}+2M_{j-1}},\frac{1}{ut}q^{1+K_{j-1}+2M_{j-1}}, x_{j+1}q^{k_j},\frac{q^{k_j}}{x_{j+1}}\,|\,q^2\right),
\]
which is the $j$-th factor of the multivariate Askey-Wilson polynomial $P_N(\bm;\bx;\bal\,|\, q^2)$ as defined in \eqref{eq:d-var AWpol}. The expression for $C_{\bbe}(\bx,\by)$ follows from the factor in front of the Askey-Wilson polynomial in Lemma \ref{lem:P=AW pol}, i.e.~
\[
C_{\bbe}(\bx,\by) = \prod_{j=1}^N (-ut)^{m_j} q^{-m_j(1+K_{j-1}+2M_{j-1})} q^{-m_j(m_j-1)} \frac{ (u x_{j+1}^{\pm 1}q^{1+K_j+2M_j}/t;q^2)_\infty }{(q^{2k_j};q^2)_{m_j} (u x_{j}^{\pm 1}q^{1+K_{j-1}+2M_{j-1}}/t;q^2)_\infty }.
\]
This simplifies to the expression given in the theorem by cancelling common factors.
\end{proof}
Next we derive properties of the functions $P_{\bbe}(\bx,\by)$. We start with orthogonality.

\begin{theorem} \label{thm:orthogonality}
The set $\{P_{\bbe}(\mdot,\by) \mid \by \in S \}$ is an orthogonal basis for $\mathcal H$, with orthogonality relations
\[
\left\langle P_{\bbe}(\mdot,\by), P_{\bbe}(\mdot,\by') \right\rangle_{\mathcal H} = \frac{\de_{\by,\by'}}{\widetilde w(\by)}.
\]
\end{theorem}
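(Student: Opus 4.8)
The plan is to mimic the proof of Proposition~\ref{prop:orthogonality N=1}, exploiting the fact that $P_{\bbe}(\mdot,\by)$ is, by definition, the image under the unitary $\La$ (composed with $M_u$) of the discrete eigenfunction $\widetilde v_{\by}$. Concretely, recall from the Definition that $P_{\bbe}(\bx,\by) = \langle M_u \widetilde v_{\by}, v_{\bx}\rangle_H = [\La(M_u \widetilde v_{\by})](\bx)$, where $\La = \La_{\bk,s}: H \to \mathcal H$ is the unitary operator of Proposition~\ref{prop:La(N)}(i) and $M_u$ is the (unitary, since $u \in \T$) multiplication operator on $H$.

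Given this, the computation is a one-line application of unitarity together with the dual orthogonality relations for the multivariate little $q$-Jacobi functions $\widetilde v_{\by}$ established in Lemma~\ref{lem:orth multivariate}(iii). Namely, for $\by,\by' \in S$,
\[
\left\langle P_{\bbe}(\mdot,\by), P_{\bbe}(\mdot,\by')\right\rangle_{\mathcal H}
= \left\langle \La M_u \widetilde v_{\by}, \La M_u \widetilde v_{\by'}\right\rangle_{\mathcal H}
= \left\langle M_u \widetilde v_{\by}, M_u \widetilde v_{\by'}\right\rangle_{H}
= \left\langle \widetilde v_{\by}, \widetilde v_{\by'}\right\rangle_{H}
= \frac{\de_{\by,\by'}}{\widetilde w(\by)},
\]
where the second equality uses that $\La$ is unitary, the third uses that $M_u$ is unitary (equivalently $|u|=1$), and the last is Lemma~\ref{lem:orth multivariate}(iii). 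For completeness: since $\{\widetilde v_{\by} \mid \by \in S\}$ is an orthogonal basis for $H$ by Lemma~\ref{lem:orth multivariate}(iii), and both $M_u$ and $\La$ are unitary, the image $\{\La M_u \widetilde v_{\by} \mid \by \in S\} = \{P_{\bbe}(\mdot,\by) \mid \by \in S\}$ is an orthogonal basis for $\mathcal H$.

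There is really no substantive obstacle here: all the analytic work (convergence of the defining sum for $P_{\bbe}$, unitarity of $\La$, completeness and orthogonality of $\widetilde v_{\by}$ in $H$) has already been done in Section~\ref{sect:multivariate ASC pol} and the appendix. The only point requiring a word of care is that $P_{\bbe}(\mdot,\by)$ genuinely lies in $\mathcal H$ and equals $\La M_u \widetilde v_{\by}$ as an element of $\mathcal H$ — this is exactly the content of the observation following the Definition, namely $P_{\bbe}(\bx,\by) = [\La(M_u\widetilde v_{\by})](\bx)$, which is legitimate because $M_u \widetilde v_{\by} \in H$ (it has the same norm as $\widetilde v_{\by}$) and $\La$ extends to a unitary on all of $H$. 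Thus the proof is essentially the transcription of the four-equality display above, and I would present it in exactly that form, just as was done in the univariate case.
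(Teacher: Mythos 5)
Your proposal is correct and follows exactly the paper's own argument: the paper likewise derives the orthogonality from $P_{\bbe}(\bx,\by)=[\La(M_u\widetilde v_{\by})](\bx)$, the unitarity of $\La$ and $M_u$, and the orthogonality relations for $\widetilde v_{\by}$ in Lemma \ref{lem:orth multivariate}(iii), in direct analogy with Proposition \ref{prop:orthogonality N=1}. Your four-equality display is precisely the intended computation, and your remark on completeness is also how the paper obtains the basis statement.
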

\begin{proof}
The proof is essentially the same as the proof of Proposition \ref{prop:orthogonality N=1}.
The orthogonality relations follow from $P_{\bbe}(\bx,\by) =  [\La (M_u \widetilde v_{\by}) ](\bx)$, the orthogonality relations of $\widetilde v_{\by}$ in Lemma \ref{lem:orth multivariate}, and unitarity of $\La$ and $M_u$.
\end{proof}
\begin{remark}
From Theorem \ref{thm:Pbeta=multivariate AWpol} it follows that the orthogonality relations from Theorem \ref{thm:orthogonality} are equivalent to orthogonality relations of the multivariate Askey-Wilson polynomials $P_N(\bm;\bx;\bal)$ with respect to the weight function
\[
\frac{w(x)}{(\al_1 x_1^{\pm 1}, \overline{\al_1} x_1^{\pm 1};q^2)_\infty},
\]
where $w$ is defined by \eqref{eq:def multivariate w}. Up to a multiplicative constant $w$ is equal to
\[
\prod_{j=1}^N \frac{ (x_j^{\pm 2};q^2)_\infty }{ ( \al_{j+1}x_{j+1}^{\pm 1} x_j^{\pm 1}/\al_j;q^2)_\infty},
\]
and $\overline{\al_1} = \al_1/\al_0^2$, so we recover the orthogonality relations of $P_N(\bm;\bx;\bal)$ with respect to the weight function \eqref{eq:weight multivariate AW} in base $q^2$.
\end{remark}

The multivariate Askey-Wilson polynomials $P_{\bbe}(\bx,\by)$ are simultaneous eigenfunctions of the coproducts of the twisted primitive elements, i.e., for $j=1,\ldots,N$,
\[
\begin{split}
[\rho(\widetilde \sfY_{t,u}^{(j)}) P_{\bbe}(\bullet,y)](\bx) &= [\La ( \pi(\widetilde \sfY_{t,u}^{(j)}) M_u \widetilde v_{\by})](\bx) = \la_{t,y_j} P_{\bbe}(\bx,\by),\\
[\widetilde \rho(\sfY_{s,u}^{(j)}) P_{\bbe}(\bx,\bullet)](\by) &= [\widetilde \La ( \pi(\sfY_{s,u}^{(j)}) M_u v_{\bx} )](\by) = \la_{x_{N-j+1},s} P_{\bbe}(\bx,\by).
\end{split}
\]
Our goal is now to write these eigenvalue equations as explicit $q$-difference equations.

\begin{theorem} \label{thm:difference equation}
For $j=1,\ldots,N$ $\rho(\widetilde \sfY_{t,u}^{(j)})$ is the $q$-difference operator given by
\[
\rho(\widetilde \sfY_{t,u}^{(j)}) = \sum_{\bnu \in \{-1,0,1\}^j } V_{\bnu,\bbe}^{(j)}(\bx) \mathcal T_{\bnu} -\left( \frac{(u+u^{-1})\mu_{x_{j+1}}}{q^{-1}+q}-\mu_t\right)\mathrm{Id}
\]
where,
\[
V_{\bnu,\bbe}^{(j)}(\bx) =
V_{\bnu}^{(j)}(\bx)\left( \frac{(u+u^{-1})\mu_{q^{2\nu_1}x_1}}{q^{-1}+q} + \frac{(qu-q^{-1}u^{-1})(\mu_{q^{2\nu_1}x_1}-\mu_{x_1})}{(q^{-1}-q)(q^{-1}+q)}-\mu_t\right),
\]
with $V_{\bnu}^{(j)}$ given in Proposition \ref{prop:K-2 mult difference operator}. In particular, the multivariate Askey-Wilson polynomials $P_{\bbe}(\bx,\by)$ satisfy
\[
\sum_{\bnu \in \{-1,0,1\}^j } V_{\bnu,\bbe}^{(j)}(\bx) [\mathcal T_{\bnu} P_{\bbe}(\bullet,\by)](\bx) -\left( \frac{(u+u^{-1})\mu_{x_{j+1}}}{q^{-1}+q}-\mu_t\right) P_{\bbe}(\bx,\by)= \la_{t,y_j}\, P_{\bbe}(\bx,\by).
\]
\end{theorem}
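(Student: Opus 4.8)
The plan is to lift the derivation of Proposition~\ref{prop:diffeq AW} to the $N$-fold tensor product, using in place of $\rho(K^{-2})$ the difference operator $L:=\rho(\sfK^{-2,(j)})=\sum_{\bnu\in\{-1,0,1\}^j}V_\bnu^{(j)}(\bx)\,\mathcal T_\bnu$ of Proposition~\ref{prop:K-2 mult difference operator}. Write $\mathsf{Z}:=\De^{j-1}(Y_s)\tensor 1^{\tensor(N-j)}\in\U_q^{\tensor N}$, and let $M_f$ denote multiplication by a function $f=f(\bx)$ on $\mathcal P$. Applying the $*$-homomorphism $X\mapsto\De^{j-1}(X)\tensor 1^{\tensor(N-j)}$ of $\U_q\to\U_q^{\tensor N}$ to the identity in Lemma~\ref{lem:Ys S T}(i) expresses $\widetilde\sfY_{t,u}^{(j)}$ through $\sfK^{-2,(j)}$, $\mathsf{Z}$ and scalars: with $\mathsf{S}^{(j)}:=\sfK^{-2,(j)}(\mathsf{Z}+\mu_s1)-\mu_s1$ and $\mathsf{T}^{(j)}:=(\sfK^{-2,(j)}\mathsf{Z}-\mathsf{Z}\sfK^{-2,(j)})/(q^{-1}-q)$,
\[
\widetilde\sfY_{t,u}^{(j)}=\frac{(u+u^{-1})\mathsf{S}^{(j)}+(qu-q^{-1}u^{-1})\mathsf{T}^{(j)}}{q+q^{-1}}+\mu_t\big(1-\sfK^{-2,(j)}\big).
\]
Since $\rho(\sfK^{-2,(j)})=L$ is known, everything reduces to computing $\rho(\mathsf{Z})$.

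The feature absent from the univariate case is that $\mathsf{Z}$ is \emph{not} diagonal on the basis $\{v_\bx\}$: it acts on the first $j$ tensor legs of $v_\bx$, whose chain of parameters ends at $x_{j+1}$ rather than at the fixed $s$, so a dynamical correction appears. Using $Y_s=Y_{x_{j+1}}+(\mu_s-\mu_{x_{j+1}})(K^2-1)$, the eigenvalue computation of Proposition~\ref{prop:multivariate eigenfunctions} applied to the first $j$ legs (which gives eigenvalue $\mu_{x_1}-\mu_{x_{j+1}}$ for $\De^{j-1}(Y_{x_{j+1}})$), and the scalar action $q^{\sum_{i\le j}(k_i+2n_i)}$ of $(K^2)^{\tensor j}$, then pairing against $v_\bx$ and invoking self-adjointness of $\mathsf{Z}$ and of $(K^2)^{\tensor j}\tensor 1^{\tensor(N-j)}$ (exactly as in the proofs of Lemma~\ref{lem:rho(K-2)} and Proposition~\ref{prop:diffeq AW}), one finds
\[
\rho(\mathsf{Z})=M_{\mu_{x_1}}-\mu_s\,\mathrm{Id}+\big(\mu_s\,\mathrm{Id}-M_{\mu_{x_{j+1}}}\big)L^{-1},\qquad L^{-1}=\rho\big((K^2)^{\tensor j}\tensor 1^{\tensor(N-j)}\big).
\]

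It remains to substitute this into $\rho(\mathsf{S}^{(j)})$ and $\rho(\mathsf{T}^{(j)})$. The key cancellation: $L$ shifts only $x_1,\dots,x_j$, hence commutes with $M_{\mu_{x_{j+1}}}$, so every $L^{-1}$ telescopes away and
\[
\rho(\mathsf{S}^{(j)})=L\,M_{\mu_{x_1}}-M_{\mu_{x_{j+1}}},\qquad \rho(\mathsf{T}^{(j)})=\tfrac{1}{q^{-1}-q}\big(L\,M_{\mu_{x_1}}-M_{\mu_{x_1}}L\big).
\]
Using $\mathcal T_\bnu M_{\mu_{x_1}}=M_{\mu_{q^{2\nu_1}x_1}}\mathcal T_\bnu$ turns these into $\sum_\bnu V_\bnu^{(j)}(\bx)\,\mu_{q^{2\nu_1}x_1}\,\mathcal T_\bnu$ and $\sum_\bnu V_\bnu^{(j)}(\bx)\,(\mu_{q^{2\nu_1}x_1}-\mu_{x_1})\,\mathcal T_\bnu$ respectively; inserting them together with $\rho(\sfK^{-2,(j)})=L$ into the displayed formula for $\widetilde\sfY_{t,u}^{(j)}$ and collecting the coefficient of each $\mathcal T_\bnu$ and of $\mathrm{Id}$ yields precisely the asserted operator, the coefficient of $V_\bnu^{(j)}(\bx)\mathcal T_\bnu$ being $V_{\bnu,\bbe}^{(j)}(\bx)/V_\bnu^{(j)}(\bx)$ and the leftover constant being $-\big((u+u^{-1})\mu_{x_{j+1}}/(q^{-1}+q)-\mu_t\big)$; this final collection of terms is verbatim the one in the proof of Proposition~\ref{prop:diffeq AW}, with $x,q^2x,\mu_s$ replaced by $x_1,q^{2\nu_1}x_1,\mu_{x_{j+1}}$. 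The $q$-difference equation for $P_{\bbe}(\bx,\by)$ then follows at once from $\rho(\widetilde\sfY_{t,u}^{(j)})P_{\bbe}(\bullet,\by)=\la_{t,y_j}P_{\bbe}(\bullet,\by)$, recorded just before the theorem. The only real obstacle is recognising and correctly bookkeeping the dynamical defect in $\rho(\mathsf{Z})$; once the $L^{-1}$'s cancel, what is left is the familiar univariate computation.
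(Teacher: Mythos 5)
Your proof is correct and follows essentially the same route as the paper: both reduce the problem via Lemma \ref{lem:Ys S T}(i) to the explicit difference operator $\rho(\sfK^{-2,(j)})$ of Proposition \ref{prop:K-2 mult difference operator} together with the fact that the coproduct of a twisted primitive element on the first $j$ legs, with endpoint parameter $x_{j+1}$, acts as multiplication by $\mu_{x_1}-\mu_{x_{j+1}}$. The only cosmetic difference is that the paper exploits the $s$-independence of $S$ and $T$ to substitute $s\to x_{j+1}$ at the outset, so that $\De^{j-1}(Y_{x_{j+1}}+\mu_{x_{j+1}}1)$ is immediately a multiplication operator, whereas you keep $Y_s$ and verify that the resulting $(\mu_s\,\mathrm{Id}-M_{\mu_{x_{j+1}}})L^{-1}$ corrections cancel --- the final collection of coefficients is identical.
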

\begin{proof}
First note that it follows from $\widetilde \sfY_{t,u}^{(j)} = \De^{j-1}(\widetilde Y_{t,u}) \tensor 1^{\tensor(N-j)}$ that $\rho(\widetilde \sfY_{t,u}^{(j)})$ acts only on the variables $x_1,\ldots,x_j$. So we may fix $x_{j+1},\ldots,x_N$, and consider only the action of $\rho_{\bk_j}(\De^{j-1}(\widetilde Y_{t,u}))$ on appropriate functions in $x_1,\ldots,x_j$.

Using Lemma \ref{lem:Ys S T} we can write $\De^{j-1}(\widetilde Y_{t,u})$ in terms of $\De^{j-1}(S)$ and $\De^{j-1}(T)$, where
\[
\begin{split}
\De^{j-1}(S) &= \De^{j-1}(K^{-2}) \De^{j-1}(Y_{x_{j+1}} + \mu_{x_{j+1}}1)-\mu_{x_{j+1}} \De^{j-1}(1),\\
\De^{j-1}(T) &=\frac{1}{(q-q^{-1})}\Big[\De^{j-1}(K^{-2})\De^{j-1}(Y_{x_{j+1}}+\mu_{x_{j+1}}1) - \De^{j-1}(Y_{x_{j+1}}+\mu_{x_{j+1}}1)\De^{j-1}(K^{-2}) \Big].
\end{split}
\]
Note that we use here that $S$ and $T$, which are defined in terms of $Y_s$, are actually independent of $s$. So we can conveniently replace $s$ by $x_{j+1}$. Now we use that $\rho_{\bk_j}(\De^{j-1}(Y_{x_{j+1}}+\mu_{x_{j+1}}1))$ is multiplication by $\mu_{x_1}$, and $\rho_{\bk_j}(\De^{j-1}(K^{-2}))$ is given as an explicit difference operator in Proposition \ref{prop:K-2 mult difference operator}. Recall here that $\sfK^{-2,(j)} = \De^{j-1}(K^{-2}) \tensor 1^{\tensor(N-j)}$. Then
\[
\rho_{\bk_j}(\De^{j-1}(S)) = \sum_{\bnu \in \{-1,0,1\}^j } \mu_{q^{2\nu_1}x_1} V_{\bnu}^{(j)}(\bx) \mathcal T_{\bnu} - \mu_{x_{j+1}}\mathrm{Id},
\]
and
\[
\rho_{\bk_j}(\De^{j-1}(T))=\frac{1}{q^{-1}-q} \sum_{\bnu \in \{-1,0,1\}^{j}}(\mu_{q^{2\nu_1} x_1}-\mu_{x_1})V_{\bnu}^{(j)}(\bx)\mathcal T_{\bnu} .
\]
This gives the following expression for $\rho_{\bk_j}(\De^{j-1}(\widetilde Y_{t,u}))$,
\[
\rho_{\bk_j}(\De^{j-1}(\widetilde Y_{t,u})) = \sum_{\bnu \in \{-1,0,1\}^j } V_{\bnu,\bbe}^{(j)}(\bx) \mathcal T_{\bnu} -\left( \frac{(u+u^{-1})\mu_{x_{j+1}}}{q^{-1}+q}-\mu_t\right)\mathrm{Id},
\]
with
\[
V_{\bnu,\bbe}^{(j)}(\bx) =
V_{\bnu}^{(j)}(\bx)\left( \frac{(u+u^{-1})\mu_{q^{2\nu_1}x_1}}{q^{-1}+q} + \frac{(qu-q^{-1}u^{-1})(\mu_{q^{2\nu_1}x_1}-\mu_{x_1})}{(q^{-1}-q)(q^{-1}+q)}-\mu_t\right). \qedhere
\]
\end{proof}
To compare the difference equations for the multivariate Askey-Wilson polynomials in Theorem \ref{thm:difference equation} with Iliev's difference equation \cite[Proposition 4.2]{Il}, let us write the coefficients $V_{\bnu,\bbe}$ in terms of the parameters $\al_0,\ldots,\al_{N+2}$ defined by \eqref{eq:parameter alpha}. We have
\[
V_{\bnu,\bbe}^{(j)} =  \frac{1}{q^{-1}-q}\prod_{i=0}^j V_{\bnu,\bbe,i}^{(j)}(\bx),
\]
where for $i=0$,
\[
V_{\bnu,\bbe,i}^{(j)} (\bx) =
\begin{cases}
-\dfrac{q \al_0}{\al_1} (1-\frac{\al_1}{\al_0^2}x_1^{\nu_1})(1-\al_1 q^{-2} x_1^{-\nu_1}), & \nu_1\neq 0,\\
\dfrac{(\al_0+\al_0^{-1})(x_1+x_1^{-1})}{q^{-1}+q}- \left(\dfrac{q \al_0}{\al_1} + \dfrac{\al_1}{q\al_0}\right), & \nu_1=0,
\end{cases}
\]
and for $i=1,\ldots,j$,
\[
V_{\bnu,\bbe,i}^{(j)} (\bx) =
\begin{cases} \dfrac{\al_{i}}{\al_{i+1}}\dfrac{(1-\frac{\al_{i+1}}{\al_i}x_{i+1} x_i^{\nu_i} ) ( 1-\frac{\al_{i+1}}{\al_i}\frac{x_i^{\nu_j}}{x_{i+1}} ) } {(1-x_i^{2\nu_j})(1-q^2x_i^{2\nu_j})}, & \nu_i \neq 0,\, \nu_{i+1}=0,\\ \\
\dfrac{q^2(q^{-1}+q)(\frac{q\al_{j}}{\al_{j+1}} + \frac{ \al_{j+1} }{q \al_j }) - q^2 (x_j+x_j^{-1})(x_{j+1} + x_{j+1}^{-1} )} {(1-q^2 x_j^2)(1-q^2x_j^{-2})}, & \nu_i=0, \, \nu_{i+1}=0,\\ \\
\dfrac{\al_{i}}{\al_{i+1}} \dfrac{(1-\frac{\al_{i+1}}{\al_i} x_{i+1}^{\nu_{i+1}} x_i^{\nu_i})  (1-\frac{\al_{i+1}}{\al_i} q^2 x_{i+1}^{\nu_{i+1}} x_i^{\nu_i}) }{(1-x_i^{2\nu_i})(1-q^2 x_i^{2\nu_i})}, & \nu_i\neq 0,\, \nu_{i+1}\neq 0,\\ \\
\dfrac{q^3 \al_i}{\al_{i+1}} \dfrac{(q^{-1}+q)(1-\frac{\al_{i+1}}{\al_i}x_{i+1}^{\nu_{i+1}} x_i) (1-\frac{\al_{i+1}}{\al_i}\frac{x_{i+1}^{\nu_{i+1}}}{x_i})}{(1-q^2 x_i^2)(1-q^2x_i^{-2})}, & \nu_i=0,\,\nu_{i+1} \neq 0,
\end{cases}
\]
with the assumption $\nu_{j+1}=0$. With these expressions and Theorem \ref{thm:Pbeta=multivariate AWpol} it is a straightforward calculation to show that the difference equations we obtained are equivalent to Iliev's difference equations.

An explicit expression for the difference operators $\widetilde \rho(\sfY_{s,u}^{(j)})$ is obtained in the same way as in Theorem \ref{thm:difference equation}. This gives explicit recurrence relations for the Askey-Wilson polynomials $P_{\bbe}(\bx,\by)$. We just state the result here.
\begin{theorem}
For $j=1,\ldots,N$ $\widetilde \rho(\sfY_{t,u}^{(j)})$ is the $q$-difference operator given by
\[
\widetilde \rho(\sfY_{s,u}^{(j)}) = \sum_{\bnu \in \{-1,0,1\}^j } \widetilde V_{\bnu,\bbe}^{(j)}(\by) \hat{\mathcal T}_{\bnu} +\left( \frac{(u+u^{-1})\mu_{y_{N-j}}}{q^{-1}+q}-\mu_s\right)\mathrm{Id}
\]
where
\[
\widetilde V_{\bnu,\bbe}^{(j)}(\by)= -\widetilde V_{\bnu}^{(j)}(\by)\left( \frac{(u+u^{-1})\mu_{q^{-2\nu_N}y_N}}{q^{-1}+q} + \frac{(qu^{-1}-q^{-1}u)(\mu_{q^{-2\nu_N}y_N}-\mu_{y_N})}{(q^{-1}-q)(q^{-1}+q)}-\mu_s\right),
\]
with $\widetilde V_{\bnu}^{(j)}(\by)$ given in Proposition \ref{prop:K2 mult difference operator}.
In particular, the multivariate Askey-Wilson polynomials $P_{\bbe}(\bx,\by)$ satisfy
\[
\sum_{\bnu \in \{-1,0,1\}^j } \widetilde V_{\bnu,\widetilde \bbe}^{(j)}(\by) [\hat{\mathcal T}_{\bnu} P_{\bbe}(\bx,\bullet)](\by) +\left( \frac{(u+u^{-1})\mu_{y_{N-j}}}{q^{-1}+q}-\mu_s\right) P_{\bbe}(\bx,\by)= \la_{x_{N-j+1},s}\, P_{\bbe}(\bx,\by),
\]
for $\by \in S$.
\end{theorem}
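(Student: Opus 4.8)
The plan is to prove this exactly as Theorem~\ref{thm:difference equation}, after applying the substitution $q\leftrightarrow q^{-1}$ and reversing the order of the tensor factors: by construction $\widetilde\rho$ is obtained from $\rho$ by this operation (cf.\ Lemma~\ref{lem:vq = tilde vq-1} and Lemma~\ref{lem:tilde rho(K2)}), and $\sfY_{s,u}^{(j)}$ is, up to relabelling, the image of $\widetilde\sfY_{t,u}^{(j)}$ under it.

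\textbf{Reduction.} Since $\sfY_{s,u}^{(j)} = 1^{\tensor(N-j)}\tensor\De^{j-1}(Y_{s,u})$, the operator $\widetilde\rho(\sfY_{s,u}^{(j)})$ acts only on the variables $y_{N-j+1},\dots,y_N$, with the boundary variable $y_{N-j}$ (equal to $t$ when $j=N$) entering merely as a parameter. Hence, after fixing $y_1,\dots,y_{N-j}$, it suffices to realise the action of $\De^{j-1}(Y_{s,u})$ on the last $j$ tensor factors, i.e.\ on products of the univariate factors $\widetilde v_{y_i,y_{i-1},k_i}$, $i=N-j+1,\dots,N$.

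\textbf{Decomposition and evaluation.} Apply Lemma~\ref{lem:Ys S T}(ii) to write $Y_{s,u}$ in terms of $\widetilde S$, $\widetilde T$ and $K^2$, and take $\De^{j-1}$. Because $\widetilde S$ and $\widetilde T$ are independent of the parameter used to define them, one may use $\widetilde S = K^2(\widetilde Y_{y_{N-j}}-\mu_{y_{N-j}}1)+\mu_{y_{N-j}}1$, so that $\De^{j-1}(\widetilde S) = \De^{j-1}(K^2)\,\De^{j-1}(\widetilde Y_{y_{N-j}}-\mu_{y_{N-j}}1)+\mu_{y_{N-j}}\De^{j-1}(1)$ and analogously for $\De^{j-1}(\widetilde T)$, with $y_{N-j}$ now the only parameter present. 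By the induction argument in the proof of Proposition~\ref{prop:multivariate eigenfunctions}, relabelled so as to build the chain $y_{N-j}\to y_{N-j+1}\to\cdots\to y_N$, the element $1^{\tensor(N-j)}\tensor\De^{j-1}(\widetilde Y_{y_{N-j}})$ acts on $\widetilde v_\by$ by multiplication by $\la_{y_{N-j},y_N}=\mu_{y_{N-j}}-\mu_{y_N}$; hence $\widetilde\rho$ of $1^{\tensor(N-j)}\tensor\De^{j-1}(\widetilde Y_{y_{N-j}}-\mu_{y_{N-j}}1)$ is multiplication by $-\mu_{y_N}$. Composing this with the explicit $q$-difference operator $\widetilde\rho(\sfK^{2,(j)})=\sum_\bnu\widetilde V^{(j)}_\bnu(\by)\hat{\mathcal T}_\bnu$ of Proposition~\ref{prop:K2 mult difference operator} — and keeping track of how $\hat{\mathcal T}_\bnu$ shifts the $y_N$ appearing inside $\mu_{y_N}$ — gives $\widetilde\rho(\De^{j-1}(\widetilde S))$ and $\widetilde\rho(\De^{j-1}(\widetilde T))$ as $q$-difference operators of the claimed shape. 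Reassembling them via Lemma~\ref{lem:Ys S T}(ii) and simplifying the resulting bracket (the same elementary computation as in the proof of Proposition~\ref{prop:diffeq in y AW}, now in base $q^{-1}$) identifies the coefficient of $\hat{\mathcal T}_\bnu$ with $\widetilde V^{(j)}_{\bnu,\bbe}(\by)$ and the coefficient of $\mathrm{Id}$ with $\tfrac{(u+u^{-1})\mu_{y_{N-j}}}{q^{-1}+q}-\mu_s$. Finally, applying this operator identity to $P_\bbe(\bx,\cdot)$ and invoking the eigenvalue equation $[\widetilde\rho(\sfY_{s,u}^{(j)})P_\bbe(\bx,\cdot)](\by)=\la_{x_{N-j+1},s}\,P_\bbe(\bx,\by)$ recorded before the theorem (itself a consequence of Proposition~\ref{prop:multivariate eigenfunctions} and unitarity of $\widetilde\La$ and $M_u$) yields the stated recurrence relation.

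\textbf{Main obstacle.} The only genuine work is bookkeeping: one must carefully handle the reversal of the tensor factors encoded by $\hat{\mathcal T}_\bnu$ (so that the flipped parameter tuple $\widetilde\bbe$ appears correctly in the final difference equation) and the sign changes forced by $q\leftrightarrow q^{-1}$, in particular $\mu_s\mapsto-\mu_s$, $\mathcal T_i\mapsto\mathcal T_i^{-1}$, and $qu-q^{-1}u^{-1}\mapsto -(qu^{-1}-q^{-1}u)$, in order to land precisely on the coefficients $\widetilde V^{(j)}_{\bnu,\bbe}$ as stated.
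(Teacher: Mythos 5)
Your proposal is correct and follows exactly the route the paper intends: the paper omits the proof, stating only that the result "is obtained in the same way as in Theorem \ref{thm:difference equation}," and your argument is precisely that parallel derivation — decomposing $Y_{s,u}$ via Lemma \ref{lem:Ys S T}(ii) with the parameter $t$ replaced by $y_{N-j}$, using the relabelled chain induction to get the multiplication action of $1^{\tensor(N-j)}\tensor\De^{j-1}(\widetilde Y_{y_{N-j}}-\mu_{y_{N-j}}1)$ by $-\mu_{y_N}$, composing with $\widetilde\rho(\sfK^{2,(j)})$ from Proposition \ref{prop:K2 mult difference operator}, and reassembling. The bookkeeping caveats you flag (reversed tensor factors, $q\leftrightarrow q^{-1}$ sign changes, and which component of $\bnu$ shifts $y_N$) are exactly the right things to watch.
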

Note that $\widetilde V_{\bnu,\bbe}^{(j)}(\by) = V_{\bnu,\widetilde \bbe}^{(j)}(\hat \by)$, where
$\widetilde \bbe = (t,s,u,k_N,\ldots,k_1,q^{-1})$.

\section{Appendix}

\subsection{Convergence of the sum for $P_{\be}(x,y)$}
The function $P_{\be}(x,y)$ is defined in \eqref{eq:def univariate AW} by
\begin{equation} \label{eq:sum appendix}
P_{\be}(x,y)  =\langle M_u \widetilde v_{y,t}, v_{x,s} \rangle_H =  \sum_{n =0}^\infty \om(n) v_{x,s}(n) \widetilde v_{y,t}(n) u^n,
\end{equation}
where $y= tq^{-k-2m} \in S$ and $x \in \T$.
The eigenfunctions $v_{x,s}(n)$ and $\widetilde v_{y,t}(n)$, see \eqref{eq:definition vxs} and \eqref{eq:definition tilde vxs},  are Al-Salam--Chihara polynomials in base $q^2$ and $q^{-2}$, respectively. Using the expressions for these polynomials as $q$-hypergeometric functions we show here that the sum for $P_\be(x,y)$ converges.

We start with $v_{x,s}(n)$. Applying transformation formulas \cite[(III.2)]{GR} and then \cite[(III.31),(III.1)]{GR} gives
\[
\begin{split}
v_{x,s}(n) &= \ga_n x^n(q^k/sx;q^2)_n \rphis{2}{1}{q^{-2n}, q^ksx}{q^{2-2n-k}sx}{q^2,\frac{sq^{2-k}}{x}} \\
&= \ga_n x^n \frac{(q^k/sx;q^2)_n (sq^k/x;q^2)_\infty}{ (sq^{2-k}/x;q^2)_\infty} \rphis{2}{1}{q^{2-k}sx, q^{2-2k-2n} }{q^{2-2n-k}sx}{q^2, \frac{sq^k}{x}} \\
& = \frac{\ga_nc(x) x^n}{(q^{2k+2n};q^2)_\infty} \rphis{2}{1}{q^ksx, q^kx/s}{q^2x^2}{q^2,q^{2+2n}} + \mathrm{idem}(x \leftrightarrow x^{-1})
\end{split}
\]
where
\[
\begin{split}
\ga_n &= \frac{q^{-n(k-1)/2}}{ (q^2;q^2)_n},\\
c(x) &= \frac{ (q^ks/x, q^k/sx;q^2)_\infty}{(1/x^2;q^2)_\infty}.
\end{split}
\]
So for $n \to \infty$
\begin{equation} \label{eq:asymptotics vxs}
v_{x,s}(n) \sim
C\,q^{-n(k-1)/2} \Big( c(x) x^n + c(x^{-1}) x^{-n} \Big), \qquad  x \in \T,
\end{equation}
where $C$ is a constant independent of $n$.

Next we consider $\widetilde v_{y,t}(n)$. This function can be written as
\[
\begin{split}
\widetilde v_{tq^{-k-2m},t}(n) = \widetilde \ga_n \widetilde c_m \rphis{2}{1}{q^{-2m},  q^{2m+2k}/t^2 }{q^2/t^2}{q^2,q^{2n+2}},
\end{split}
\]
with
\[
\begin{split}
\widetilde \ga_n &= t^{-n} q^{n(3-k)/2}\frac{(q^{2k};q^{2})_n }{ (q^{2};q^{2})_n },\\
\widetilde c_m & = (-1)^m t^{2m} q^{-m(m+1)} \frac{ (q^2/t^2;q^2)_m }{(q^{2k};q^2)_m }.
\end{split}
\]
Then for $n \to \infty$,
\begin{equation} \label{eq:asymptotics tilde vyt}
\widetilde v_{tq^{-k-2m},t}(n) \sim  C t^{-n} q^{n(3-k)/2},
\end{equation}
where $C$ is independent of $n$.

The weight function $\om$ satisfies
\[
\om(n) = q^{n(k-1)} \frac{(q^2;q^2)_n }{ (q^{2k};q^2)_n } = \mathcal O(q^{n(k-1)}), \qquad n \to \infty,
\]
Then from \eqref{eq:asymptotics vxs} and \eqref{eq:asymptotics tilde vyt} it follows that the summand in \eqref{eq:sum appendix} is of order $\mathcal O((q/t)^n)$, so that the sum \eqref{eq:sum appendix} converges absolutely (recall that $|t|\geq q^{-1}$ and $u \in \T$).

Furthermore, for $x=sq^{k+2m}$, $m \in \N$, we have $c(x)=0$, so that
\[
v_{x,s}(n) \sim C c(x^{-1}) x^{-n}, \qquad n \to \infty.
\]
We see that in this case the sum \eqref{eq:sum appendix} converges absolutely if $|t|>q^{k+2m+1}$.

\subsection{Evaluation of $P_\be(x,tq^{-k})$}
By inserting in \eqref{eq:sum appendix} explicit expressions for $v_{x,s}(n)$ and $\widetilde v_{tq^{-k},t}(n)$ in terms of $q$-hypergeometric functions and using $x \leftrightarrow x^{-1}$ invariance, we obtain
\begin{equation} \label{eq:P=sum}
\begin{split}
P_{\be}(x,tq^{-k}) & = \sum_{n=0}^\infty \left(\frac{qu}{x t} \right)^n\frac{(q^kx/s;q^2)_n}{(q^2;q^2)_n}
\rphis{2}{1}{q^{-2n},q^ks/x}{q^{2-2n-k}s/x}{q^2,sxq^{2-k}}.
\end{split}
\end{equation}
We write the $_2\varphi_1$ function as a sum and interchange the order of summation
\[
\sum_{n=0}^\infty \sum_{m=0}^n A_{n,m}= \sum_{m=0}^\infty \sum_{n=m}^\infty A_{n,m}.
\]
Then replace $n-m=l$ and use the identity
\[
\frac{(q^{-2l-2m};q^2)_m}{ (sq^{2-k-2l-2m}/x;q^2)_m} = \frac{(q^kx/s;q^2)_l (q^2;q^2)_{l+m}}{ (q^2;q^2)_l (q^kx/s;q^2)_{l+m}} \left( \frac{ xq^{k-2} }{s}\right)^m
\]
to find
\[
P_{\be}(x,tq^{-k}) = \sum_{m=0}^\infty \frac{ (q^k s/x;q^2)_m }{(q^2;q^2)_m } \left( \frac{qux}{t}\right)^m \sum_{l=0}^\infty \frac{ (q^kx/s;q^2)_l }{ (q^2;q^2)_l } \left( \frac{ qu}{xt}\right)^l.
\]
Recall that $s,u,x \in \T$ and $|t|\geq q^{-1}$, then we see that both sums converge. They can be evaluated using the $q$-binomial formula \cite[(II.3)]{GR}, leading to
\[
P_{\be}(x,tq^{-k}) = \frac{ (q^{k+1}us^{\pm 1}/t;q^2)_\infty }{ (qux^{\pm 1}/t;q^2)_\infty }.
\]

\subsection{Overview of various Hilbert spaces}
\*\\
In Sections \ref{sect:Al-Salam--Chihara} and \ref{sect:AW polynomials} we use the following Hilbert spaces of univariate complex-valued functions:
\begin{itemize}
\item $H=H_k$ is the representation space of $\pi_k$. It is the Hilbert space of functions on $\N$ with inner product
\[
\begin{split}
\langle f,g\rangle_H &= \sum_{n \in \N} f(n) \overline{g(n)} \om(n), \\
\om(n) &= \om_k(n)= q^{n(k-1)} \frac{ (q^2;q^2)_n }{(q^{2k};q^2)_n}.
\end{split}
\]

\item $\mathcal H=\mathcal H_{k,s}$ is the representation space of $\rho_{k,s}$. It is the Hilbert space of function on $\T$ which are invariant under $x \leftrightarrow x^{-1}$, with inner product
\[
\begin{split}
\langle f,g\rangle_{\mathcal H} &= \frac1{4\pi i} \int_\T f(x) \overline{g(x)} w(x) \frac{dx}{x}, \\
w(x)&=w_{k,s}(x) =\frac{ (q^2,q^{2k},x^{\pm 2};q^2)_\infty}{(q^k s^{\pm 1} x^{\pm 1};q^2)_\infty}.
\end{split}
\]
\item $\widetilde {\mathcal H} = \widetilde{\mathcal H}_{k,t}$ is the representation space of $\widetilde \rho_{k,t}$. It is the Hilbert space of functions on $S=S_{k,t,q} = \{ tq^{-k-2m} \mid m \in \N\}$, with inner product
\[
\begin{split}
\langle f,g\rangle_{\widetilde{\mathcal H}} &= \sum_{y \in S} f(y) \overline{g(y)}\, \widetilde w(y), \\
\widetilde w(y) &= \widetilde w_{k,t}(y) = \frac{ 1-q^{4m+2k} /t^2 }{1-q^{2k}/t^{2}} \frac{ (q^{2k}/t^2,q^{2k};q^2)_m (q^{2m+2}/t^2;q^2)_\infty }{(q^2;q^2)_m (q^{2k+2}/t^2;q^2)_\infty} t^{-2m}q^{2m^2},
\end{split}
\]
for $y = tq^{-k-2m},  m\in \N$.
\end{itemize}

\bigskip

In Sections \ref{sect:multivariate ASC pol} and \ref{sect:multivariate AW pol} we use the following Hilbert spaces of multivariate complex-valued functions:
\begin{itemize}
\item $H=H_{\bk}$ is the representation space of $\pi_\bk$. It is the Hilbert space of functions on $\N^N$ with inner product
\[
\begin{split}
\langle f,g\rangle_{H} &= \sum_{\bn \in \N^N} f(\bn) \overline{g(\bn)} \, \om(\bn), \\
\om(\bn) &= \om_{\bk}(\bn) = \prod_{j=1}^N \om_{k_j}(n_j) = \prod_{j=1}^N \frac{ (q^2;q^2)_{n_j} }{(q^{2k_j};q^2)_{n_j}} q^{n_j(k_j-1)}.
\end{split}
\]
\item $\mathcal H=\mathcal H_{\bk,s}$ is the representation space of $\rho_{\bk,s}$. It is the Hilbert space of functions on $\T^N$ which are invariant under $x_j \leftrightarrow x_j^{-1}$, $j=1,\ldots,N$, with inner product
\[
\begin{split}
\langle f,g \rangle_{\mathcal H} &= \int_{\T^N} f(\bx) \overline{g(\bx)} \, w(\bx)\, \frac{d\bx}{\bx}, \\
w(\bx)& = w_{\bk,s}(\bx)= \prod_{j=1}^N w_{k_j,x_{j+1}}(x_{j}) = \prod_{j=1}^N \frac{ (q^2,q^{2k_j},x_j^{\pm 2};q^2)_\infty}{(q^{k_j}x_{j+1}^{\pm 1} x_j^{\pm 1};q^2)_\infty}.
\end{split}
\]
\item $\widetilde{\mathcal H}= \widetilde{\mathcal H}_{\bk,t}$ is the representation space of $\widetilde \rho_{\bk,t}$. It is the Hilbert space of functions on \mbox{$S=S_{\bk,t,q} = \Big\{ tq^{-\Sigma(\bk)-2\Sigma(\bm)} \mid \bm \in \N^N \Big\}$}, with inner product
\[
\begin{split}
\langle f,g\rangle_{\widetilde{\mathcal H}} &= \sum_{\by \in S} f(\by) \overline{g(\by)}\, \widetilde w(\by),\\
\widetilde w(\by) &= \prod_{j=1}^N \widetilde w_{k_j,y_{j-1}}(y_j) =
 \prod_{j=1}^N \left(\frac{ q^{2K_{j-1}+4M_{j-1}+2m_j}}{t^2} \right)^{m_j}
 \frac{1-q^{2K_j+4M_j}/t^2 }{ 1-q^{2K_j+4M_{j-1}}/t^2} \\
  & \qquad \times \frac{ (q^{2K_{j}+4M_{j-1}}/t^2,q^{2k_j};q^2)_{m_j} (q^{2K_{j-1}+4M_{j-1}+2m_j+2}/t^2;q^2)_\infty }{(q^2;q^2)_{m_j} (q^{2K_{j}+4M_{j-1}+2}/t^2;q^2)_\infty},
\end{split}
\]
where $\by=tq^{-\Sigma(\bk)-2\Sigma(\bm)}$, $M_j=\Sigma(\bm)_j = \sum_{i=1}^j m_i$, $K_j = \Sigma(\bk)_j = \sum_{i=1}^j k_i$ and \mbox{$M_0=0=K_0$}.
\end{itemize}


\begin{thebibliography}{99}


\bibitem{AI} R.~Askey, M.E.H.~Ismail, \textit{Recurrence relations, continued fractions, and orthogonal polynomials}, Mem.~Amer.~Math.~Soc.~\textbf{49} (1984), no.~300.

\bibitem{AW}  R.~Askey, J.~Wilson, \textit{Some basic hypergeometric orthogonal polynomials that generalize Jacobi polynomials}, Mem.~Amer.~Math.~Soc.~\textbf{54} (1985), no. 319.

\bibitem{B} O.~Babelon, \textit{Universal exchange algebra for Bloch waves and Liouville theory}, Comm.~Math.~Phys.~\textbf{139} (1991), no.~3, 619--643.

\bibitem{BM} P.~Baseilhac, X.~Martin,  \textit{A bispectral $q$-hypergeometric basis for a class of quantum integrable models}
J.~Math.~Phys.~\textbf{59} (2018), no.~1.

\bibitem{BR} E.~Buffenoir, Ph.~Roche, \textit{Tensor product of principal unitary representations of quantum Lorentz group and Askey-Wilson polynomials}, J.~Math.~Phys.~\textbf{41} (2000), no.~11, 7715--7751.

\bibitem{DG} J.J.~Duistermaat, F.A.~Gr\"unbaum, \textit{Differential equations in the spectral parameter}, Comm.~Math.~Phys.~\textbf{103} (1986), no.~2, 177--240.

\bibitem{GR} G.~Gasper, M.~Rahman, \textit{Basic Hypergeometric Series}, 2nd ed., Encyclopedia of Mathematics and its Applications, \textbf{96}. Cambridge University Press, Cambridge, 2004.

\bibitem{GR05} G.~Gasper, M.~Rahman, \textit{Some systems of multivariable orthogonal Askey-Wilson polynomials}, Theory and applications of special functions, 209--219, Dev.~Math., \textbf{13}, Springer, New York, 2005.

\bibitem{GR07} G.~Gasper, M.~Rahman, \textit{Some systems of multivariable orthogonal q-Racah polynomials}, Ramanujan J.~\textbf{13} (2007), no.~ 1-3, 389--405.

\bibitem{GIV} V.X.~Genest, P.~Iliev, L.~Vinet, \textit{Coupling coefficients of $su_q(1,1)$ and multivariate q-Racah polynomials},  Nuclear Phys.~B \textbf{927} (2018), 97--123.

\bibitem{}  V.X.~Genest, S.~Post, L.~Vinet, \textit{An algebraic interpretation of the multivariate q-Krawtchouk polynomials}, Ramanujan J.~\textbf{43} (2017), no.~2, 415--445.

\bibitem{GI} J.S.~Geronimo, P.~Iliev, \textit{Multivariable Askey-Wilson function and bispectrality}, Ramanujan J.~24 (2011), no.~3, 273--287.

\bibitem{Gr} W.~Groenevelt, \textit{Wilson function transforms related to Racah coefficients}, Acta Appl.~Math.~\textbf{91} (2006), no.~2, 133--191.

\bibitem{Gr17} W.~Groenevelt, \textit{$3nj$-symbols and identities for $q$-Bessel functions}, Ramanujan J.~(2017). \textsf{doi.org/10.1007/s11139-017-9952-z}.

\bibitem{Il} P.~Iliev, \textit{Bispectral commuting difference operators for multivariable Askey-Wilson polynomials}, Trans.~Amer.~Math.~Soc.~\textbf{363} (2011), no.~3, 1577--1598.

\bibitem{I} M.E.H.~Ismail, \textit{Classical and quantum orthogonal polynomials in one variable}, Encyclopedia of Mathematics and its Applications, \textbf{98}. Cambridge University Press, Cambridge, 2005.


\bibitem{KLS}  R.~Koekoek, P.A.~Lesky, R.~Swarttouw, \textit{Hypergeometric orthogonal polynomials and their q-analogues}, Springer Monographs in Mathematics. Springer-Verlag, Berlin, 2010.

\bibitem{Koe} H.T.~Koelink, \textit{Askey-Wilson polynomials and the quantum $SU(2)$ group: survey and applications}, Acta Appl.~Math.~\textbf{44} (1996), no.~3, 295--352.

\bibitem{KvdJ} H.T.~Koelink, J.~Van Der Jeugt, \textit{Convolutions for orthogonal polynomials from Lie and quantum algebra representations}, SIAM J.~Math.~Anal.~\textbf{29} (1998), no.~3, 794--822.

\bibitem{K} T.H.~Koornwinder, \textit{Askey-Wilson polynomials for root systems of type $BC$}, in: Hypergeometric functions on domains of positivity, Jack polynomials, and applications (Tampa, FL, 1991), 189--204, Contemp.~Math., \textbf{138}, Amer.~Math.~Soc., Providence, RI, 1992.

\bibitem{K1} T.H.~Koornwinder, \textit{Askey-Wilson polynomials as zonal spherical functions on the SU(2) quantum group}, SIAM J.~Math.~Anal.~\textbf{24} (1993), no.~3, 795--813.

\bibitem{M} I.G.~Macdonald, \textit{Orthogonal polynomials associated with root systems}, S\'em.~Lothar.~Combin.\textbf{45} (2000/01), Art.~B45a.

\bibitem{NM} M.~Noumi, K.~Mimachi, \textit{Askey-Wilson polynomials and the quantum group $SU_q(2)$}, Proc.~Japan Acad.~Ser.~A Math.~Sci.~\textbf{66} (1990), no.~6, 146--149.

\bibitem{R} H.~Rosengren, \textit{A new quantum algebraic interpretation of the Askey-Wilson polynomials}, $q$-series from a contemporary perspective (South Hadley, MA, 1998), 371--394, Contemp.~Math., \textbf{254}, Amer.~Math.~Soc., Providence, RI, 2000.

\bibitem{R01} H.~Rosengren, \textit{Multivariable $q$-Hahn polynomials as coupling coefficients for quantum algebra representations}, Int.~J.~Math.~Math.~Sci.\textbf{28} (2001), no.~6, 331--358.

\bibitem{St} J.V.~Stokman, \textit{Vertex-IRF transformations, dynamical quantum groups and harmonic analysis}, Indag.~Math.~(N.S.) \textbf{14} (2003), no.~3-4, 545--570.

\bibitem{Tr} M.V.~Tratnik, \textit{Multivariable Wilson polynomials}, J.~Math.~Phys.~\textbf{30} (1989), no.~9, 2001--2011.

\end{thebibliography}
\end{document}